\newtheorem{lemma}{Lemma}[section]
\newtheorem{theorem}[lemma]{Theorem}
\newtheorem{corollary}[lemma]{Corollary}
\newtheorem{proposition}[lemma]{Proposition}
\theoremstyle{plain}
\newtheorem*{theoremA}{Theorem A}
\newtheorem*{theoremC}{Theorem C}
\newtheorem*{theoremD}{Theorem D}
\newtheorem*{corollaryB}{Corollary B}
\theoremstyle{definition}
\newtheorem{definition}[lemma]{Definition}
\newtheorem{remark}[lemma]{Remark}
\newtheorem{example}[lemma]{Example}
\renewcommand{\theequation}%
{\arabic{section}.\arabic{lemma}.\arabic{equation}}
\newcommand{\CC}{\ensuremath{\mathbb{C}}} 
\newcommand{\NN}{\ensuremath{\mathbb{N}}} 
\newcommand{\PP}{\ensuremath{\mathbb{P}}} 
\newcommand{\QQ}{\ensuremath{\mathbb{Q}}} 
\newcommand{\RR}{\ensuremath{\mathbb{R}}} 
\newcommand{\ZZ}{\ensuremath{\mathbb{Z}}} 
\newcommand{\sI}{\ensuremath{\kern -1pt \mathscr{I}\kern -2pt}} 
\newcommand{\sJ}{\ensuremath{\kern -2pt \mathscr{J}\kern -2pt}} 
\newcommand{\sO}{\ensuremath{\mathscr{O}}}
\newcommand{\shk}{\ensuremath{\mathcal{K}}}
\newcommand{\shc}{\ensuremath{\mathcal{C}}}
\newcommand{\shu}{\ensuremath{\mathcal{U}}}
\newcommand{\shs}{\ensuremath{\mathcal{S}}}
\newcommand{\shd}{\ensuremath{\mathcal{D}}}
\renewcommand{\geq}{\geqslant}
\renewcommand{\leq}{\leqslant}
\DeclareMathOperator{\mult}{mult}
\DeclareMathOperator{\Nef}{Nef}
\DeclareMathOperator{\Supp}{Supp}
\DeclareMathOperator{\ord}{ord}
\DeclareMathOperator{\Null}{Null}
\DeclareMathOperator{\Neg}{Neg}
\DeclareMathOperator{\vol}{vol}
\DeclareMathOperator{\length}{length}
\DeclareMathOperator{\Sing}{Sing}
\DeclareMathOperator{\Defect}{Defect}
\DeclareMathOperator{\Bbig}{Big}
\DeclareMathOperator{\intr}{int}
\newcommand{\equ}{\ensuremath{\,=\,}}
\newcommand{\dgeq}{\ensuremath{\,\geq\,}}
\newcommand{\deq}{\ensuremath{\stackrel{\textrm{def}}{=}}}
\newcommand{\dsubseteq}{\ensuremath{\,\subseteq\,}}
\newcommand{\st}[1]{\ensuremath{\left\{ #1 \right\}   }}
\begin{document}

\title[Local positivity]{Local positivity of linear series on surfaces}

\author[A.~K\" uronya]{Alex K\" uronya}
\author[V.~Lozovanu]{Victor Lozovanu}

\address{Alex K\"uronya, Budapest University of Technology and Economics, Department of Algebra, Egry J\'ozsef u. 1., H-111 Budapest, Hungary}
\email{{\tt alex.kuronya@math.bme.hu}}

\address{Victor Lozovanu, Universit\'a degli Studi di Milano--Bicocca, Dipartimento di Matematica e Applicazioni, Milano, Italy}
\email{{\tt victor.lozovanu@unimib.it}}

%


\maketitle


\section*{Introduction}

The main purpose of this paper is to understand local positivity of line bundles on surfaces, by making use of the theory of Newton--Okounkov bodies. More precisely, we find ampleness and nefness criteria in terms of convex geometry, and relate the information obtained this way to Seshadri-type 
invariants. 

For the past thirty-odd years there has been an increasing interest in describing positivity of line bundles around single points of varieties. Starting with the work of Demailly on Fujita's conjecture, where he introduced Seshadri constants, the topic developed quickly due to the work of 
Demailly,  Ein--Lazarsfeld, Nakamaye, and others \cites{D,EKL,ELMNP2,Nak1,Nak2}. In spite of all the effort, our understanding is still very limited, 
simple questions that remain unanswered to this day abound. 

We aim at translating existing invariants of local positivity to the language of Newton--Okounkov bodies, thus enriching them with extra structure. 
Our model is the relationship between Newton--Okounkov bodies and the volume of a divisor: we intend to replace numbers functioning as invariants 
by collections of convex bodies. We would like to emphasize the special nature of linear series on surfaces, most of the time we will employ 
elementary surface-specific tools, Zariski decomposition will play a crucial role for instance.

Originating in the work of Khovanskii from the late 70's, and Okounkov's construction \cite{Ok} in representation theory, Newton--Okounkov bodies are a not-quite-straightforward generalization of Newton polytopes to the setting of arbitrary projective varieties. For an $n$-dimensional 
projective variety $X$, a full flag of subvarieties $Y_\bullet$, and a big divisor $D$ on $X$, the Newton--Okounkov body $\Delta_{Y_\bullet}(D)\subseteq\RR^n$ is a convex set, encoding the set of all normalized valuation vectors coming from sections of 
multiples of $D$, where  the rank $n$ valuation of the function field of $X$ is determined by $Y_\bullet$. 

These convex bodies display  surprisingly good properties, and gave rise to a flurry of activities in projective geometry, combinatorics, and representation theory. 
For detailed descriptions and proofs the reader is kindly referred to the original sources \cites{KK,LM} and the recent review \cite{Bou}. 
The main idea is that  Newton--Okounkov bodies capture the vanishing behaviour of all sections of all multiples of $D$ at the same time. 

Our philosophical starting point is the work of Jow \cite{Jow} observing that the collection of all Newton--Okounkov bodies attached to a given line bundle serves as a universal numerical invariant: whenever all the Newton--Okounkov bodies agree for two divisors $D$ and $D'$, then in fact they must be numerically equivalent. Jow's result  leads to the expectation that one  should be able 
to read off numerical properties  of line bundles from the collection of attached Newton--Okounkov bodies. 

Once we focus on local positivity, the above principle modifies in the following manner: we expect that the local positivity of a Cartier divisor $D$ at a point $x\in X$ to be governed by the collection
\[
\shk(D;x) \deq  \st{ \Delta_{Y_\bullet}(D)\subseteq \RR^n_+\mid Y_\bullet \text{ is an admissible flag centered at $x$}} \ .
\]
It turns out that, whenever $X$ is a smooth projective surface, the above collection depends only on the choice of $x$ to a limited extent. In fact, it can be shown that $\shk(D;x)$ is independent of $x$ away from a countable union of proper subvarieties. 

We always work on smooth projective surfaces over  the complex number field, unless otherwise mentioned. Our first main result is a version  of the combinatorial characterization of torus-invariant ample/nef divisors on toric varieties valid on  all surfaces. We go one step further  and offer an analogous description in terms of infinitesimal Newton--Okounkov bodies. 

Let us  introduce some notation. For  positive real number $\lambda,\lambda', \xi >0$ we set 
\[
\Delta_{\lambda,\lambda'} \deq  \{(t,y)\in\RR^2_+ \ | \  \lambda't+\lambda y\leq \lambda\lambda'\}\ \ \ \text{and}\ \ \ 
\Delta_{\xi}^{-1} \deq \{(t,y)\in \RR^2_+ \ | \ 0\leq t\leq \xi, 0\leq y\leq t\}\ .
\] 
If $\lambda=\lambda'$, then denote by $\Delta_{\lambda}\deq \Delta_{\lambda,\lambda}$, which is the standard simplex of length $\lambda$.

We offer the following convex geometric characterization of  ampleness and nefness.

\begin{theoremA}\label{thm:A}(Nefness and ampleness criteria)
Let $X$ be a  smooth projective surface, $D$ a big $\RR$-divisor on $X$, and $\pi\colon X'\to X$ the blow-up of $x\in X$
with exceptional divisor $E$. Then 
\begin{enumerate}[i]
\item[(i)] $D$ is nef  $\Leftrightarrow$  for all $x\in X$ there exists a flag $(C,x)$ such that $(0,0)\in\Delta_{(C,x)}(D)$ \\
\hspace*{0.466in} $\Leftrightarrow$ for all $x\in X$ there exists $y\in E$ such that  $(0,0)\in\Delta_{(E,y)}(\pi^*(D))$.

\item[(ii)] $D$ is ample $\Leftrightarrow$ for all $x\in X$  there exists a flag $(C,x)$ and $\lambda >0$ such that $\Delta_{\lambda}\subseteq \Delta_{(C,x)}(D)$ \\
 \hspace*{0.679in} $\Leftrightarrow$ for all $x\in X$ there exists $y\in E$ and $\xi>0$ such that $\Delta_{\xi}^{-1}\subseteq\Delta_{(E,y)}(\pi^*(D))$.
\end{enumerate} 
\end{theoremA}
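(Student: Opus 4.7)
The strategy rests on the explicit description, due to Lazarsfeld--Mustata, of NO bodies on a surface in terms of Zariski decomposition. For a flag $(C,x)$ and big $\RR$-divisor $D$,
\[
\Delta_{(C,x)}(D) \equ \{(t,y) \in \RR^2_+ : \nu \leq t \leq \mu,\ \alpha(t) \leq y \leq \beta(t)\},
\]
where $\nu$ is the coefficient of $C$ in the negative part $N$ of $D$, $\mu$ is the pseudo-effective threshold, and $\alpha(t) = \ord_x(N_t|_C)$, $\beta(t) = \alpha(t) + P_t \cdot C$ come from the Zariski decomposition $D - tC = P_t + N_t$. For the infinitesimal flag $(E,y)$ on $\pi\colon X' \to X$ the same formulas apply with $\pi^*D$ in place of $D$ and $E$ in place of $C$; crucially, $\pi^*(P+N) = \pi^*P + \pi^*N$ is the Zariski decomposition of $\pi^*D$, so the $E$-coefficient of its negative part equals $\mult_x(N)$.

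Part (i) translates directly. The condition $(0,0) \in \Delta_{(C,x)}(D)$ amounts to $\nu = 0$ and $\alpha(0) = 0$, i.e., neither $C$ nor $x$ lies in $\Neg(D) = \Supp(N)$. Imposing this at every $x$ forces $N = 0$, i.e., $D$ nef; conversely, $D$ nef makes any admissible flag work. The infinitesimal version is identical: $(0,0) \in \Delta_{(E,y)}(\pi^*D)$ for some $y$ iff $\mult_x(N) = 0$, again equivalent to nefness when imposed for all $x$.

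For part (ii), the containment $\Delta_\lambda \subseteq \Delta_{(C,x)}(D)$ forces $\alpha(t) \equiv 0$ and $\beta(t) \geq \lambda - t$ on $[0, \lambda]$. If $D$ is ample, for each $x$ pick a smooth curve $C$ through $x$; since $D - tC$ remains ample for small $t$, the condition holds for some $\lambda > 0$. Conversely, (b) gives $D$ nef via (i), so $D^2 > 0$ follows from bigness, and by Nakai--Moishezon it suffices to prove $D \cdot C' > 0$ for every irreducible $C'$. Pick $x \in C'$ with flag $(C,x)$ and $\lambda > 0$ from (b): if $C = C'$, then $\beta(0) = D \cdot C' \geq \lambda > 0$; otherwise $C \cdot C' \geq 1$ (since $x \in C \cap C'$ and $C$ is smooth at $x$), and $D \cdot C' = 0$ would force $(D - tC) \cdot C' < 0$ for $t > 0$, placing $C'$ into $N_t$ and making $\alpha(t) > 0$, a contradiction. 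The infinitesimal version is governed by the identification $\sup\{\xi > 0 : \Delta_\xi^{-1} \subseteq \Delta_{(E,y)}(\pi^*D) \text{ for some } y\} = \varepsilon(D;x)$: for $t \leq \varepsilon(D;x)$, $\pi^*D - tE$ is nef, giving $\alpha(t) = 0$ and $\beta(t) = (\pi^*D - tE) \cdot E = t$; for $t > \varepsilon(D;x)$ some strict transform $\tilde C'$ with $\mult_x(C') > 0$ enters $N_t$, yielding $N_t \cdot E > 0$ independently of $y$ and hence $\beta(t) < t$. So (c) is equivalent to $D$ nef plus $\varepsilon(D;x) > 0$ everywhere, which is ampleness via Nakai--Moishezon using $D \cdot C' \geq \varepsilon(D;x)\mult_x(C') > 0$.

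The main technical obstacle is the converse of (ii), ruling out null curves. In the curve-flag setting the split into $C = C'$ versus $C \neq C'$ is essential, and the latter case requires confirming that the negative part of $D - tC$ contains $C'$ with positive coefficient for $t > 0$ and that this contribution survives restriction to $C$ at $x$. In the infinitesimal setting the crucial point is that the shortfall $\beta(t) < t$ for $t > \varepsilon(D;x)$ is governed by the global intersection number $N_t \cdot E$, so no choice of $y \in E$ can enlarge the inverted simplex beyond size $\varepsilon(D;x)$.
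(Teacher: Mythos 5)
Your proof is correct, and it differs from the paper's route in a way worth noting. The paper deduces Theorem~A from two pointwise results (Theorem~\ref{thm:main1} and Theorem~\ref{thm:triangle}), which characterize, flag by flag, when $x\notin\Neg(D)$ and when $x\notin\Null(D)$. The hard direction there -- showing that $x\notin\Null(D)$ forces a horizontal edge to emanate from the origin -- is handled via the local finiteness of Zariski chambers (Lemma~\ref{lem:kodaira} and \cite{BKS}) to guarantee $x\notin\Neg(D-tC)$ for small $t>0$. You sidestep this entirely: since Theorem~A quantifies over \emph{all} $x$, after establishing nefness via part~(i) you can invoke Nakai--Moishezon and then, for each test curve $C'$, use the simplex hypothesis at a single point $x\in C'$ to produce the contradiction $\alpha(t)>0$. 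This is genuinely shorter and more elementary, at the price of proving only the global statement rather than the pointwise refinement the paper records as Theorem~\ref{thm:main1}. Your infinitesimal argument similarly packages the identification of $\sup\{\xi : \Delta_\xi^{-1}\subseteq\Delta_{(E,y)}(\pi^*D)\}$ with $\epsilon(D;x)$ in the nef case -- which in the paper is Proposition~\ref{prop:bignef} (the nucleus of Theorem~D) -- and applies it to finish via Nakai--Moishezon; the key observation that the vertical slice length $P_t\cdot E=t-N_t\cdot E$ is independent of $y$ is exactly the paper's Remark~\ref{rem:vertical}. One small point in your favour: your claim that $\pi^*D=\pi^*P_D+\pi^*N_D$ is the Zariski decomposition with $E$-coefficient $\mult_x(N_D)$ holds in general (the matrix $\bigl(N_i\cdot N_j-m_im_j\bigr)$ together with the $E$ row/column stays negative definite), whereas the paper's Remark~\ref{rem:ZD of pullback} is stated only under $x\notin\Neg(D)$; you could have invoked the weaker statement since part~(i) runs through the same valuation-theoretic contradiction regardless, but the general form is worth remarking.
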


Theorem~A is a particular case of more general results. In Theorem~\ref{thm:main1} we prove the according criteria for a point $x\in X$ 
not to be contained either in the negative or null locus of $D$ in terms of Newton--Okounkov bodies defined on $X$. 
Furthermore, in Theorem~\ref{thm:triangle} we connect these loci to the shape of infinitesimal Newton--Okounkov bodies, 
defined on the blow-up $X'$.

As mentioned above, the Newton--Okounkov body of a big $\QQ$-divisor $D$ encodes how all the sections of all powers of $D$ vanish along a fixed flag.
Conversely, it is a very exciting problem to find out exactly which points in the plane are given by valuations of sections, whether these points 
lie in the interior or the boundary of the Newton--Okounkov body.
This is expressed by saying that a rational point of $\Delta_{Y_\bullet}(D)$ is 'valuative'. Finding valuative points in Newton--Okounkov bodies 
is a recurring theme of this article, some partial answers are summarized in the following corollary.

\begin{corollaryB}\label{cor:B}(Valuative points)
Let $X$ be a smooth projective surface, $D$ a big $\QQ$-divisor, $(C,x)$ a flag on $X$, and $\pi\colon X'\to X$ the blow-up of $X$ at $x$ 
with exceptional divisor $E$. Then
\begin{enumerate}
\item[(i)]  Any rational point in the interior of $\Delta_{(C,x)}(D)$ is valuative;
\item[(ii)] Suppose $\Delta_{\lambda,\lambda'}\subseteq \Delta_{(C,x)}(D)$ for some $\lambda,\lambda'>0$. Then any rational point on the horizontal segment $[0,\lambda)\times\{0\}$ and the vertical one $\{0\}\times [0,\lambda')$ is valuative;
\item[(iii)] Suppose that $\Delta^{-1}_{\xi} \dsubseteq \Delta_{(E,y_0)}(\pi^*(D))$
for some $\xi>0$ and $y_0\in E$. Then any rational point on the diagonal segment $\{(t,t)|0\leq t<\xi\}$ and on the horizontal segment  $[0, \xi)\times\{0\}$ is  valuative;
\end{enumerate} 
\end{corollaryB}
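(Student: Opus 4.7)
The approach is to combine the Lazarsfeld--Mustata slice description of surface Newton--Okounkov bodies via Zariski decomposition with cohomological restriction-extension arguments. Recall that for a big $\QQ$-divisor $D$ and flag $(C,x)$, if $D-tC = P_t + N_t$ denotes the Zariski decomposition, then the vertical slice of $\Delta_{(C,x)}(D)$ at height $t$ is the interval $[\alpha_D(t),\beta_D(t)]$ with $\alpha_D(t)=\ord_x(N_t|_C)$ and $\beta_D(t)-\alpha_D(t)= P_t\cdot C$.

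For part~(i), pick a rational interior point $(t,y)$ and choose $m$ divisible enough so that $mt$, $my$, $m\alpha_D(t)$ are integers, both $mP_t$ and $mN_t$ are Cartier, and the restriction map $H^0(X,mP_t)\to H^0(C, mP_t|_C)$ is surjective; the last condition follows from Kawamata--Viehweg vanishing applied to the nef-and-big divisor $mP_t-\KX-C$. Since $mP_t|_C$ has positive degree on the smooth curve $C$, we can pick a section of $mP_t|_C$ vanishing at $x$ to order exactly $m(y-\alpha_D(t))$; lift it to $\tau\in H^0(X,mP_t)$ and multiply by the canonical sections of $mN_t$ and of $mtC$ to obtain a section of $mD$ with valuation vector $(mt,my)$.

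Part~(ii) follows the same blueprint. The hypothesis $\Delta_{\lambda,\lambda'}\dsubseteq\Delta_{(C,x)}(D)$ forces $\alpha_D(t)=0$ for rational $t\in[0,\lambda)$, so no component of $N_t$ contributes at $x$ along $C$. For $(s,0)$ with $s<\lambda$, we apply the construction of (i) but now choose $\tau\in H^0(X,mP_s)$ whose restriction to $C$ is non-vanishing at $x$, which is possible because $mP_s|_C$ has positive degree; the vertical case $(0,s)$ is analogous with $t=0$ and a section of $mP_0|_C$ vanishing at $x$ to order exactly $ms$. For part~(iii), the same program runs on $X'$ with flag $(E,y_0)$ and divisor $\pi^*(D)$, using the Zariski decomposition $\pi^*(D)-tE=P_t+N_t$ on $X'$. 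The horizontal case is parallel to~(ii); the diagonal case $(t,t)$ with $t<\xi$ exploits that $E\cong\PP^1$: the line bundle $mP_t|_E$ has degree $\geq mt$ by the containment, and every degree-$mt$ line bundle on $\PP^1$ admits a section vanishing only at $y_0$ to order exactly $mt$. Lifting and multiplying as above yields a section of $m\pi^*(D)$ with valuation $(mt,mt)$.

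The principal obstacle is securing simultaneously the surjectivity of the restriction maps $H^0(X,mP_t)\to H^0(C,mP_t|_C)$ (and their blow-up analogues) together with the requirement that $C$ (resp.\ $E$) not be an irreducible component of $N_t$ for the relevant~$t$. The former is handled uniformly through Kawamata--Viehweg vanishing; the latter is enforced directly by the convex-geometric hypotheses in (ii)--(iii), which demand $\alpha_D(t)=0$ along the axis or diagonal segments under consideration.
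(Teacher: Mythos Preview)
Your approach is cohomological---lift prescribed sections from $C$ (or $E$) via surjectivity of restriction---whereas the paper argues almost entirely by convex geometry and reduction to the ample case. The strategies are genuinely different, but your implementation has a gap at the key step.

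The problem is the Kawamata--Viehweg claim. You assert that $mP_t-K_X-C$ is nef and big for $m\gg 0$. Bigness is fine, but nefness fails in general: if $\Gamma\subseteq\Null(P_t)$ is any curve with $(K_X+C)\cdot\Gamma>0$, then
\[
(mP_t-K_X-C)\cdot\Gamma \;=\; -\,(K_X+C)\cdot\Gamma \;<\;0
\]
for every $m$, since $P_t\cdot\Gamma=0$. Concretely, take $X$ a K3 surface, $\Gamma$ a $(-2)$-curve in $\Null(P_t)$, and $C$ meeting $\Gamma$ once: then $(K_X+C)\cdot\Gamma=1$. So Kawamata--Viehweg does not give $H^1(X,mP_t-C)=0$, and your surjectivity of $H^0(X,mP_t)\to H^0(C,mP_t|_C)$ is unjustified. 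The same issue recurs verbatim in parts~(ii) and~(iii). In~(i) this is especially serious, because an interior point $(t,y)$ gives no control over $\Null(P_t)$ near $x$ or $C$; there is no evident reduction to an ample divisor.

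The paper sidesteps all of this. For~(i) it uses only two facts: valuative points are dense in $\Delta_{(C,x)}(D)$, and every rational point on a segment joining two valuative points is again valuative (since $\nu_{(C,x)}$ is a valuation). For~(ii) the horizontal segment comes from the shift identity $\Delta_{(C,x)}(D)_{t\geq\delta}=(\delta,0)+\Delta_{(C,x)}(D-\delta C)$ together with Theorem~A, which yields $x\notin\Null(D-\delta C)$ and hence $x\notin\mathbf{B}(D-\delta C)$; the vertical segment is handled by a volume-increase argument for $D+\epsilon C$ combined with~(i). For~(iii) the paper first deduces $\Null(\pi^*(D)-tE)\cap E=\varnothing$ from the inverted-simplex hypothesis, then reduces to the ample case via Zariski decomposition and the refined Kodaira lemma (Lemma~\ref{lem:kodaira}); only there does it invoke Serre vanishing, where it is unproblematic. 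If you want to salvage your line of attack for~(ii) and~(iii), that reduction to ample is the missing ingredient; for~(i), the convex-geometric argument (or the restricted-volume machinery of \cite{ELMNP2}) seems unavoidable.
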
 

It is interesting to note that statement $(ii)$ can be obtained via  restricted volumes (see \cite{ELMNP2} for the basic theory). However, we present here a different proof for the surface case, that relies only on ideas of convex geometric nature and Theorem~A. As for statement $(iii)$, the rational points on the diagonal are valuative due to the fact that the exceptional divisor $E$ is a rational curve.

As a consequence of Theorem~A, all Newton--Okounkov bodies of an ample divisor $A$ are bound to contain a standard simplex $\Delta_\lambda$ of some size. By  choosing the curve in the flag to be very positive, one can make the size of the simplex as small as we wish. Therefore the exciting question to
ask is how large it can become. This leads to the definition of the largest simplex constant:
\[
 \lambda(A;x) \deq \sup_{(C,x)}\sup\st{\lambda>0\mid \Delta_{\lambda}\subseteq \Delta_{(C,x)}(A)}\ , 
\]
where the first supremum runs through all admissible flags centered at the point $x\in X$. 

As expected of asymptotic invariants, the largest simplex constant is homogeneous  in $A$, invariant with respect to numerical equivalence of 
divisors; moreover it  is a super-additive function of $A$. In Proposition~\ref{prop:simplexseshadri} we observe that 
$\epsilon(A;x) \dgeq \lambda(A;x)$, where the left-hand side denotes the classical Seshadri constant of $A$ at $x$. 
We illustrate in Remark~\ref{rmk:fake} that  $\lambda(A;x)\neq \epsilon(A;x)$ in general. 

Using Diophantine approximation we establish the uniform positivity of largest simplex constants assuming the rational polyhedrality of the nef cone. 

\begin{theoremC}(Positivity of the largest simplex constant)
Let  $X$ be  a smooth projective surface  with a rational polyhedral nef  cone. Then there exists a strictly positive constant $\lambda(X)>0$
such that  
\[
\epsilon(A;x)\dgeq \lambda(A;x) \dgeq \lambda(X), 
\]
for any $x\in X$ and any ample Cartier divisor $A$ on $X$.
\end{theoremC}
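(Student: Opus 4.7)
The inequality $\epsilon(A;x)\geq\lambda(A;x)$ is Proposition~\ref{prop:simplexseshadri}, so my task is to exhibit a universal constant $\lambda(X)>0$ with $\lambda(A;x)\geq\lambda(X)$ for every ample Cartier $A$ and every $x\in X$. The plan is twofold: (i) fix a reference very ample Cartier class $H$ on $X$ and produce a uniform lower bound for $\lambda(H;x)$ by choosing an explicit flag, then (ii) exploit rational polyhedrality of the nef cone to decompose any other ample Cartier divisor as a controlled fraction of $H$ plus a nef remainder, and conclude via the Minkowski additivity of Newton--Okounkov bodies.

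For step (i), I would take $H$ a very ample Cartier divisor on $X$, sufficiently positive that through every $x\in X$ there passes a smooth member $C\in|H|$; this can be arranged by replacing $H$ by a suitable multiple. With $C\equiv H$, the Zariski decomposition of $H-tC\equiv(1-t)H$ is trivial for $0\leq t\leq 1$, so the Lazarsfeld--Mustata description gives
\[
\Delta_{(C,x)}(H) \;=\; \bigl\{(t,y)\in\RR_+^2 \,:\, 0\leq t\leq 1,\ 0\leq y\leq (1-t)H^2\bigr\}.
\]
Since $H$ is ample integer Cartier, $H^2\geq 1$, so this region contains the standard simplex $\Delta_1$. Hence $\lambda(H;x)\geq 1$ uniformly in $x\in X$.

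For step (ii), rational polyhedrality of the nef cone passes by duality to the Mori cone $\overline{NE}(X)$, whose finitely many extremal rays are generated by primitive integer classes $C_1,\dots,C_r$. Set $M:=\max_i(H\cdot C_i)$, a positive integer independent of $A$. For any ample Cartier $A$, the intersection $A\cdot C_i$ is a positive integer, hence $(MA-H)\cdot C_i\geq 0$ for every $i$, and Kleiman's criterion gives that $MA-H$ is nef. Fix $\delta>0$ small; then $MA-(1-\delta)H=\delta H+(MA-H)$ is ample, so its Newton--Okounkov body at the flag $(C,x)$ contains the origin (any ample divisor has a section non-vanishing at $x$). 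Minkowski additivity of Newton--Okounkov bodies yields
\[
\Delta_{(C,x)}(MA) \;\supseteq\; (1-\delta)\Delta_{(C,x)}(H) \;+\; \Delta_{(C,x)}\bigl(MA-(1-\delta)H\bigr) \;\supseteq\; (1-\delta)\Delta_{(C,x)}(H),
\]
so by homogeneity $\Delta_{(C,x)}(A)\supseteq \Delta_{(1-\delta)/M}$. Letting $\delta\to 0$ gives $\lambda(A;x)\geq 1/M$, and $\lambda(X):=1/M$ works.

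The hard part will be the uniform positivity of $\lambda(H;x)$ in step (i); it rests on being able to choose a smooth reference curve $C\in|H|$ through every point of $X$, which forces $H$ to be positive enough that the linear system separates $0$- and $1$-jets, and on the trivial Zariski decomposition that makes $\Delta_{(C,x)}(H)$ explicitly a triangle. Diophantine approximation enters through the $\delta\to 0$ limit in step (ii), which is needed because the nef residue $MA-H$ may fail to be big and its Newton--Okounkov body may be lower-dimensional, so one must first perturb by an ample multiple $\delta H$ before invoking Minkowski additivity and then pass to the limit using closedness of the simplex containment.
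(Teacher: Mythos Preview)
Your proof is correct and gives a valid alternative to the paper's argument.

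The paper proves Theorem~C via Theorem~\ref{thm:polyhedral}: using Fujita vanishing and Castelnuovo--Mumford regularity (Lemma~\ref{lem:wilson}), one finds a divisor $B$ such that $B+P$ is base-point-free for every nef Cartier $P$; a Diophantine argument (integer points in a rational polyhedral cone stay a uniform distance from the boundary hyperplanes) then produces $m$ with $mA-B$ nef for every ample Cartier $A$, so $|mA|$ is base-point-free. Your route replaces $B$ by a fixed very ample $H$ chosen so that smooth members of $|H|$ pass through every point, and replaces the abstract distance argument by its dual formulation via extremal rays of $\overline{NE}(X)$: integrality of $A\cdot C_i$ forces $A\cdot C_i\geq 1$, whence $MA-H$ is nef for $M=\max_i(H\cdot C_i)$. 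These two ``Diophantine'' steps are the same idea viewed from opposite sides of Kleiman duality.

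Where the two approaches genuinely diverge is in the final step. The paper passes through base-point-freeness of $|mA|$; to extract a lower bound on $\lambda(A;x)$ one must still produce an admissible flag curve through $x$, which is not immediate from base-point-freeness alone (one really wants $|mA|$ very ample, a harmless strengthening of Lemma~\ref{lem:wilson} obtained by adding a further very ample class to $B$). You bypass this by invoking super-additivity of Newton--Okounkov bodies, $\Delta_{(C,x)}(D_1+D_2)\supseteq\Delta_{(C,x)}(D_1)+\Delta_{(C,x)}(D_2)$, together with the explicit triangular shape of $\Delta_{(C,x)}(H)$ from step~(i). This is cleaner and stays entirely within the Newton--Okounkov formalism; the trade-off is that step~(i) uses the Lazarsfeld--Musta\c{t}\u{a} description and is thus surface-specific, whereas the paper's Theorem~\ref{thm:polyhedral} is formulated for projective varieties of any dimension and yields the uniform Seshadri bound there as well.

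One small correction to your closing paragraph: the Diophantine content of your argument lies in the integrality step $A\cdot C_i\in\ZZ_{>0}$, not in the $\delta\to 0$ limit, which is merely the observation that $\Delta_{(C,x)}(A)$ is closed.
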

Even further, we show the existence of strictly positive lower bound on Seshadri constants for any smooth projective variety $X$ of any dimension, whenever the nef cone of $X$ is rational polyhedral.

By turning our attention to the collection of infinitesimal Newton--Okounkov bodies we can take a closer look at Seshadri constants. As it happens, infinitesimal Newton--Okounkov bodies seem to capture  the local positivity of divisors more precisely. Hence one can expect them to determine (moving) Seshadri constants as well. This is indeed the case, as we will immediately explain.  

Here again, the key geometric invariant is the largest 'inverted' standard simplex that fits inside an infinitesimal Newton--Okounkov body. With notation as above, if $D$ is a big $\RR$-divisor on $X$, $\pi\colon\tilde{X}\to X$ the blow-up of $X$ at $x$ with exceptional
divisor $E$, then we set 
\[
\xi(\pi^*(D);y) \ \deq \  \sup\{\xi \geq 0 \mid \Delta_{\xi}^{-1}\subseteq \Delta_{(E,y)}(\pi^*(D))\} \ .
\]
It is not hard to see that this invariant does not depend on the choice of the point $y\in E$, as seen in Lemma~\ref{lemma:function constant}. So, if we denote by $\xi(D;x)\deq \xi(\pi^*(D);y)$ for some $y\in E$, then the following theorem says that this invariant is actually the moving Seshadri constant of $D$ at point $x$. 
\begin{theoremD}(Characterization of moving Seshadri constants) 
Let $D$ be a big $\RR$-divisor on $X$. If $x\notin\Neg(D)$, then 
\[
\epsilon(||D||;x)  \equ \xi(D;x) \ .
\]
\end{theoremD}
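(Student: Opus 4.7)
The plan is to characterize both $\xi(D;x)$ and $\epsilon(\|D\|;x)$ via the same Zariski-decomposition data on the blow-up $\pi\colon\tilde X\to X$ at $x$ and then match the two expressions. For $t\in[0,\mu(\pi^*D,E))$, write $\pi^*D-tE=P_t+N_t$ for the Zariski decomposition on $\tilde X$. Using the standard surface slice formula for Newton--Okounkov bodies (\`a la Lazarsfeld--Mustata, as employed throughout this paper), for $y\in E$ general the slice of $\Delta_{(E,y)}(\pi^*D)$ at height $t$ is the interval $\bigl[\alpha(t),\,\alpha(t)+P_t\cdot E\bigr]$ with $\alpha(t)=\ord_y(N_t|_E)$, and one has $\alpha(t)=0$ exactly when $E\not\subseteq\Supp(N_t)$ (provided $y$ avoids the finitely many intersections of other components of $N_t$ with $E$). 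Hence the condition $\Delta^{-1}_\xi\subseteq\Delta_{(E,y)}(\pi^*D)$ is equivalent to requiring $E\not\subseteq\Supp(N_t)$ and $P_t\cdot E\geq t$ for every $t\in[0,\xi]$, so
\[
\xi(D;x) \equ \sup\bigl\{\xi\geq 0 \,\mid\, E\not\subseteq\Supp(N_t)\text{ and }P_t\cdot E\geq t \text{ for all }t\in[0,\xi]\bigr\}.
\]
The hypothesis $x\notin\Neg(D)$ guarantees that $E\not\subseteq\Supp(N_0)$ and $P_0\cdot E>0$, so this supremum is strictly positive.

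The inequality $\xi(D;x)\geq\epsilon(\|D\|;x)$ follows by approximation. For any rational $t<\epsilon(\|D\|;x)$, the standard characterization of the moving Seshadri constant provides an ample $\QQ$-divisor $A$ and an effective $\QQ$-divisor $F$ with $D\equiv A+F$, $x\notin\Supp(F)$, and $\epsilon(A;x)>t$. Then $\pi^*A-tE$ is ample, hence its Zariski decomposition is trivial, $P^A_t\cdot E=t$, and $\Delta^{-1}_t\subseteq\Delta_{(E,y)}(\pi^*A)$. Monotonicity of Newton--Okounkov bodies under adding the effective divisor $\pi^*F$ (which is disjoint from $E$ since $x\notin\Supp(F)$) gives $\Delta_{(E,y)}(\pi^*A)\subseteq\Delta_{(E,y)}(\pi^*D)$, and letting $t\nearrow\epsilon(\|D\|;x)$ completes this half.

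For the reverse inequality I would leverage Corollary B(iii). Given rational $t<\xi(D;x)$ and any $y\in E$, the diagonal point $(t,t)$ is a valuative point of $\Delta_{(E,y)}(\pi^*D)$, producing some $m\geq 1$ and $\sigma_y\in H^0(mD)$ with $\mult_x(\sigma_y)=mt$ whose leading term at $x$ singles out the tangent direction $y\in E\cong\PP(T_xX)$. Allowing $y$ to range over $E$ -- legitimate since $\xi(\pi^*D;y)$ is independent of $y$ by Lemma~\ref{lemma:function constant} -- these sections jointly separate $mt$-jets at $x$ in every tangent direction, which by the standard jet-separation description of the moving Seshadri constant forces $\epsilon(\|D\|;x)\geq t$.

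The principal technical obstacle lies in this last step: passing from the pointwise valuative-point data of Corollary B(iii) to the uniform jet-separation statement computing $\epsilon(\|D\|;x)$ requires a Noetherian argument to synchronize the integers $m$ across varying $y\in E$. An alternative approach, probably more in line with this paper's strategy, is to establish directly the surface-specific formula $\epsilon(\|D\|;x)=\sup\{t\geq 0\mid E\not\subseteq\Supp(N_s)\text{ and }P_s\cdot E\geq s \text{ for all } s\in[0,t)\}$ by a Nakamaye-style analysis of curves through $x$, which matches the displayed expression above for $\xi(D;x)$ and yields the theorem at once.
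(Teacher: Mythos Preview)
Your first paragraph correctly identifies the slice-wise description of $\Delta_{(E,y)}(\pi^*D)$, and this is essentially how the paper handles the big and nef case. (One small slip: $P_0\cdot E=\pi^*P_D\cdot E=0$, not $>0$; this is harmless since the $t=0$ condition is vacuous.) However, both inequalities in your argument are incomplete, and the paper's route is rather different from either completion you propose.

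For $\xi(D;x)\geq\epsilon(\|D\|;x)$ you invoke a decomposition $D\equiv A+F$ on $X$ itself, but the \emph{definition} of the moving Seshadri constant in the paper takes the supremum over decompositions $f^*D=A+E$ on arbitrary smooth birational models $f\colon X''\to X$. Asserting that the supremum is already realized by $f=\mathrm{id}$ is not a ``standard characterization''; it is essentially part of what must be proved. The paper handles this by a separate lemma establishing $\mathcal{K}'(D,x)=\mathcal{K}'(f^*D,f^{-1}(x))$ (birational invariance of the infinitesimal polygons under maps that are isomorphisms near $x$), after which one may run your monotonicity argument on $X''$.

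For $\xi(D;x)\leq\epsilon(\|D\|;x)$ neither sketch is carried through. The jet-separation route has exactly the synchronization problem you flag, compounded by the fact that the sections $\sigma_y$ produced by Corollary~B(iii) have leading form a pure power $(\ell_y)^{mt}$, so one must additionally argue that such powers span and then treat lower-order jets. The paper avoids all of this. It first settles the big and nef case directly: your displayed characterization, together with the observation that for $P$ big and nef the condition $\Neg(\pi^*P-tE)\cap E=\varnothing$ forces $\pi^*P-tE$ to be nef (any irreducible component of the negative part would have to intersect $E$), gives $\epsilon(P;x)=\xi(P;x)$. The general case is then reduced to this one by passing to the positive part via $\Delta_{(E,y)}(\pi^*D)=\Delta_{(E,y)}(\pi^*P_D)$ (valid since $x\notin\Neg(D)$), and approximating $P_D$ by ample classes $P_D-\tfrac{1}{k}E'$ with $E'$ effective and supported on $\Null(P_D)$, using a precise Kodaira-type lemma; when $x\notin\Null(D)$ this furnishes the required decomposition on $X$, while if $x\in\Null(D)\setminus\Neg(D)$ both sides of the theorem vanish.
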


As a further application of infinitesimal Newton--Okounkov bodies, we translate ideas of Nakamaye and Cascini into the language of convex geometry 
to provide a new proof of the Ein--Lazarsfeld lower bound on Seshadri constants at very general points (see Corollary~\ref{cor:lowerbound}). 

A few words about the organization of this paper. Section 1 hosts   a quick recap of Newton--Okounkov bodies and Zariski decomposition, 
here several small new observations have been added that we will use repeatedly later on. Section 2 is devoted to our main results on 
Newton--Okounkov bodies on surfaces, while Section 3 is given over to the treatment of  infinitesimal Newton--Okounkov bodies and 
their relation to moving Seshadri  constants. In Section 4 we present various applications of the material developed so far.

\noindent
\textbf{Acknowledgments.} We are grateful for helpful discussions to S\'ebastian Boucksom, Lawrence Ein, John Christian Ottem, Mihnea Popa, 
and Stefano Urbinati. Parts of this work were done while the authors attended the MFO workshop on Newton--Okounkov bodies, the Summer School in Geometry at University of Milano--Bicocca, and the RTG Workshop on Newton--Okounkov bodies at the University of Illinois at Chicago. We would like to thank the organizers of the events for these opportunities
(Megumi Harada, Kiumars Kaveh, Askold Khovanskii; Francesco Bastianelli, Roberto Paoletti; Izzet Coskun and Kevin Tucker). 
 
 Alex K\"uronya was partially supported by the DFG-Forschergruppe 790 ``Classification of Algebraic Surfaces and Compact Complex Manifolds'', 
by the DFG-Graduier\-ten\-kol\-leg 1821 ``Cohomological Methods in Geometry'', and by the OTKA grants 77476 and 81203 of the Hungarian Academy of 
Sciences.

\section{Notation and introductory remarks}
We introduce the notation that will be used throughout this paper, and  give a brief introduction to Zariski decomposition and to the construction 
of Newton--Okounkov polygons for the sake of the reader. In addition we include some remarks for the lack of a suitable reference  
that had  probably been known to  experts.

\subsection{Zariski decomposition}
Let $X$ be a smooth complex projective surface.  As proven in \cite{F} (see also \cite{B01}*{Theorem~14.14} or \cite{Z} for the case of $\QQ$-divisors),
every  pseudo-effective $\RR$-divisor $D$ on $X$ has a \textit{Zariski decomposition}, i.e. $D$ can be written  uniquely  as a sum 
\[
D \ = \ P_D \ + \ N_D.
\]
of $\RR$-divisors, such that $P_D$ is nef, $N_D$ is either zero or an effective divisor with negative definite intersection matrix, and $(P_D\cdot N_D)=0$. The divisor $P_D$ is called the \textit{positive part},  $N_D$ the \textit{negative part} of $D$. Note that $P_D$ and $N_D$ will be $\QQ$-divisors whenever $D$ is such. Furthermore, when $D$ is a $\QQ$-divisor, Zariski decomposition is an equality of divisors and not merely of numerical equivalence classes. 

A crucial property of Zariski decomposition is that the positive part carries all the sections, more precisely 
(see \cite{PAG}*{Proposition 2.3.21} for instance), assuming that $mD$, $mP_D$ and $mN_D$ are all integral, the natural inclusion map $H^0(X,\sO_X(mP_D))\rightarrow H^0(X,\sO_X(mD))$, 
 defined by the multiplication with the divisor $mN_D$, is an isomorphism.

Following  \cite{BKS}, one can associate to $D$ the loci
\[
\Null (D) \deq \bigcup_{P_D.E=0} E \ \textup{ and } \Neg(D) \deq \bigcup_{E\subseteq \textup{Supp}(N_D)} E\ ,
\]
where the unions are taken  over irreducible curves on $X$. The orthogonality property of Zariski decomposition yields $\Neg(D)\subseteq \Null(D)$. 

In higher dimensions, these correspond to the  augmented and restricted base loci of $D$ introduced in \cite{ELMNP1}. 
We do not rely on the higher dimensional definition of these loci, but it is nevertheless important to keep in mind that 
$\textup{\textbf{B}}_+(D) = \Null(D)$ and $\textup{\textbf{B}}_-(D) = \Neg(D)$ as observed in \cite{ELMNP1}. 
Vaguely speaking  $\Null(D)$ consists of those points where $D$ is locally not ample while  $\Neg(D)$ is the locus of points where $D$ is locally not nef.

In \cite{BKS} the main goal is to prove the variation of Zariski decomposition inside the big cone. 
Based on the description of  Zariski chambers given there, we prove a statement  that can be seen as a more precise version of  Kodaira's lemma.

\begin{lemma}\label{lem:kodaira}
Let $P$ be a big and nef $\RR$-divisor on $X$, and let  $\Null(P)=E_1\cup\ldots \cup E_r$. 
Then there exists a maximal-dimensional rational polyhedral cone $\shc\subseteq\RR^r_{\geq 0}$ such that 
$P + \alpha\cdot (E_1,\dots,E_r)^T$ is ample for all $\alpha\in -\intr  \shc$ of sufficiently small norm. 
\end{lemma}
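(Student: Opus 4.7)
The plan is to combine a Hodge-theoretic computation with the variation of Zariski decomposition from \cite{BKS}. Concretely, I would use negative definiteness of the intersection pairing on the span of $E_1,\ldots,E_r$ to write down explicitly a rational polyhedral cone of directions $\alpha$, and then invoke the local polyhedral structure of the nef cone near $P$ to upgrade nefness of $P + \alpha \cdot (E_1,\ldots,E_r)^T$ to ampleness.

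The key linear-algebraic input is that the matrix $M \deq (E_i \cdot E_j)_{i,j}$ is negative definite. Indeed $P^2 > 0$ because $P$ is big and nef, and the Hodge index theorem implies that the intersection form on $P^\perp \subseteq N^1(X)_\RR$ is negative definite; since every $E_i$ lies in $P^\perp$, this forces linear independence of the $E_i$ in $N^1(X)_\RR$ and strict negative definiteness of $M$. Because the $E_i$ are distinct irreducible curves, $E_i \cdot E_j \geq 0$ for $i \ne j$, so $-M$ is a Stieltjes matrix, and a standard computation (write $-M = sI - B$ with $B \geq 0$ entrywise and $s > \rho(B)$, then expand the geometric series $(-M)^{-1} = s^{-1}\sum_{k \geq 0}(B/s)^k$) gives $(-M)^{-1} \geq 0$ entrywise. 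I would then set
\[
  \shc \deq (-M)^{-1}\bigl(\RR^r_{\geq 0}\bigr),
\]
which is a maximal-dimensional rational polyhedral cone contained in $\RR^r_{\geq 0}$ by the Stieltjes fact. For $\alpha = -\beta$ with $\beta \in \intr \shc$ one has $-M\beta \in \RR^r_{>0}$, and the elementary computation
\[
  \bigl(P + \alpha \cdot (E_1,\ldots,E_r)^T\bigr) \cdot E_i \equ -\sum_j \beta_j(E_j \cdot E_i) \equ -(M\beta)_i \,>\, 0
\]
yields strict positivity of intersection with each $E_i$.

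The main obstacle is upgrading this strict positivity against the $E_i$, together with smallness of $\|\alpha\|$, to ampleness rather than mere nefness. A naive attempt via Nakai--Moishezon would require a uniform lower bound $P \cdot C \geq \varepsilon_0$ over all irreducible curves $C \notin \{E_1,\ldots,E_r\}$, which is not immediate on a general surface. I would instead invoke the local finiteness and polyhedrality of the Zariski chamber decomposition from \cite{BKS}: there exists an open neighborhood $U$ of $P$ in $N^1(X)_\RR$ such that every class in $U$ has a Zariski decomposition whose negative part is supported on $\{E_1,\ldots,E_r\}$, and such that the nef cone inside $U$ is cut out exactly by the inequalities $Q \cdot E_i \geq 0$. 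For $\|\alpha\|$ small enough that $P + \alpha\cdot(E_1,\ldots,E_r)^T$ lies in $U$, the strict inequalities produced above then place the divisor in the interior of the nef cone, that is, in the ample cone, completing the proof.
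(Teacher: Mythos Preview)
Your proposal is correct and follows essentially the same route as the paper's proof: both reduce ampleness, via the local Zariski-chamber structure of \cite{BKS} near $P$, to strict positivity against the $E_i$, and then produce the cone $\shc$ as the image of $\RR^r_{\geq 0}$ under the inverse of the intersection matrix. The only cosmetic differences are the order of presentation (the paper invokes \cite{BKS} first and does the linear algebra second, you do the reverse) and that you supply a self-contained Stieltjes/M-matrix argument for the sign pattern of $(-M)^{-1}$, whereas the paper cites \cite{BKS}*{Lemma~4.1} for the same fact.
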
 

\begin{remark}\label{rem:negativedefinite}
Under the assumptions of  Lemma~\ref{lem:kodaira}, it is an easy consequence of the Hodge index theorem that the intersection matrix of the curve 
$E_1,\ldots ,E_k$ is negative definite: take a non-trivial divisor $M=\sum_ia_iE_i$; notice that $(P\cdot M)=0$. Thus, by Hodge, we see that $M^2<0$,
since $M$ is not numerically trivial.  This last fact holds as the classes of the curves $E_1,\ldots ,E_k$ 
are linearly independent in $\textup{N}^1(X)_{\RR}$.
\end{remark}

\begin{proof}
The claim is immediate if $P$ is ample, hence we can assume  that $P\in\partial\Nef(X)\cap\Bbig(X)$. By \cite{BKS}*{Corollary 1.3}, 
there exists an open neighborhood $\shu$ of $P$ and the curves $C_1,\dots,C_s$ such that the boundary of
the Zariski chamber decomposition inside $\shu$ is given by the hyperplanes $C_i^\perp\subseteq \textup{N}^1(X)_\RR$. 

If $(P\cdot C_i)\neq 0$ for a curve $C_i$, then by shrinking $\shu$ if need be, we can arrange that $\shu$ lies on one side of $C_i^\perp$. Therefore, we can assume without loss of generality that 
\[
 \shu \ \cap  \ \textup{Amp}(X) \equ E_1^{>0} \ \cap \ \ldots \ \cap \  E_r^{>0} \ \cap \ \shu\ .
\]
By possibly shrinking $\shu$, we assume also that the self-intersection form is strictly positive on it. 

The Nakai--Moishezon criterion then says that a divisor $P-\sum_{i=1}^{r}a_iE_i\in \shu$ is ample if and only if 
\[
((P-\sum_{i=1}^{r}a_iE_i)\cdot E_j)\ >\ 0 \ \ \text{for every  $1\leq j\leq r$.}
\]
As $(P\cdot E_j)=0$, by assumption, for all $1\leq j\leq r$, then this is equivalent to 
\begin{equation}\label{equ:ampleness}
 \sum_{i=1}^{r}a_i(E_i\cdot E_j) > 0 \ \ \text{for all $1\leq j\leq r$.}
\end{equation}
We  are left with looking for a sufficient condition on $\alpha=(a_1,\dots,a_r)\in \RR_{\geq 0}^r$
such that (\ref{equ:ampleness}) holds; not surprisingly, this ends up being  a linear algebra question. 

Let $A$ be the intersection matrix of the curves $E_1,\dots,E_r$. This matrix is negative definite by Remark~\ref{rem:negativedefinite}. Then \cite{BKS}*{Lemma 4.1} shows that $A^{-1}$ is a non-singular matrix with non-positive entries. In this notation, the statement (\ref{equ:ampleness}) is then equivalent to ask that $A\cdot (a_1,\dots,a_r)^T$ has only strictly positive coefficients. 

Let $e_1,\dots,e_r$ be the standard basis of $\RR^r$ and $v_1\deq A^{-1}e_1,\dots,v_r\deq A^{-1}e_r$. We claim that the cone $\shc$ spanned by $v_1,\dots,v_r$ has the required property. Indeed, since the elements of $A^{-1}$ are all non-positive, then $-\shc\subseteq \RR^r_{\geq 0}$. Thus, every $v\in -\intr\shc$ has only positive coefficients. Furthermore, if $v=\sum_{i=1}^{r}a_iv_i\in \intr\shc$, then $Av=\sum_{i=1}^{r}a_iA(A^{-1}e_i)=\sum_{i=1}^{r}a_ie_i>0$, hence 
(\ref{equ:ampleness}) is satisfied. 
\end{proof}

\begin{remark}\label{rem:ZD of pullback}
Let $x\in X$ be a point and $\pi:X'\rightarrow X$ be the blow-up of $X$ at $x$. Suppose $D$ is a pseudo-effective $\RR$-divisor on $X$ and $D=P_D+N_D$ its Zariski decomposition. If $x\notin \Neg(D)$, then $\pi^*D = \pi^*P_D + \pi^*N_D$ is the Zariski decomposition of $\pi^*D$. To see this, it suffices, by uniqueness of Zariski decomposition, to check that the right-hand side has 
the right properties. So, $\pi^*P_D$ remains nef, and $(\pi^*P_D\cdot\pi^*N_D)=(P_D\cdot N_D)=0$. Since $x\notin \Neg(D)$, then $\pi^*N_D$ equals the strict transform of $N_D$, and the respective intersection
matrices agree. 
\end{remark}

\subsection{Newton--Okounkov polygons} 
For the general theory of Newton--Okounkov bodies the reader is kindly referred to \cite{KK} and \cite{LM} or the excellent expository work \cite{Bou}. Here we only summarize some surface-specific facts. 

As before, let $X$ be a smooth projective surface, and $D$ be a big $\QQ$-divisor on $X$. We say that the pair $(C,x)$ is an \textit{admissible flag} on $X$ if $C\subseteq X$ is an irreducible curve and $x\in C$ is a smooth point.  Then the \textit{Newton--Okounkov body} associated to this date is defined via  
\[
\Delta_{(C,x)}(D) \deq  \overline{\nu_{(C,x)}\big(\{D' \ | \ D'\sim_{\QQ}D \textup{ effective }\QQ-\textup{divisor}\}\big)} \ ,
\]
where the rank-two valuation $\nu_{(C,x)}(D')=(\nu_1(D'),\nu_2(D'))$ is given by 
\[
\nu_1(D') \equ \ord_C(D')\ \  \textup{ and }\ \ \nu_2(D') \equ \ord_x((D'-\nu_1(D')C)|_C)\ .
\]
Making use of  \cite{LM}*{Proposition~4.1},  one can replace $\QQ$-linear equivalence by  numerical equivalence (denoted by $\equiv$). If $D$ is a big $\RR$-divisor, one can also associate the Newton--Okounkov body $\Delta_{(C,x)}(D)$, where $\QQ$-linear equivalence is replaced by numerical equivalence in the definition.

Drawing on Zariski decomposition on surfaces, \cite{LM}*{Theorem 6.4} gives a practical  description of Newton--Okounkov bodies in the surface case. Following \cite{LM}*{Section 6}, let $\nu$ be the coefficient of $C$ in the negative part $N(D)$ and set
\[ 
\mu \ = \ \mu (D;C) \deq \sup\{ \ t>0 \ | \ D-tC \textup{ is big }\}\  . 
\]
Whenever  there is no risk of confusion we will write $\mu(D)$. 

For any $t\in [\nu ,\mu ]$ we set $D_t \deq D-tC$. Let  $D_t=P_t+N_t$ be the  Zariski decomposition of $D_t$. 
Consider the  functions $\alpha ,\beta \colon [\nu ,\mu ]\rightarrow \RR_+$ defined as follows
\[ 
\alpha(t) \deq \ord_x(N_t|_C)\ , \mbox{ }\mbox{ }\beta(t) \deq \ord_x(N_t|_C)+
P_t\cdot C\ .
\] 
Then Lazarsfeld and Musta\c t\u a show that the Newton--Okounkov body is described as follows.
\begin{theorem}[Lazarsfeld--Musta\c t\u a, \cite{LM}*{Theorem 6.4}] Let $D$ be a big $\RR$-divisor, and $(C,x)$ an admissible flag on a smooth projective surface. Then
\[ 
\Delta_{(C,x)} (D) \equ \{ (t,y)\in \RR^2_+ \ | \ \nu\leq t\leq \mu, \alpha(t)\leq y\leq \beta(t)\}\ . 
\]
\end{theorem}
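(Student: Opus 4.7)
The plan is to compute the Newton--Okounkov body slice-by-slice in the first coordinate, showing that the fiber over each rational $t\in[\nu,\mu]$ is exactly the interval $[\alpha(t),\beta(t)]$. By definition, the body is the closure of $\nu_{(C,x)}$-images of effective $\QQ$-divisors numerically equivalent to $D$. A point with first coordinate $t$ comes from writing such a divisor as $D' \equiv tC + E$ with $E\geq 0$ not containing $C$ in its support, i.e.\ from effective representatives of $D_t=D-tC$. The effectivity of $D_t$ requires $t \leq \mu$ by definition of $\mu$, and one sees $t\geq \nu$ because every effective divisor $\equiv D$ has $C$-coefficient at least $\nu$ (the $C$-coefficient of $N_D$). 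Thus the projection of $\Delta_{(C,x)}(D)$ to the first axis is contained in $[\nu,\mu]$, and it remains to compute the fibers.

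For a fixed rational $t\in[\nu,\mu]$ choose $m$ divisible enough so that $mD_t$, $mP_t$, and $mN_t$ are all integral. The key structural input is that multiplication by the section cutting out $mN_t$ induces an isomorphism
\[
H^0(X,\sO_X(mP_t)) \ \stackrel{\sim}{\longrightarrow}\ H^0(X,\sO_X(mD_t))\ ,
\]
so any effective $E\sim mD_t$ decomposes as $E=mN_t+F$ with $F\sim mP_t$ effective. Provided $F$ does not contain $C$, we obtain
\[
\frac{1}{m}\ord_x(E|_C) \ = \ \ord_x(N_t|_C) \ + \ \frac{1}{m}\ord_x(F|_C) \ = \ \alpha(t) \ + \ \frac{1}{m}\ord_x(F|_C)\ ,
\]
and since $F|_C$ is an effective divisor on $C$ of degree $m(P_t\cdot C)$, this quantity lies in $[\alpha(t),\alpha(t)+P_t\cdot C]=[\alpha(t),\beta(t)]$. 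This immediately yields the inclusion $\subseteq$.

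For the reverse inclusion I would show that every rational point $(t,y)$ with $\nu<t<\mu$ and $\alpha(t)<y<\beta(t)$ is a valuation vector in the limit, from which the closed interval is recovered by taking closures. This reduces to the asymptotic assertion that the restriction map
\[
H^0(X,\sO_X(mP_t)) \ \longrightarrow\ H^0(C,\sO_C(mP_t|_C))
\]
has image whose vanishing orders at $x$ fill out every integer from $0$ up to $m(P_t\cdot C)$, for all sufficiently divisible $m$. This is where the main technical work sits: one applies the exact sequence associated with $0\to \sO_X(mP_t-C)\to \sO_X(mP_t)\to \sO_C(mP_t|_C)\to 0$, using nefness (and, for interior $t$, bigness) of $P_t$ together with an asymptotic vanishing for $H^1(X,\sO_X(mP_t-C))$ coming from Fujita vanishing or Kawamata--Viehweg after small perturbation via Lemma~\ref{lem:kodaira}, to conclude asymptotic surjectivity. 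Combined with the evident fact that any vanishing order $0\leq k\leq m(P_t\cdot C)$ on $C$ is realized by a section of $\sO_C(mP_t|_C)$, this produces valuation vectors at every rational point of the strip interior, and the boundary values $\alpha(t)$ and $\beta(t)$ are obtained by passing to the closure. I expect the asymptotic surjectivity to be the main obstacle, since it fails for individual $m$ and must be handled by the limit procedure, with the boundary cases $t\in\{\nu,\mu\}$ also requiring a separate continuity argument in $t$.
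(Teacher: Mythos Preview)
The paper does not prove this statement: it is quoted from \cite{LM}*{Theorem~6.4} as a known result in Section~1.2, and no argument is supplied. There is therefore nothing in the paper to compare your proposal against.

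For what it is worth, your slice-by-slice strategy via Zariski decomposition is essentially the original Lazarsfeld--Musta\c t\u a argument. One genuine slip: the ``evident fact'' that every integer $0\leq k\leq m(P_t\cdot C)$ occurs as $\ord_x$ of some section of $\sO_C(mP_t|_C)$ is false when $C$ has positive genus --- Riemann--Roch gives only $m(P_t\cdot C)-g(C)+1$ distinct vanishing orders, and the paper itself points out in Remark~\ref{rem:vanishing} that the top value need not be attained on a non-rational curve. What you actually need, and what is true, is that the \emph{normalized} vanishing orders become dense in $[0,P_t\cdot C]$ as $m\to\infty$; this suffices for the closure argument you already set up. With that correction, together with the implicit check that $C\nsubseteq\Supp(N_t)$ for $t\in[\nu,\mu]$ (so that your decomposition $E=mN_t+F$ really computes the slice at first coordinate exactly $t$), the outline is sound.
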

\begin{remark}
The Newton--Okounkov body  $\Delta_{(C,x)}(D)\subseteq \RR^2_+$ has been shown to be a polygon in \cite{KLM}*{Section 2}. The results of  \cite{KLM} reveal further properties of  $\Delta_{(C,x)}(D)$. The function $t\longmapsto N_t$ is increasing on the interval $[\nu,\mu]\subseteq \RR$, i.e. for any $\nu\leq t_1\leq t_2\leq \mu$ the difference $N_{t_2}-N_{t_1}$ is an effective divisor. This implies that a vertex of  $\Delta_{(C,x)}(D)$ may only occur for those  $t\in [\nu,\mu]$, where a new curve appears in $\Neg(D_t)$. 
\end{remark}
\begin{remark}
It was observed in \cite{BKS}*{Proposition 1.14} that Zariski decomposition is continuous inside the big cone but not in general when the 
limiting divisor class is only pseudo-effective. It turns out that there exists another important situation where continuity holds. 

Let $C$ be an irreducible curve on $X$ and as before let $\mu(D)=\mu(D;C)$. From  \cite{KLM}*{Proposition 2.1} 
and the ideas of the proof of Proposition~1.14 from \cite{BKS}, it follows that 
\[
N_{D-tC}\to N_{D-\mu(D)C}\ \ \text{ and }\ \ P_{D-tC}\to P_{D-\mu(D)C}
\]
whenever $t\to \mu(D)$. If $x\in C$ is a smooth point, then this says in particular that the segment 
$\Delta_{(C,x)}(D)\cap \big(\mu(D)\times \RR\big)$ is computed from actual divisors on $X$, i.e. $P_{\mu(D)}$ and $N_{\mu(D)}$. 
\end{remark}
\begin{remark}\label{rem:shift}
The above description of Newton--Okounkov polygons gives rise to the equality 
\[
\Delta_{(C,x)}(D)_{t\geq t_0} \equ \Delta_{(C,x)}(D-t_0C) \ + \ (t_0,0) \ .
\]
for any $t_0\in\RR$. In  \cite{LM}*{Theorem 4.24}, this is proved  under the additional assumption that $C\nsubseteq \Supp(\Null(D))$.
As it turns out, this condition is not  necessary.
\end{remark}

\begin{remark}\label{rem:definition}
Going back to the initial definition of Newton--Okounkov polygons, one remarks that the valuation map can be seen through local intersection numbers. Let $D$ be a big $\QQ$-divisor, $(C,x)$ an admissible flag, and $D'\sim_{\QQ} D$ an effective divisor. Note that $C\nsubseteq \textup{Supp}(D'-\nu_1(D')C)$, thus 
\[
\nu_2(D') \equ \big((D'-\nu_1(D)C). C\big)_x\ ,
\]
where the right-hand side  is  the local intersection number of the effective $\QQ$-divisor $D-\nu_1(D)C$ and the curve $C$ at the point $x$.  
As it is well known, one has 
\[
(C'.C'') \equ \sum_{P\in X}(C'.C'')_P\ ,
\]
for distinct irreducible curves  $C',C''\subseteq X$. In particular, one has the inequality 
\[ 
\big((D-\nu_1(D')C).C\big) \ \geq \ \nu_2(D')
\]
as  $D\equiv D'$. These ideas give rise to a somewhat different construction of  Newton--Okounkov polygons in the surface case that is based on  local intersection numbers.
\end{remark}

\begin{remark}\label{rem:vertical}
For a Newton--Okounkov polygon $\Delta_{(C,x)}(D)$, the lengths of the vertical slices are independent of the point $x$. 
To see this, denote
\[
\Delta_{(C,x)}(D)_{t=\xi} \deq \Delta_{(C,x)}(D) \ \bigcap \ \{t=\xi\}\times\RR 
\]
for any $\xi\in [\nu,\mu]$. Then 
\[
\length(\Delta_{(C,x)}(D)_{t=\xi})  \equ \beta(\xi)-\alpha(\xi) \equ (P_{D-\xi C}.\cdot C)\ ,
\]
hence the observation. 
\end{remark}
The following lemma helps reduce the problem of computing the Newton--Okounkov polygon of a divisor to the computation of the polygon 
of its positive part. This is implicitly contained in \cite{LSS} for $\QQ$-divisors; here  we give  a proof of the $\RR$-divisor case.

\begin{lemma}\label{lem:positive}
Let $D$ be a big $\RR$-divisor on a smooth projective surface $X$ and  $(C,x)$ an admissible flag on $X$. If $x\notin \Neg(D)$, 
then 
\[
\Delta_{(C,x)}(D)=\Delta_{(C,x)}(P_D) \ .
\]
\end{lemma}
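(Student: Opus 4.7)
My plan is to use the Lazarsfeld--Musta\c t\u a description of Newton--Okounkov polygons recalled above, and show that the defining data $(\nu,\mu,\alpha(\cdot),\beta(\cdot))$ of $\Delta_{(C,x)}(D)$ coincide with those of $\Delta_{(C,x)}(P_D)$. Since $x\notin\Neg(D)=\Supp(N_D)$ and $\Supp(N_D)$ is closed, the curve $C$ cannot be contained in $\Supp(N_D)$ (as $x\in C$), yielding $\ord_C(N_D)=0$ and $\ord_x(N_D|_C)=0$. In particular $\nu_D=0=\nu_{P_D}$.

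The main step is to identify the Zariski decompositions of $D-tC$ and $P_D-tC$ for $t\in[0,\mu_D]$. By the monotonicity of $t\mapsto N_{D-tC}$ recalled above, the divisor $N_{D-tC}-N_D$ is effective, so the support containment $\Supp(N_D)\subseteq\Supp(N_{D-tC})$ also holds. I then claim that
\[
P_D - tC \equ P_{D-tC} \ + \ (N_{D-tC} - N_D)
\]
is the Zariski decomposition of $P_D-tC$. The first summand is nef and the second effective. The orthogonality $P_{D-tC}\cdot(N_{D-tC}-N_D)=0$ follows from the Zariski decomposition of $D-tC$ combined with the inclusion $\Supp(N_D)\subseteq\Supp(N_{D-tC})$, which forces $P_{D-tC}\cdot E=0$ for every component $E$ of $N_D$. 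Negative-definiteness holds because the intersection matrix of the components of $N_{D-tC}-N_D$ is a principal submatrix of the negative-definite intersection matrix of $\Supp(N_{D-tC})$. By uniqueness of Zariski decomposition, one obtains $P_{P_D-tC}=P_{D-tC}$ and $N_{P_D-tC}=N_{D-tC}-N_D$.

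Granting this identification, $\mu_D=\mu_{P_D}$ because $\vol(P_D-tC)=P_{P_D-tC}^2=P_{D-tC}^2=\vol(D-tC)$. Using $\ord_x(N_D|_C)=0$, one computes
\[
\alpha_{P_D}(t) \equ \ord_x(N_{P_D-tC}|_C) \equ \ord_x(N_{D-tC}|_C) - \ord_x(N_D|_C) \equ \alpha_D(t),
\]
and $\beta_{P_D}(t)=\beta_D(t)$ since $P_{P_D-tC}\cdot C=P_{D-tC}\cdot C$. Applying the Lazarsfeld--Musta\c t\u a description then yields $\Delta_{(C,x)}(D)=\Delta_{(C,x)}(P_D)$. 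The main technical obstacle I anticipate is the careful justification of the Zariski decomposition formula for $P_D-tC$, in particular verifying that $C\not\subseteq\Supp(N_{D-tC})$ throughout $[0,\mu_D]$ so that the restrictions $(\cdot)|_C$ remain well-defined, and confirming that the monotonicity of $t\mapsto N_{D-tC}$ applies starting at $t=0$ (which hinges on $\nu_D=0$, hence on the hypothesis $x\notin\Neg(D)$).
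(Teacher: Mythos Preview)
Your argument is correct and takes a genuinely different route from the paper's. The paper first treats $\QQ$-divisors by appealing to the section-level isomorphism
\[
H^0(X,\sO_X(mP_D))\ \xrightarrow{\ \cdot\, mN_D\ }\ H^0(X,\sO_X(mD))
\]
from \cite{PAG}*{Proposition~2.3.21}: since $x\notin\Supp(N_D)$, multiplication by $mN_D$ leaves the valuation vector $\nu_{(C,x)}$ unchanged, so the two polygons agree directly from the definition. It then passes to $\RR$-divisors by approximating $D$ with big $\QQ$-divisors $D+A_n$ (with $A_n$ ample, $\|A_n\|\to 0$) and invoking the continuity of Zariski decomposition inside the big cone. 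Your approach instead works uniformly for $\RR$-divisors by matching the Lazarsfeld--Musta\c t\u a data $(\nu,\mu,\alpha,\beta)$ directly, the key point being the identification $P_{P_D-tC}=P_{D-tC}$ obtained from your explicit Zariski decomposition of $P_D-tC$. This sidesteps the approximation argument entirely; the trade-off is that it relies on the monotonicity of $t\mapsto N_{D-tC}$ from \cite{KLM}, whereas the paper's proof uses only the more elementary fact that the positive part carries all sections.

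Regarding the technical obstacle you flag, the inclusion $C\not\subseteq\Supp(N_{D-tC})$ on $[0,\mu)$ follows from the same monotonicity: if $C$ appeared in $N_{D-tC}$ with coefficient $a>0$, then for small $0<a'\leq a$ with $t+a'<\mu$ one checks that $P_{D-tC}+(N_{D-tC}-a'C)$ satisfies all the axioms of Zariski decomposition for $D-(t+a')C$, whence $N_{D-(t+a')C}=N_{D-tC}-a'C$, contradicting $N_{D-(t+a')C}\geq N_{D-tC}$.
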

\begin{proof}
We first prove the statement for $\QQ$-divisors and then use  a continuity argument for the general case. If $D$ is a big $\QQ$-divisor, then by the homogeneity of  Newton--Okounkov polygons and  Zariski decomposition we can assume that $D$,$P_D$ and $N_D$ are all integral Cartier divisors. The multiplications maps  
\[
H^0(X,\sO_X(mP_D))\rightarrow H^0(X,\sO_X(mD)) 
\]
by $mN_D$ are isomorphisms (see \cite{PAG}*{Proposition~2.3.21}). Hence the  definition of  Newton--Okounkov polygons and the condition  $x\notin \Supp(N_D)$ imply the statement.

For the general case, fix a norm $\|\cdot \|$ on the finite-dimensional vector space $\textup{N}^1(X)_{\RR}$. Let $D$ be a big $\RR$-divisor, and $(A_n)_{n\in\NN}$ a sequence of ample $\RR$-divisors such that $\lim_{n\rightarrow \infty}\big(\|A_n\|\big)= 0$, 
$A_{n+1}-A_{n}$ is an ample $\QQ$-divisor, and $D+A_n$ is a $\QQ$-divisor for any $n\in \NN$. By  the proof of \cite{AKL}*{Lemma 8}\footnote{The same statement works when each $A_n$ is taken to be big and semi-ample. This will be used in Section~3.}, 
one has the  equality
\[
\Delta_{(C,x)}(D) \equ \bigcap_{n\in\NN}^{\infty}\Delta_{(C,x)}(D+A_n)\ .
\]
This reduces  the proof to the $\QQ$-divisor case via  the continuity of Zariski decomposition (see \cite{BKS}*{Proposition 1.14}).
\end{proof}

\section{Newton--Okounkov polygons and special loci associated to divisors}

\subsection{Local constancy of Newton--Okounkov polygons}
The goal of this subsection is to prove that the set of all Newton--Okounkov polygons, where the flags are taken to be centered at a very generic point $x$, is independent of the point. 

Let $X$ be a smooth projective surface, $x\in X$ a point and 
$D$ a big $\RR$-divisor on $X$. Denote by
\[
\mathcal{K}(D,x) \equ \{ \Delta\subseteq\RR^2_+ \ | \ \exists (C,x) \textup{ an admissible flag}, \textup{ such that  } \Delta =\Delta_{(C,x)}(D)\}\ ,
\]
the set of all Newton--Okounkov polygons of $D$, where the flags are based at $x$. By \cite{KLM}, this set is countable. 
Our  goal is to figure out how this set varies with the point $x$. 

\begin{theorem}\label{thm:main0}
With notation as above, there exists a subset $F=\cup_{m\in\NN}F_m\subseteq X$ consisting of a countable union of Zariski-closed proper subsets
$F_m\subsetneq X$ such that the set $\mathcal{K}(D,x)$ is independent of $x\in X\setminus F$.
\end{theorem}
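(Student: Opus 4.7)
The plan is to construct $F$ as a countable union of proper Zariski-closed subsets, each associated with either a rigid curve on $X$ or a moving family of curves, and to show that $\mathcal{K}(D,x)$ equals a fixed intrinsic list of polygons for every $x\in X\setminus F$. The starting point is the Lazarsfeld--Musta\c t\u a description
\[
\Delta_{(C,x)}(D) \equ \{(t,y)\in\RR^2_+ : \nu\leq t\leq\mu,\ \alpha(t)\leq y\leq\beta(t)\} \ ,
\]
together with three observations: (a) $\mu=\mu(D;C)$ and the vertical length $\beta(t)-\alpha(t)=(P_{D-tC}\cdot C)$ are purely numerical in $C$; (b) $\nu\neq 0$ forces $C$ to be a component of $N_D$ (hence $C^2<0$, so $C$ is rigid) by the negative definiteness of Zariski negative parts; (c) every component of every $N_{D-tC}$, $t\in[\nu,\mu]$, has negative self-intersection and is therefore a rigid irreducible curve on $X$. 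Consequently the only genuinely $x$-dependent quantity is $\alpha(t)=\ord_x(N_{D-tC}|_C)$.

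I then enumerate the bad loci. Since irreducible curves on $X$ are parameterized by the countably many irreducible components of $\mathrm{Hilb}(X)$, the rigid irreducible curves (those with $C^2<0$, which are necessarily unique in their numerical class) form a countable collection $\{E_i\}_{i\in\NN}$; take each $E_i$ as one of the $F_m$. Each positive-dimensional irreducible component $\mathcal{H}$ of $\mathrm{Hilb}(X)$ parameterizes a moving family of irreducible curves, with proper projection $\pi\colon\mathcal{C}_{\mathcal{H}}\to X$: if $\pi$ is non-dominant, take the image $F_{\mathcal{H}}=\pi(\mathcal{C}_{\mathcal{H}})\subsetneq X$ as one of the $F_m$; if $\pi$ is dominant (hence surjective), take $\overline{B_{\mathcal{H}}}$ as one of the $F_m$, where $B_{\mathcal{H}}\subseteq X$ is the locus of $x$ through which no smooth member of $\mathcal{H}$ passes. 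The resulting $F=\bigcup_m F_m$ is then a countable union of proper closed subsets.

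For $x\in X\setminus F$ I verify two reciprocal inclusions. First, any admissible flag $(C,x)$ at $x$ has $C$ non-rigid (else $x\in E_i$) and in a dominant family (else $x\in F_{\mathcal{H}}$); thus $\nu=0$, and since the components of each $N_{D-tC}$ are rigid curves distinct from $C$ that meet $C$ in finitely many points not equal to $x$, one has $\alpha(t)\equiv 0$; hence
\[
\Delta_{(C,x)}(D) \equ \{(t,y)\in\RR^2_+ : 0\leq t\leq \mu(D;C),\ 0\leq y\leq (P_{D-tC}\cdot C)\} \ ,
\]
a polygon depending only on the numerical class $[C]$ and on $D$, which I denote $\Delta_{[C]}(D)$. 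Second, for each dominant family $\mathcal{H}$, surjectivity of $\pi$ provides some curve of $\mathcal{H}$ through $x$, and $x\notin\overline{B_{\mathcal{H}}}$ lets me pick one smooth at $x$, producing an admissible flag whose polygon is $\Delta_{[C]_{\mathcal{H}}}(D)$. Therefore $\mathcal{K}(D,x)=\{\Delta_{[C]_{\mathcal{H}}}(D):\mathcal{H}\text{ dominant}\}$, independent of $x\in X\setminus F$.

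The main obstacle is establishing that $\overline{B_{\mathcal{H}}}\subsetneq X$ for every dominant family $\mathcal{H}$; this requires a Bertini-type genericity argument on the smooth locus of the universal family $\mathcal{C}_{\mathcal{H}}\to X$, verifying that a generic curve in $\mathcal{H}$ is smooth at a generic point of $X$. A secondary but essential ingredient is the rigidity of every curve appearing in any $N_{D-tC}$, which is precisely what makes the enumeration $\{E_i\}_{i\in\NN}$ countable and thus $F_1$ a legitimate countable union of proper closed subsets.
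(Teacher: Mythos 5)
Your approach takes a genuinely different route from the paper's. The paper first proves a reduction lemma (Lemma~\ref{prop:local constancy}) — if a subset $U$ avoids all negative curves and between any two of its points every curve appearing in an admissible flag can be replaced by a numerically equivalent curve through the other point, then $\shk(D,x)$ is constant on $U$ — and then constructs such a $U$ by invoking Koll\'ar's deformation-theoretic result (Remark~\ref{rmk:normal cycles}) on topologically trivial families of normal cycles through very general points. You instead work directly with $\mathrm{Hilb}(X)$, sorting its countably many components into rigid curves, non-dominant positive-dimensional families, and dominant positive-dimensional families, and you compute the polygon explicitly from the Lazarsfeld–Musta\c{t}\u{a} description. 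The Hilbert-scheme route is more elementary in that it needs only countability of the components plus a Bertini argument, whereas Koll\'ar's theorem packages the deformation step so that it applies uniformly without any case analysis on the components.

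There is, however, a real gap in your enumeration of the bad loci. You identify the rigid irreducible curves with those satisfying $C^2<0$. The implication only goes one way: $C^2<0$ forces $C$ to be the unique effective divisor in its numerical class, hence rigid, but rigid irreducible curves with $C^2\geq 0$ do occur. For such a curve (an isolated point of $\mathrm{Hilb}(X)$, which happens whenever $h^0(N_{C/X})=0$, a condition not excluded by $C^2\geq 0$ on surfaces of general type, say), the point $[C]$ belongs neither to your list $\{E_i\}$ nor to any positive-dimensional component, so $C$ escapes $F$. If $x\notin F$ is a smooth point of such a $C$, the flag $(C,x)$ is admissible and contributes $\Delta_{[C]}(D)$ to $\shk(D,x)$, yet no curve in the class $[C]$ need pass through another very general $x'$, so the reverse inclusion fails. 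The fix is cheap — take all zero-dimensional components of $\mathrm{Hilb}(X)$ (equivalently, all rigid irreducible curves, not only the negative ones) as additional $F_m$'s; there are countably many, so $F$ remains a countable union of proper closed subsets. After this patch, your computation that $\nu=0$ and $\alpha\equiv 0$ off the negative curves, and hence that the polygon depends only on $[C]$ and $D$, is correct, and you rightly isolate the properness of $\overline{B_{\mathcal{H}}}$ as the remaining technical point; it does hold because the singular-point locus in the universal curve over a dominant component is a proper closed subset once the general fibre is reduced and irreducible.
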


The proof relies on the following observation. 

\begin{lemma}\label{prop:local constancy}
 Let $D$ be a big $\RR$-divisor on $X$,  $U\subseteq X$ be a subset with the following properties
 \begin{enumerate}
  \item $U$ is disjoint from all negative curves on $X$,
  \item for every $x,x'\in U$ and $(C,x)$ an admissible flag on $X$, there exists an irreducible curve $C'$ such that 
   $(C',x')$ is again admissible, and $C'\equiv C$.
 \end{enumerate}
 Then $\shk (D,x)$ is independent of $x\in U$. 
\end{lemma}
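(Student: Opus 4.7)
The plan is to show that $\shk(D,x)=\shk(D,x')$ for any pair of points $x,x' \in U$; by symmetry one inclusion will suffice. Given an admissible flag $(C,x)$ centered at $x \in U$, hypothesis (ii) supplies an admissible flag $(C',x')$ with $C' \equiv C$, so it reduces to verifying the identity $\Delta_{(C,x)}(D)=\Delta_{(C',x')}(D)$. I would carry this out by matching, for the two flags, the data $(\nu,\mu,\alpha,\beta)$ that appear in the Lazarsfeld--Musta\c t\u a description.

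First the endpoints of the domain. Since bigness is a numerical property and $C \equiv C'$, one immediately has $\mu(D;C)=\mu(D;C')$. Moreover, because $x \in C \cap U$ and $U$ avoids every curve of negative self-intersection, the curve $C$ satisfies $C^2 \geq 0$, so $C$ cannot sit in the support of $N_D$; hence $\nu=0$, and likewise $\nu'=0$. For the functions $\alpha$ and $\beta$, the key input is that $D_t \deq D-tC$ is numerically equivalent to $D'_t \deq D-tC'$ for every $t \in [0,\mu]$. Uniqueness of the Zariski decomposition recalled in Section~1 then forces $N_{D'_t}=N_{D_t}$ as effective divisors and $P_{D'_t}\equiv P_{D_t}$: indeed $D'_t - N_{D_t}$ is numerically equivalent to the nef divisor $P_{D_t}$, hence itself nef, and satisfies $(D'_t-N_{D_t})\cdot N_{D_t}=P_{D_t}\cdot N_{D_t}=0$, so by uniqueness this is the Zariski decomposition of $D'_t$.

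To conclude I would observe that each irreducible component of any negative part $N_{D_t}$ has strictly negative self-intersection (the diagonal of a negative definite matrix is negative), and is therefore a negative curve; by hypothesis (i) the support of $N_{D_t}$ is disjoint from $U$. In particular $x \notin \Supp(N_{D_t})$ and $C \not\subseteq \Supp(N_{D_t})$, so $\alpha(t)=(N_{D_t}\cdot C)_x=0$, and the same reasoning at $x'$ yields $\alpha'(t)=0$. Finally, intersection numbers depend only on numerical classes, so
\[
\beta(t)-\alpha(t)=P_{D_t}\cdot C = P_{D'_t}\cdot C'=\beta'(t)-\alpha'(t).
\]
All four invariants agree, so the two polygons coincide. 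The step I expect to demand the most care is the identification $N_{D'_t}=N_{D_t}$ as honest effective divisors rather than mere numerical classes, as this is precisely what makes the local intersection argument with $C$ or $C'$ rigorous and lets the disjointness from $U$ kill both $\alpha$-functions simultaneously; once this is secured, the rest is bookkeeping.
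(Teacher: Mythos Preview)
Your proof is correct and follows essentially the same route as the paper: match the four pieces of Lazarsfeld--Musta\c t\u a data $(\nu,\mu,\alpha,\beta)$ for the two flags, using that $U$ avoids negative curves to kill $\nu$ and $\alpha$, and numerical invariance of Zariski decomposition to match $\mu$ and $\beta$. The only difference is that you invest effort in proving $N_{D'_t}=N_{D_t}$ as actual effective divisors, whereas the paper is content with $P_{D_t}\equiv P_{D'_t}$; your extra step is correct but not needed, since $\alpha(t)=0$ and $\alpha'(t)=0$ can be established independently at $x$ and $x'$ (each negative part is supported on negative curves, and hypothesis~(i) keeps both $x$ and $x'$ away from all of them), so there is no need to identify the two negative parts before arguing.
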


\begin{proof}
Let $x,x'\in U$, and $(C,x)$ as in the statement. Fix an admissible flag $(C',x')$, such that $C'\equiv C$.
Observe that $\nu_{(C,x)}(D)=\nu_{(C',x')}(D)=0$, as $U$ is disjoint from all the negative curves on $X$. Since $C\equiv C'$, we have 
$D-tC \equiv D-tC'$, and therefore $\mu_{(C,x)}(D)=\mu_{(C',x')}(D)$ as well. 

Again, as $U$ avoids all negative curves, $\alpha_{(C,x)}(t) =  \alpha_{(C',x')}(t) = 0$
for all $0\leq t\leq \mu(D)$.  Finally, since Zariski decomposition respects numerical equivalence, 
$P_{D-tC}\equiv P_{D-tC'}$ for all $0\leq t\leq \mu_D(t)$, therefore
$\beta_{(C,x)}(t) \equ \beta_{(C',x')}(t)$ for all $0\leq t\leq \mu(D)$,
hence $\Delta_{(C,x)}(D) \equ \Delta_{(C',x')}(D)$,  as required. 
\end{proof}

\begin{remark}\label{rmk:normal cycles} 
Roughly speaking the main idea of the proof of Theorem~\ref{thm:main0} is that whenever  $x\in X$ is a very general point then any cycle passing 
through this point can be deformed  non-trivially in its numerical equivalence class. The source for this material is  \cite{K}*{Chapter 2}.

To be more precise, \cite{K}*{Proposition 2.5} says that there exists a countable union of proper closed subvarieties $F\subseteq X$ such that for any $x\in VG(X)\deq X\setminus F$ and any birational morphism $w_0:C\rightarrow X$, where $C$ is a smooth irreducible curve with 
$x\in \textup{Im}(w_0)$, there exists a topologically trivial family of normal cycles
\begin{equation}\label{eq:cycle}
\xymatrix{  
U \ar[r]_{u} \ar[d]^{p} & X \\
T & }
\end{equation}
where $T$ is irreducible, $p$ is smooth (any fiber is smooth irreducible curve), $u$ is dominant, 
for any $t\in T$ the morphism $u|_{U_t}:U_t=p^{-1}(t)\rightarrow X$ is a birational morphism, and there exists $t_0\in T$, 
such that $u|_{U_{t_0}}:U_{t_0}\rightarrow X$ is the map $w_0$. 
\end{remark}

\begin{proof}[Proof of Theorem~\ref{thm:main0}]
First let us make the following definition. For every element $\gamma\in \textup{N}^1(X)$, the N\'eron--Severi group, let 
\[
 \Sing (\gamma) \deq \st{x\in X\mid x\in\Sing(C)\ \text{for every curve $C\in\gamma$ with $x\in X$}}\ .
\]
Note that $\Sing (\gamma)$ is always a proper subvariety of $X$. Let $VG(X)\subseteq X$ be the very general 
subset from Remark~\ref{rmk:normal cycles}; 
for every $\gamma\in \textup{N}^1(X)$, fix a topologically trivial family of normal cycles $u_\gamma$ with fibre class $\gamma$ 
(as in Remark~\ref{rmk:normal cycles}), and set 
\[
 \Defect (u_\gamma) \deq X\setminus \textup{Im}(u_\gamma) \ . 
\]
Since $u_\gamma$ is dominant, $\Defect(u_\gamma)$ is contained in a proper closed subvariety of $X$. Let
\[
 \shs \deq \bigcup_{\gamma\in \textup{N}^1(X)} \Sing(\gamma)\ \ \text{ and }\ \ \shd \deq \bigcup_{\gamma\in \textup{N}^1(X)} \Defect(u_\gamma)\ .
\]
We will apply
Lemma~\ref{prop:local constancy} with 
\[
\shu \deq VG(X) \setminus (\shs\cup\shd\cup ( \bigcup_{C \text{ is a negative curve}} C ) )\ .
\]
Observe that by construction $\shu$ is  disjoint from  all negative curves on $X$. 

Let  $x,x'\in \shu$, and $(C,x)$ be  an admissible flag. Then  $x\notin \Defect(u_\gamma)$, hence $u_\gamma$ has a fibre through $x$, whose image is $C$, and the same applies to $x'$, let us call this curve $C'$. By construction $C'$ is  irreducible. 
Although $C'$ might itself be singular at $x'$, there will be a curve numerically
equivalent to it which is smooth at $x'$, as $x'\notin \Sing([C])$ by the construction of $U$. 
\end{proof}

\subsection{Null and negative loci versus Newton--Okounkov polygons}

As mentioned above, the complements of the null or the negative loci describe 
the set of  points on $X$ where $D$ is positive locally. The following theorem explains this philosophy in the language of Newton--Okounkov polygons.

\begin{theorem}\label{thm:main1}
Let $X$ be a  smooth projective surface, $D$ be a big $\RR$-divisor on $X$ and $x\in X$ be an arbitrary point. Then 
\begin{enumerate}[i]
\item[(i)]  $x\notin \Neg(D)$ if and only if there exists an admissible flag $(C,x)$ such that the Newton--Okounkov polygon 
$\Delta_{(C,x)}(D)$ contains the origin $(0,0)\in \RR^2$.
\item[(ii)]  $x\notin \Null(D)$ if and only if there exists an admissible flag $(C,x)$ and a positive real number $\lambda >0$ such that 
$\Delta_{\lambda}\subseteq \Delta_{(C,x)}(D)$.
\end{enumerate}
\end{theorem}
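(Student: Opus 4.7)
This is a direct translation through the Lazarsfeld--Musta\c t\u a description. The origin belongs to $\Delta_{(C,x)}(D)$ precisely when $\nu = \ord_C(N_D) = 0$ and $\alpha(0) = \ord_x(N_D|_C) = 0$, which together are equivalent to $x \notin \Supp(N_D) = \Neg(D)$. Conversely, if $x \notin \Neg(D)$, any smooth irreducible curve $C$ through $x$ (e.g.\ a general section of a sufficiently ample line bundle vanishing at $x$) is automatically not contained in $\Supp(N_D)$ and meets it away from $x$, so $(0,0)\in\Delta_{(C,x)}(D)$.

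\textbf{Part (ii), reverse direction.} Since $(0,0) \in \Delta_\lambda \subseteq \Delta_{(C,x)}(D)$, part (i) gives $x \notin \Neg(D)$, so by Lemma~\ref{lem:positive} we may replace $D$ by $P_D$ and assume $D$ is nef and big. I argue by contradiction: suppose there exists an irreducible $E \ni x$ with $D \cdot E = 0$, so $E^2 < 0$ by Remark~\ref{rem:negativedefinite}. If $C = E$ then $\beta(0) = D \cdot E = 0$, forcing the $t = 0$ slice of the polygon to be the single point $(0,0)$ and contradicting $(0,\lambda) \in \Delta_\lambda$. If $C \neq E$, take any effective $D' \equiv D$ with valuation $(t,y) = \nu_{(C,x)}(D')$ and write $D' = tC + aE + D'''$ with $a \geq 0$ and $E \not\subseteq \Supp(D''')$. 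Intersecting with $E$ and using $D' \cdot E = 0$, $D''' \cdot E \geq 0$, and $E^2 < 0$ gives $a \geq t(C \cdot E)/(-E^2)$; since $(E \cdot C)_x \geq 1$, we conclude $y \geq a(E\cdot C)_x \geq c t$ for the strictly positive constant $c = (C\cdot E)/(-E^2)$. Taking the closure of valuative points shows $\Delta_{(C,x)}(D) \subseteq \{ y \geq ct\}$, which is incompatible with $(\lambda,0) \in \Delta_\lambda$.

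\textbf{Part (ii), forward direction.} Again reduce to $D$ nef and big. By Lemma~\ref{lem:kodaira} one may write $D = A + \sum_{i=1}^r b_i E_i$ with $A$ ample, $b_i > 0$, and $E_1,\ldots,E_r$ enumerating the null curves of $D$; the hypothesis gives $x \notin \bigcup_i E_i$. Choose $C$ to be an irreducible curve through $x$, smooth at $x$, moving in a very ample linear system, and generic enough that $C \cap E_i$ avoids $x$ for every $i$. Setting $M = (E_i \cdot E_j)_{ij}$ and $c = (C\cdot E_1,\ldots,C\cdot E_r)^T$, the vector $g := -M^{-1}c$ has strictly positive entries by \cite{BKS}*{Lemma 4.1}. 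A direct check (orthogonality $P_t \cdot E_j = 0$ is forced by the definition of $g$) shows that for all $t$ in some positive interval $[0,\lambda_0]$ the Zariski decomposition of $D - tC$ is
\[
D - tC \equ P_t + N_t, \qquad P_t \equ D - t\bigl(C + {\textstyle\sum}_i g_i E_i\bigr),\quad N_t \equ t{\textstyle\sum}_i g_i E_i.
\]
Because $x \notin E_i$, we get $\alpha(t) \equiv 0$ on $[0,\lambda_0]$, while $\beta(t) = P_t \cdot C$ is linear in $t$ with $\beta(0) = D \cdot C > 0$. Taking $\lambda > 0$ small enough so that $\lambda \leq \lambda_0$ and $\beta(t) \geq \lambda - t$ on $[0,\lambda]$ yields $\Delta_\lambda \subseteq \Delta_{(C,x)}(D)$.

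\textbf{Main obstacle.} The technical heart is establishing the explicit Zariski decomposition above uniformly on a positive interval $[0,\lambda_0]$; this amounts to ruling out the appearance of new curves in $\Supp(N_{D-tC})$ for small $t > 0$. I would handle it by invoking the local polyhedral structure of Zariski chambers near $D$ (cf.\ \cite{BKS}*{Corollary 1.3}) together with the refined Kodaira lemma (Lemma~\ref{lem:kodaira}), which guarantees that the negative part of $D-tC$ stays supported on the prescribed family $\{E_i\}$ for all sufficiently small $t$.
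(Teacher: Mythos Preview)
Your proof is correct. Part~(i) and the reverse direction of~(ii) match the paper almost exactly; in the case $C\neq E$ of~(ii) you argue slightly more directly via intersection numbers on an arbitrary effective $D'\equiv D$, whereas the paper observes instead that $(D-tC)\cdot E<0$ forces $E\subseteq\Supp(N_t)$ and hence $\alpha(t)>0$. Both routes are short and equivalent in spirit.

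The forward direction of~(ii) is where your argument genuinely diverges from the paper's. You choose $C$ very ample and write down the Zariski decomposition of $D-tC$ explicitly as $P_t+t\sum_i g_iE_i$ with $g=-M^{-1}c$, then read off $\alpha(t)\equiv 0$ from $x\notin\bigcup_i E_i$. The paper instead fixes an \emph{arbitrary} admissible flag $(C,x)$, splits the finitely many candidate negative curves near $D$ (given by the local Zariski chamber structure) into those through $x$ and those not, and uses continuity of the coefficients $b_j^t$ together with $D\cdot\Gamma_i>0$ to show that no curve through $x$ can enter $\Neg(D-tC)$ for small $t$. Both arguments rest on the same input from \cite{BKS}, namely that $\Supp(N_{D-tC})$ is contained in $\Null(D)$ for $t$ small; you defer this to your ``Main obstacle'' paragraph and correctly point to the chamber decomposition for its justification. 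What your approach buys is an explicit formula for $N_t$; what the paper's buys is the stronger conclusion (used in the remark following the theorem) that \emph{every} flag centered at $x$ sees a small simplex, not just a very ample one.
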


\begin{remark}
It is immediate to see that $D$ is nef if and only if $\Neg(D)=\varnothing$, while $D$ is ample if and only if $\Null(D)=\varnothing$. Thus the corresponding nefness and ampleness criteria of Theorem A follow immediately from Theorem~\ref{thm:main1}.
\end{remark}

\begin{remark}
Theorem~\ref{thm:main1} implies that whenever  $(0,0)\in \Delta_{(C,x)}(D)$ for some admissible flag $(C,x)$, then the same holds  for all admissible flags centered at $x$. The analogous statement about points not contained in $\textup{Null}(D)$ is true as well. 
\end{remark}

\begin{example}\label{ex:interesting}
We discuss an  example of a  Newton--Okounkov polygon that  does not contain vertical/horizontal edges emanating from the origin, but the origin is contained in it. Let $X=\textup{Bl}_P(\PP^2)$ be the blow-up of the projective plane $\PP^2$ at the point $P$, $H$  the pull-back of $\sO_{\PP^2}(1)$, $E$ the exceptional curve, and  $x\in E$ be a point. Furthermore, let $\pi:X'\rightarrow X$ be the blow-up of $X$ at a point $x\in E$. Denote by $E_1$ the exceptional divisor of $\pi$, by $E_2$ the proper transform of $E$ on $X'$, and $E_3$  the proper transform of the line in $\PP^2$ passing through $P$ with the tangent direction given by the point $x\in E$. 

It is not hard to see that $(E_1^2) \equ (E_3^2) \equ -1$ and $(E_2^2) \equ -2$. Notice in addition that $E_1$ intersects both $E_2$ and $E_3$ at different points transversally, while the pair $E_2$ and $E_3$ does not intersect. The classes $E_1,E_2$, and $E_3$ generate the space $\textup{N}^1(X')_{\RR}$. A quick computation gives $\pi^*(H) \equ 2E_1+E_2+E_3$. Thus
\[ 
(\pi^*(D)-tE_1 \cdot E_2) \equ ((2-t)E_1+E_2+E_3 \cdot  E_2) \equ -t\ ,
\]
hence  $E_2\subseteq \Neg(\pi^*(D)-E_1)$ for all $0<t\ll 1$.

Let $\{y\} \deq E_1\cap E_2$. Comparing  with \cite{LM}*{Theorem 6.4}, note  that $\alpha(t)>0$ for any $0<t\ll 1$ with respect to the flag $(E_1,y)$. Furthermore, by Proposition~\ref{prop:propinf} and the above, one sees easily that the Newton--Okounkov polygon $\Delta_{(E_1,y)}(\pi^*(H))$ does not contain either a horizontal or a vertical edge starting at the origin, but contains the origin. This convex polygon is what we call in the next section the infinitesimal Newton--Okounkov polygon of the divisor $H$ on $X$ at the point $x$.

It is interesting to note that $X'$ has three negative curves $E_1,E_2$ and $E_3$, but the nef cone is minimally generated by four classes: 
$\pi^*(H),\pi^*(H)+E_3,\pi^*(H)+E_1+E_3$ and $E_1+E_3$
\end{example}

\begin{proof}[Proof of Theorem~\ref{thm:main1}]
$(i)$ By \cite{LM}*{Theorem 6.4} we obtain the following sequence of equivalences: 
\begin{eqnarray*}
(0,0)\in\Delta_{(C,x)}(D) & \Leftrightarrow & \nu\equ 0\ \ \text{ and }\ \ \alpha(0) \equ 0 \ , \\
& \Leftrightarrow & C \nsubseteq\Neg(D)\ \  \text{ and }\ \ x\notin \Neg(D) \\
& \Leftrightarrow & x\notin \Neg(D)\ ,
\end{eqnarray*}
which is what we wanted.

$(ii)$ ``$\Rightarrow$'' Let us  assume that $x\notin \Null(D)$. Since $\Neg(D)\subseteq \Null(D)$, this implies  $x\notin \Neg(D)$. 
By Lemma~\ref{lem:positive} we  can also assume without loss of generality that $D=P_D$, that is, $D$ is big and nef. These conditions yield  that $(D.C)>0$ for any irreducible curve $C\subseteq X$ passing through $x$. In particular the convex polygon $\Delta_{(C,x)}(D)$  contains the origin $(0,0)$ and the vertical segment $\{0\}\times [0,(D.C)]$ with non-empty interior for any admissible flag $(C,x)$. 

By fixing an admissible flag $(C,x)$, it remains then to show that the polygon $\Delta_{(C,x)}(D)$ contains a horizontal segment with non-empty interior starting at the origin. On the other hand, by the  convexity  of  Newton--Okounkov polygons, statement $(i)$, and Remark~\ref{rem:shift}, in order to prove the latter condition, it is enough to show that there exists $t>0$, such that $x\notin \textup{Neg}(D-t C)$.

By \cite{BKS}*{Theorem~1.1}, the divisor $D$ has an open neighborhood $\mathscr{U}$ inside the big cone, which intersects only finitely many Zariski chambers. Thus, there exist finitely many curves $\Gamma_1,\dots,\Gamma_s, \Gamma_{s+1}',\ldots ,\Gamma_{r}'$ such that 
\[
\Neg(D-t C) \ \subseteq \ \big(\bigcup_{i=1}^{i=s}\Gamma_i\big)\ \cup\ \big(\bigcup_{j=s+1}^{j=r}\Gamma_j'\big), \textup{ whenever }D-t C\in \mathscr{U} \ ,
\]
where $\Gamma_i$ are the curves containing $x$ and $\Gamma_j'$ are those which don't contain this point. By the above $\gamma_i\deq (D\cdot\Gamma_i) >0$ and thus $\gamma\deq \min_{1\leq i\leq s}\gamma_i>0$. 

Observe  that by possibly shrinking $\mathscr{U}$, we can arrange that even its closure intersects only finitely many Zariski chambers and remains inside the big cone.  Let 
\[
 D-t C\equ P_{t} + \left( \sum_{i=1}^{s}a_i^{t}\Gamma_i  + \sum_{j=s+1}^{r}b_j^{t}\Gamma_j'\right)
\]
be the Zariski decomposition of $D-t C$. By \cite{KLM}*{Proposition 2.1}, the function $a_i^{t}$ and $b_{j}^{t}$ depend continuously on $t$ as long as $D-t C\in\mathscr{U}$. Also, $a_i^{0}=b_j^{0}=0$ for all $i$ and $j$, since $D$ is nef.

Consider now the divisor 
\[
 D'_{t} \deq D-t C-\sum_{j=s+1}^{r}b_j^{t}\Gamma_j' \equ P_{t} + \sum_{i=1}^{s}a_i^{t}\Gamma_i\ .
\]
Notice that the right-hand side is the Zariski decomposition of $D'_{t}$. For every $1\leq i\leq s$ one has  
\[
 (D'_{t}\cdot \Gamma_i) \ \equ \ (D\cdot \Gamma_i) - t(C\cdot\Gamma_i) - (\sum_{j=s+1}^{r}b^{t}_j\Gamma_j')\cdot \Gamma_i)  \ \geq \ \gamma  - t(C\cdot\Gamma_i)  - \sum_{j=s+1}^{r}b_j^{t}(\Gamma_j'\cdot\Gamma_i) \ .
\]
By taking $t$ sufficiently small and by the continuity of $b_{j}^{t}$ as a function of $t$, then $(D'_{t}\cdot \Gamma_i) > 0$ for all $1\leq i\leq s$. On the other hand these negative curves are the only candidates for components of $\Neg(D'_t)$. Consequently, all the $a^{t}_{i}$'s are 
zero, and 
\[
 D - t C \equ P_{t} + \sum_{j=s+1}^{r}b^{t}_j\Gamma_j'
\]
is the Zariski decomposition of $D-t C$. Since the curves $\Gamma_j'$ are exactly the ones not containing $x$, we arrive at the desired conclusion
$x\notin \Neg(D-t C)$ for small $t$. 

``$\Leftarrow$'' We consider first the case when $D$ is a big and nef divisor, i.e. $D=P_D$ and $N_D=0$. 
Suppose  for a contradiction that there exists a curve $E\subseteq\Null(D)$ such that $x\in E$. If $C=E$, then, by the description of the Newton--Okounkov polygon of $D$, $\beta(0)=0$, as $(D.E)=0$. Thus, 
\[
\Delta_{(C,x)}(D) \ \bigcap \ \big(\{0\}\times\RR\big) \ = \ (0,0)\ .
\]
Consequently, $\Delta_{(C,x)}(D)$ cannot contain a small standard simplex. 

If $C\neq E$, then $C\cdot E>0$, as both $C$ and $E$ contain the point $x$. 
Thus, $(D-tC)\cdot E<0$  for any $t\ll 1$, This implies that  $E\subseteq \Supp (N_t)$ 
and consequently that $\alpha(t)=\textup{ord}_x(N_t|_C)>0$ for $t\ll 1$. Therefore,
\[
\Delta_{(C,x)}(D) \ \bigcap \ \big(\RR\times\{0\}\big) \ = \ (0,0)\ ,
\]
and again $\Delta_{(C,x)}(D)$ does not contain a standard simplex of any size. This leads to a contradiction to the existence of the curve $E$.

The general case, when $D$ is big, follows immediately from Lemma~\ref{lem:positive} and the observation that the condition $x\notin \Neg(D)$ is implied by the equivalence in statement $(i)$.
\end{proof}

\subsection{Valuative points.} By the definition given by Lazarsfeld and Musta{\c{t}}{\u{a}}, the
Newton--Okounkov polygon of a big $\QQ$-divisor $D$ encodes how all the sections of all powers of $D$ vanish along a fixed flag. Although not observed in \cite{LM}, the points that come from evaluating sections form a dense subset in the Newton--Okounkov polygon (hence, a posteriori there is no need for forming the convex hull in the construction). 

Conversely, it is a very exciting problem to find out exactly which points in the plane are given by valuations of sections, whether these points  lie in the interior or the boundary of the Newton--Okounkov polygon. 
To provide a partial answer,  we start with the following definition:
\begin{definition}
Let $D$ be a  big $\QQ$ ($\RR$)-divisor and $(C,x)$ an admissible flag on $X$. We say that a point $(t,y)\in \Delta_{(C,x)}(D)\cap\QQ^2$ 
(or $(t,y)\in\Delta_{(C,x)}(D)\cap\RR^2$ in the case of real divisors) is \emph{a valuative point of} $D$ \emph{with respect to the flag} $(C,x)$, 
if there exists an effective $\QQ$-divisor $D'\sim_{\QQ}D$ (an  effective $\RR$-divisor $D'\equiv D$) satisfying the property 
$\nu_{(C,x)}(D')=(t,y)$.  
\end{definition}

\begin{remark}
The fact that certain rational points in a Newton--Okounkov polygon are valuative is equivalent to the existence of sections with prescribed 
vanishing behaviour along the given flag in a linear series $|mD|$ for $m\gg 0$.  
\end{remark}

\begin{corollary}\label{cor:valpoints}
Let $D$ be a big $\QQ$-divisor and $(C,x)$ be an admissible flag on $X$. Then 
\begin{itemize}
\item[(i)]  Any rational point in $\textup{int}(\Delta_{(C,x)}(D))$ is a valuative point;
\item[(ii)] Suppose $\Delta_{\lambda,\lambda'}\subseteq \Delta_{(C,x)}(D)$ for some $\lambda,\lambda'>0$. Then any rational point on the horizontal segment $[0,\lambda)\times\{0\}$ and the vertical one $\{0\}\times [0,\lambda')$ is a valuative point.
\end{itemize}
\end{corollary}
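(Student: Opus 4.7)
The plan is to use Remark~\ref{rem:shift} to reduce each claim to a base valuativity question for a shifted divisor, invoke Theorem~\ref{thm:main1} to extract the local positivity needed, and then construct the required effective $\QQ$-divisor via Zariski decomposition combined with Lemma~\ref{lem:kodaira}.

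For (i), the argument is semigroup-theoretic. The rational valuative points are exactly those of the subsemigroup $\Gamma_{(C,x)}(D) \subseteq \NN^3$ recording, for each $m\geq 1$, the triples $(m,\nu_{(C,x)}(s))$ coming from nonzero sections $s \in H^0(X, mD)$. For a big $D$ on a smooth surface with an admissible flag, this semigroup satisfies the standard hypotheses (its cone has non-empty interior and the group it generates has rank three), so by the Khovanskii-type lemma used in \cite{KK} and \cite{LM} every rational point of $\textup{int}(\Delta_{(C,x)}(D))$ arises as $(a/m,b/m)$ for some $(m,a,b) \in \Gamma_{(C,x)}(D)$, i.e., is valuative.

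For the horizontal segment case of (ii), given rational $t \in [0,\lambda)$, Remark~\ref{rem:shift} reduces the problem to showing that $(0,0)$ is valuative for $D' \deq D-tC$. The hypothesis $\Delta_{\lambda,\lambda'} \subseteq \Delta_{(C,x)}(D)$ translates into $\Delta_{\lambda-t,\lambda'} \subseteq \Delta_{(C,x)}(D')$, which contains a standard simplex of positive size; Theorem~\ref{thm:main1}(ii) then forces $x \notin \Null(D')$. Writing $D' = P + N$ for the Zariski decomposition, one has $x \notin \Supp(N)$ and $x \notin \Null(P) = \textup{\textbf{B}}_+(P) \supseteq \textup{\textbf{B}}(P)$, so some multiple $mP$ admits a section not vanishing at $x$. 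The resulting effective $D_1 \sim_\QQ P$ avoids $x$, and $D_1 + N \sim_\QQ D'$ witnesses $(0,0)$ as valuative for $D'$.

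For the vertical segment, the task is to produce an effective $D_1 \sim_\QQ P_D$ with $\ord_C(D_1) = 0$ and $\ord_x(D_1|_C) = y$ exactly, for given rational $y \in [0,\lambda')$. Theorem~\ref{thm:main1} supplies $x \notin \Neg(D)$ (so $N_D$ avoids $x$), $x \notin \Null(P_D)$, and $C \not\subseteq \Null(P_D)$ (since $(P_D\cdot C) \geq \lambda' > 0$), so the curves $E_1,\ldots,E_r$ of $\Null(P_D)$ all avoid $x$ and are distinct from $C$. By Lemma~\ref{lem:kodaira} the perturbation $A \deq P_D - \delta \sum a_i E_i$ is ample for suitable small $\delta > 0$ and positive $a_i$; for $m$ large enough, $mA - C$ is also ample, so Kawamata--Viehweg vanishing makes the restriction $H^0(X, mA) \twoheadrightarrow H^0(C, mA|_C)$ surjective, and since $\deg(mA|_C) > my$ once $\delta$ is small and $m$ is large, we can find a section of $mA|_C$ vanishing at $x$ to order exactly $my$ and lift it to $s \in H^0(X, mA)$ with $C \not\subseteq \Supp(\mathrm{div}(s))$. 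Setting $D_1 \deq \tfrac{1}{m}\mathrm{div}(s) + \delta \sum a_i E_i \sim_\QQ P_D$ gives the desired effective divisor, and $D_1 + N_D \sim_\QQ D$ witnesses $(0,y)$ as valuative. The main obstacle is achieving the \emph{exact} vanishing order in the vertical case, which is why one must perturb $P_D$ to an ample $A$ in order to apply vanishing theorems and prescribe jets of sections of $mA|_C$ at $x$.
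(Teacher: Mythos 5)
Your proof is correct, but in two of the three steps it takes a route that the paper deliberately avoids, so the comparison is worth spelling out.

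For (i) you invoke the Khovanskii semigroup lemma on $\Gamma_{(C,x)}(D)\subseteq\NN^3$. This is perfectly valid (and you are right that the generated group having rank three suffices, since a rational interior point lies, after clearing denominators, on a ray whose large multiples land in any finite-index sublattice). The paper instead argues elementarily: valuative points have rational coordinates and $\nu_{(C,x)}$ is additive on effective divisors, so rational convex combinations of valuative points are valuative; then for a rational interior point $(t_0,y_0)$ one picks valuative points on either side of the line $t=t_0$ in each of the regions $\{y\geq y_0\}$ and $\{y\leq y_0\}$ (using density), and the resulting segments cross $t=t_0$ at rational points above and below $(t_0,y_0)$. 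Your route is the standard one from \cite{KK}/\cite{LM}; the paper's is self-contained and requires nothing beyond the surface-specific machinery already developed.

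For the horizontal part of (ii) your argument is essentially identical to the paper's: shift by $\delta C$ via Remark~\ref{rem:shift}, detect $x\notin\Null(D-\delta C)$ from Theorem~\ref{thm:main1}, deduce $x\notin\textup{\textbf{B}}(D-\delta C)$ and produce the section.

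For the vertical part of (ii) you diverge most sharply. You perturb $P_D$ to an ample $A$ via Lemma~\ref{lem:kodaira}, apply vanishing to make $H^0(X,mA)\to H^0(C,mA|_C)$ surjective, and prescribe jets on $C$ by Riemann--Roch. This works (the degree and order-of-vanishing bookkeeping is correct once you note $(P_D\cdot C)\geq\lambda'>y$, the $E_i$ avoid both $x$ and $C$, and you only ever ask for exact order $my<m(A\cdot C)$, steering clear of the top vertex where rationality of $C$ would matter). But it is precisely the restricted-volume/vanishing-theorem argument that the paper's Remark preceding the corollary attributes to \cite{ELMNP2} and explicitly declines to use: the paper instead adds $\epsilon C$ to $D$, shows $\vol_X(D+\epsilon C)>\vol_X(D)$ using Nakai--Moishezon and Zariski-decomposition minimality, concludes the strip $(0,\epsilon)\times\RR$ of $\Delta_{(C,x)}(D+\epsilon C)$ has interior (hence valuative) points, and then translates the vertical segment of $\Delta_{(C,x)}(D)$ into that strip via Remark~\ref{rem:shift} and applies part (i). The paper's version stays entirely within convex geometry and Zariski decomposition; yours is shorter to state but imports a vanishing theorem. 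Both are complete proofs.
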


\begin{remark}\label{rem:real}
If instead we consider $D$ to be a big $\RR$-divisor and change in the definition of valuative points $\QQ$-linear equivalence by numerical equivalence, we obtain the same statement as in Corollary~\ref{cor:valpoints} in this more general setup.
\end{remark}

\begin{remark}
For the vertical segment $\{0\}\times[0,\lambda')$, one can obtain the statement as a consequence of  \cite{ELMNP2}*{Theorem 2.13}, as illustrated  in \cite{ELMNP2}*{Example 2.13}, along  with the restriction theorem \cite{LM}*{Theorem 4.24} for  Newton--Okounkov bodies. Here we give a different proof for the surface case, relying only on ideas of convex geometric nature arising from the theory developed so far.
\end{remark}

\begin{remark}\label{rem:vanishing}
Let $A$ be an ample $\QQ$-divisor on $X$, $C\subseteq X$ be a rational curve and $x\in C$ a smooth point, set $d = (A\cdot C)$. Then $(0,d)$, the highest vertex of the polygon $\Delta_{(C,x)}(A)$ on the $y$-axis, is also  a valuative point. The argument follows from  Serre vanishing and the rationality of $C$. 

Since $A$ is ample,  $H ^1(X,\sO_{X}(mA-C))=0$ for all $m\gg 0$ by Serre vanishing. Therefore, the restriction maps $H^0(X,\sO_{X}(mA))\longrightarrow H^0(\PP^1,\sO_{\PP^1}(md))$
are  all surjective. As  $C$ is a rational curve, there exists $\overline{s}\in H^0(\PP^1,\sO_{\PP^1}(md))$ such that $\mult_{x}(\overline{s})=md$. By the surjectivity of the restriction maps there exists a section $s\in H^0(X,\sO_{X}(mA))$ whose image is $\overline{s}$. In particular, $\nu_{(C,x)}(s)=(0,d)$.

The  proof shows that in fact all  rational points on the edge of $\Delta_{(C,x)}(A)$ with vertices at $(0,0)$ and $(0,d)$ are valuative points. In this sense  Corollary~\ref{cor:valpoints} serves as a local restriction theorem, and  Newton--Okounkov polygons give us some elbow room to obtain  local statements without having to rely on  vanishing theorems.

As explained in \cite{AKL}*{Proposition~14}, if $C$ is not rational, then there  exist line bundles $L$ of degree $d>0$ on $C$, so that no section $s\in H^0(C,mL)$ has $\ord_x(s)=md$ for any $m>0$. Thus, the rationality of $C$ is crucial. If $D$ is a big divisor and $C$ is a rational curve then one has to assume additionally that $\Null(D)\cap C=\varnothing$. Based on the proof of Corollary~\ref{cor:ratvaluation}, this will imply that the highest vertex of the polygon $\Delta_{(c,x)}(D)$ on the $y$-axis is also a valuative point. 
\end{remark}

\begin{proof}[Proof of Corollary~\ref{cor:valpoints}]
$(i)$ The following remark will be used repeatedly throughout this proof: given two valuative points $A=(t,y), B=(t',y')\in\Delta_{(C,x)}(D)$, any rational point contained in the line segment $[AB]$, connecting the point $A$ with $B$, is again a valuative point. This is due to the fact that  $\nu_{(C,x)}$ is a valuation map.

Let $(t_0,y_0)\in\textup{int}(\Delta_{(C,x)}(D)$ be a point with rational coordinates. The idea is to show that
there exist valuative points on the vertical line $t=t_0$ above and below $(t_0,y_0)$. We verify the existence of a valuative point lying above $(t_0,y_0)$, the other case being completely analogous. Consider the interior of the shape $\Delta_{(C,x)}(D)\cap\{(t,y)|y\geq y_0\}$, which is divided into two non-empty subsets by the line $t=t_0$. Since  valuative points are dense in each subset, we can choose a point in each of them. The line segment connecting these two points intersects the line $t=t_0$ in a rational point that is above $(t_0,y_0)$. Hence by the above observations this point is also valuative. 

$(ii)$ We check first the assertion on the horizontal line segment. Let $\delta\in [0,\lambda)$ be a rational number. By Remark~\ref{rem:shift}, the polygon $\Delta_{(C,x)}(D-\delta C)$ contains a non-zero simplex. By Theorem~\ref{thm:main1} this latter condition implies  that $x\notin \Null(D-\delta C)$. Since $\textup{\textbf{B}}(D-\delta C)\dsubseteq \Null(D-\delta C)$, then 
$x\notin \textup{\textbf{B}}(D-\delta C)$. Thus, the origin $(0,0)\in\Delta_{(C,x)}(D-\delta C)$ is a valuative point with respect to the divisor $D-\delta C$. Using Remark~\ref{rem:shift}, then the point $(\delta, 0)\in\Delta_{(C,x)}(D)$  is also valuative with respect now to $D$.

It remains to show that all rational points on the open line segment $\{0\}\times (0,\lambda')$ are  valuative as well. To this end, observe  by Remark~\ref{rem:shift} that 
\begin{equation}\label{eq:sum}
\Delta_{(C,x)}(D+\epsilon C)_{t\geq \epsilon} \equ  (\epsilon,0)+\Delta_{(C,x)}(D) \ \ \  \textup{ for any }\epsilon>0\ .
\end{equation}
So, if we show that $\vol_X(D+\epsilon C)>\vol_X(D)$ for some rational $\epsilon>0$, then there will exist a valuative point in the area $\Delta_{(C,x)}(D+\epsilon C)\cap (0,\epsilon)\times \RR$. On the other hand this implies that the open line segment $\{\epsilon\}\times (0,\lambda' )$ is inside $\Delta_{(C,x)}(D+\epsilon C)$. By statement $(i)$, any rational point on this line segment is valuative for the $\QQ$-divisor $D+\epsilon C$. Applying  (\ref{eq:sum}) again along with Lazarsfeld and Musta\c t\u a's definition of  Newton--Okounkov polygons  yields that the same can be said about all the rational points on the vertical segment $\{0\}\times(0,\lambda')$ in the polygon $\Delta_{(C,x)}(D)$.

It remains to prove that $\vol_X(D+\epsilon C)>\vol_X(D)$ for some $0<\epsilon \ll 1$. To this end, assume first that $D=P$ is big and nef. Since $\Delta_{(C,x)}(D)$ contains a standard simplex, then by Theorem~\ref{thm:main1} we know that $C\nsubseteq \Null(D)$. This latter condition implies that 
$(D\cdot C)>0$. Thus, the  Nakai--Moishezon criterion yields that $D+\epsilon C$ is also nef for $\epsilon\ll 1$ and we have the following inequality
\[
\vol_X(D+\epsilon C) \equ \frac{(D+\epsilon C)^2}{2} \ > \ \frac{D^2}{2} \equ \vol_X(D)\ ,
\]
whenever $\epsilon \ll 1$, settling the claim for  $D$  big and nef. For the general case, let $D=P+N$ and 
$D+\epsilon C=P_{\epsilon}+N_{\epsilon}$ be the respective Zariski decompositions of $D$ and $D+\epsilon C$. By the previous step we know that $P+\epsilon C$ is nef for all $0<\epsilon\ll 1$, hence  one can write $D+\epsilon C = (P+\epsilon C)+N$, and by the minimality of the Zariski decomposition we see  that $N-N_{\epsilon}$ is effective. Consequently,  $P_{\epsilon}-(P+\epsilon C)$ is also effective. Using  the ideas from the big and nef case  we deduce that 
\[
\textup{vol}_X(D+\epsilon C)=\textup{vol}_X(P_{\epsilon})\ \geq \ \textup{vol}_X(P+\epsilon C)\ >\  \textup{vol}_X(P)=\textup{vol}_X(D) 
\]
for all $0<\epsilon \ll 1$. This also proves the corollary in the big case.
\end{proof}


\section{Moving Seshadri constants and infinitesimal Newton--Okounkov polygons}
 The goal that we pursue in this section is to study the relationship between the positivity properties of a big divisor and the geometry of the Newton--Okounkov polygons that can be defined on the blow-up of a point. We show how  moving Seshadri constants can be read off from these polygons,
 and study which of their boundary points  are valuative.

As before, we assume $X$ to be a smooth projective surface, $D$ a big $\QQ$ (or $\RR$) divisor on $X$, and $x\in X$ a point. We denote by $\pi:X'\rightarrow X$ the blow-up of $X$ at $x$ with  $E$  the exceptional divisor. For any point $y\in E$, we call the polygon $\Delta_{(E,y)}(\pi^*(D))$ the \emph{the infinitesimal Newton--Okounkov polygon} of $D$ attached 
to the admissible flag $(E,y)$. This concept originates in \cite{LM}*{Section~5} (note the deviation from the terminology of \cite{LM}). 
The goal of this section is to explore the relationship between the positivity properties of the divisor $D$ and the geometry of the polygons $\Delta_{(E,y)}(\pi^*(D))$.

\subsection{Infinitesimal Newton--Okounkov polygons} 
In this subsection, we study the basic properties of the infinitesimal Newton--Okounkov polygons.
For a big $\RR$-divisor $D$, write 
\[
\mu' \equ \mu'(D,x) \equ \mu (\pi^*(D), E) \ \deq   \ \textup{sup}\{t >0\ | \ \pi^*(D)- tE \textup{ is big }\} \ .
\]
It follows from \cite{LM}*{Theorem 6.4} and the definition of $\mu'$ that $\Delta_{(E,y)}(\pi^*(D))\,\subseteq\, \RR_+\times [0,\mu']$.

Furthermore, for all $x\in X$,  we associate to the divisor $D$ the set of all infinitesimal Newton--Okounkov polygons rooted at $x$:
\[
\mathcal{K}'(D,x) \deq  \{\Delta\subseteq \RR^2_+ \mid \exists y\in E, \textup{ such that } \Delta=\Delta_{(E,y)}(\pi^*(D))\}\ .
\]

\begin{proposition}\label{prop:propinf}
With notation as above, we have 
\begin{enumerate}
\item[(i)] $\Delta_{(E,y)}(\pi^*(D))\subseteq \Delta_{\mu'(D,x)}^{-1}$ for any $y\in E$;
\item[(ii)] there exist finitely many points $y_1,\ldots ,y_k\in E$ such that the polygon $\Delta_{(E,y)}(\pi^*(D))$ is independent of 
$y\in E\setminus\{y_1,\ldots,y_k\}$, with  base  the whole line segment $[0,\mu']\times\{0\}$. 
\end{enumerate}
\end{proposition}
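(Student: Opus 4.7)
Both parts are best handled via the Lazarsfeld--Musta\c t\u a description: writing $\pi^*(D)-tE=P_t+N_t$ for the Zariski decomposition, the polygon equals $\{(t,s):\nu\leq t\leq \mu',\ \alpha_y(t)\leq s\leq\beta_y(t)\}$, with $\alpha_y(t)=\ord_y(N_t|_E)$ and $\beta_y(t)=\alpha_y(t)+(P_t\cdot E)$. The crucial observation is that the vertical thickness $\beta_y-\alpha_y=(P_t\cdot E)$ is independent of $y\in E$, so the polygon depends on $y$ only through the vertical shift $\alpha_y$. For (i) I would argue directly from valuations, bypassing this description. For any effective $\RR$-divisor $F\equiv\pi^*(D)$, decompose $F=kE+F'$ with $E\not\subseteq\Supp(F')$ and $k=\ord_E(F)\geq 0$. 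Using $\pi^*(D)\cdot E=0$ and $E^2=-1$ one computes
\[
(F'\cdot E) \equ (F\cdot E)+k \equ k,
\]
so the valuation vector $\nu_{(E,y)}(F)=(k,(F'\cdot E)_y)$ satisfies $\nu_2(F)\leq\nu_1(F)$, since a local intersection number at $y$ is bounded above by the total. Passing to the closure yields $\Delta_{(E,y)}(\pi^*(D))\subseteq\{s\leq t\}$, and the bound $t\leq\mu'$ is immediate from the L--M description.

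\textbf{Part (ii).} Apply \cite{BKS}*{Theorem~1.1} to the compact segment $\{\pi^*(D)-tE\}_{t\in[\nu,\mu')}$ inside the big cone: only finitely many Zariski chambers are crossed, so there is a finite list $C_1,\dots,C_s$ of irreducible curves with $\Supp(N_t)\subseteq C_1\cup\dots\cup C_s$ for every such $t$. Each $C_j\neq E$ meets $E$ in finitely many points; collect all of them into $\{y_1,\dots,y_k\}$, a finite subset of $E$. For any $y\in E\setminus\{y_1,\dots,y_k\}$ no non-$E$ component of $N_t$ passes through $y$, so $\alpha_y(t)\equiv 0$ on $[\nu,\mu')$, and combining with the $y$-independence of $\beta_y-\alpha_y$ the polygon reduces to
\[
\Delta_{(E,y)}(\pi^*(D)) \equ \{(t,s):\nu\leq t\leq\mu',\ 0\leq s\leq(P_t\cdot E)\},
\]
which is manifestly independent of $y$ and has base $[\nu,\mu']\times\{0\}$. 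To identify this with $[0,\mu']\times\{0\}$ one checks $\nu=0$: in the setting of this section (where the hypothesis $x\notin\Neg(D)$ is the operative one for the moving Seshadri constants studied below), Remark~\ref{rem:ZD of pullback} gives $N(\pi^*(D))=\pi^*(N_D)$, which carries no $E$-component, so $\nu=0$.

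\textbf{Main obstacle.} The delicate point is the endpoint at $t=\mu'$, where $\pi^*(D)-tE$ exits the big cone and the BKS chamber description no longer applies directly. To extend the finite list $\{C_j\}$ up to and including $t=\mu'$ one invokes the continuity of Zariski decomposition along the segment from \cite{KLM}*{Proposition~2.1}, which ensures that the limiting divisor $N_{\mu'}$ is also supported on the same finite collection of curves and that $(P_t\cdot E)$ has the correct limit. Once this is in place, showing that the base of the generic polygon extends all the way to $(\mu',0)$ (and to $(0,0)$, via the $\nu=0$ discussion above) becomes bookkeeping with the finitely many intersection points of the $C_j$'s with $E$.
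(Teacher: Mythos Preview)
Your argument for (i) is correct and in fact cleaner than the paper's. The paper reduces to integral divisors, identifies sections of $\pi^*(D)$ with pullbacks via Zariski's main theorem, and then carries out an explicit local-coordinate computation on the blow-up to see that the order along a tangent direction cannot exceed the multiplicity at the point. You bypass all of this: working directly on $X'$ with any effective $F\equiv\pi^*(D)$, the single identity $(F-kE)\cdot E=-kE^2=k$ (from $\pi^*(D)\cdot E=0$) bounds the total intersection $F'\cdot E$, hence any local contribution at $y$. This is more economical and handles $\RR$-divisors without the reduction step.

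For (ii) the paper takes a shorter path. Rather than invoking the BKS chamber structure on $[\nu',\mu')$ and then patching the endpoint, it uses only the monotonicity of $t\mapsto N_t$ from \cite{KLM}*{Proposition~2.1}: since each $N_{\mu'}-N_t$ is effective, every $N_t$ is supported on $\Supp(N_{\mu'})$, and the finite exceptional set is simply $\Supp(N_{\mu'})\cap E$. Your detour through \cite{BKS} is not wrong, but note that local finiteness of chambers does not by itself give a finite list along a half-open segment approaching the boundary of the big cone; it is precisely the monotonicity (not merely the ``continuity'') from \cite{KLM} that closes this gap, and once you use it the BKS step is redundant. Your observation that the base being $[0,\mu']\times\{0\}$ rather than $[\nu',\mu']\times\{0\}$ requires $x\notin\Neg(D)$ is well taken; the paper's proof in fact only establishes $[\nu',\mu']\times\{0\}$.
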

\begin{remark}
The constant $\mu'(D';x)$ can be computed on $X$. If $|V|$ is a linear series on $X$, define 
\[
\overline{\mult}_x(|V|) \deq  \sup \{ \mult_x(F) | F\in |V|\}\ .
\]
If $D$ is a big Cartier divisor, set 
\[
\overline{\mult}_x(||D||) \deq  \limsup_{m\rightarrow \infty}\frac{\overline{\mult}_x(|pD|)}{p} \ .
\]
Then $\mu' \equ \overline{\mult}_x(||D||)$, by simple properties of the multiplicity (cf. \cite{DKMS}*{Proposition 3.2}). 
The same holds whenever $D$ is a big $\QQ$-divisor and by continuity the statement extends  to $\RR$-divisors.
\end{remark}
\begin{proof}
$(i)$ Based on the second part of the proof of Lemma~\ref{lem:positive}, it is not hard to see that it suffices  to show the statement when $D$ is merely a big Cartier divisor. It was pointed out above that $\Delta_{(E,y)}(\pi^*(D))\subseteq \RR\times [0,\mu'(D,x)]$. Thus, it remains to show that $\Delta_{(E,y)}(\pi^*(D))$ lies below the diagonal $y=t$. 

By Zariski's main theorem (see \cite{H}*{Theorem III.11.4}) one has the  isomorphisms
\[
H^0(X',\sO_{X'}(m\cdot\pi^*(D))) \ \simeq \ H^0(X,\sO_X(mD)) \ \ \text{for all $m>0$.}
\]
Hence all the sections of $\pi^*(D)$ can be seen as  pull-backs of sections from $X$. Let $D'\in |mD|$ for some $m>0$. In order to end the proof 
it is sufficient to check  the  inequality
\[
\mult_x(D') \equ \ord_E(\pi^*(D')) \ \geq \ \ord_y\big(\big(\pi^*(D')-\mult_x(D')E\big)|_E\big)\ .
\]
The main reason for this relation is that the multiplicity of a tangent direction of a given curve at $x$ cannot exceed the multiplicity at this point. This can in fact be checked locally: let $\{u_1,u_2\}$ be a local system of parameters in a neighborhood $U\subseteq X$ of the point $x$. Then a section $s\in H^0(X,\sO_X(mD))$ restricted to $U$  can be  written in terms of the local 
coordinates $u_1$ and $u_2$ as 
\[
s|_U \equ f_d(u_1,u_2)+f_{d+1}(u_1,u_2) \ + \ \ldots \ + \ f_{d+l}(u_1,u_2)\ ,
\]
where each $f_i$ is a homogeneous polynomial of degree $i$ with $d=\mult_x(s)$. Since we are working over the complex numbers and the polynomial $f_d$ is homogeneous, we can write it as follows 
\[
f_d(u_1,u_2) \equ  (u_1-\alpha_1u_2)^{i_1}\cdot (u_1-\alpha_2u_2)^{i_2}\cdot \ldots \cdot (u_1-\alpha_ku_2)^{i_k}
\]
where $i_j\in\NN$, $\alpha_i\in \CC$, and $\sum_{j=1}^{j=k}i_j=d$. 

Now, let 
\[
\pi|_U \colon U'\deq\{u_1v_1=u_2v_2\}\subseteq U\times \PP^1\ \longrightarrow  \ U
\]
be the blow-up of $U$ at $x$. The form of the decomposition of $f_d$ implies that it is enough to do the computations on the open subset
$U_1'=\{v_1=1\}\subseteq U'$. Then 
\[
\pi^*(s)|_{U_1'} \equ  u_2^d\cdot \Big(f_d(v_2,1)\ + \ u_2f_{d+1}(v_2,1)\ + \ \ldots \ + \ u_2^lf_{d+l}(v_2,1)\Big) \deq u_2^d\cdot F(u_2,v_2)\ .
\]
The first term of the right-hand side yields $\ord_E(\pi^*(s))=d$. The shape of the second one gives us 
\begin{displaymath}
\nu_2(\pi^*(s)) = \textup{ord}_{[1:\alpha]} (F(0,v_2)) \equ  \left\{ \begin{array}{ll}
i_j & \textrm{if $\alpha=\alpha_j$, for some  $i=1,\ldots ,l$}\ ,\\
0 & \textrm{otherwise} \ .
\end{array} \right.
\end{displaymath}
Since $d\geq i_j$ for all $1\leq j\leq k$ by construction, we are done.

$(ii)$ Let
\[
D'_t \deq  \pi^*(D)-tE =  P_t'+N_t'
\]
be the appropriate Zariski decomposition. Neither of the coefficient $\nu'$ of $E$ in the negative part $N_0'$ and $\mu'(D,x)$  depend on the choice of  $y\in E$, hence $\Delta_{(E,y)}(\pi^*(D))\subseteq [\nu',\mu']\times \RR_+$ for any $y\in E$. 

By Remark~\ref{rem:vertical}, for any $\xi\in [\nu',\mu']$ the length of the vertical slice $\Delta_{(E,y)}(\pi^*(D))_{t=\xi}$ 
is independent of  $y\in E$. Thus, to finish up, it suffices  to show that there exists finitely many points $y_1,\ldots ,y_k\in E$ 
such that $[\nu',\mu']\times\{0\}\subseteq \Delta_{(E,y)}(\pi^*(D))$ for all $y\in E\setminus\{y_1,\ldots ,y_k\}$. 

This, however,  is a consequence of \cite{KLM}*{Proposition 2.1} which states that the function $t\in [\nu',\mu'] \longrightarrow N_t'$ is increasing,
i.e. if $\nu'\leq t_1\leq t_2\leq \mu'$ then the divisor $N_{t_2}'-N_{t_1}'$ is effective. In particular, the divisor $N_{\mu'}'-N_{t}'$ is effective 
for any $t\in [\nu',\mu']$. Consequently,  the function $\alpha$ is identically zero  on the whole interval $[\nu',\mu']$ 
whenever $x\notin\textup{Supp}(N_{\mu'}')\cap E$, as stated.
\end{proof}

It makes now good sense to  introduce the following definition:
\begin{definition}
With notation as above, for a big $\RR$-divisor $D$ on $X$, we call the polygon $\Delta_{(E,y)}(\pi^*(D))$,  where the point $y\in E$ is chosen 
to be general, \emph{the generic infinitesimal Newton--Okounkov polygon}\footnote{In \cite{LM} this was originally named the 
infinitesimal Okounkov body.} of $D$ at $x$. We denote this polygon by $\Delta(D,x)$.
\end{definition}

The set of polygons $\mathcal{K}'(D,x)$ as we have seen above is finite. Furthermore, by \cite{LM}*{Theorem A}, we also know that all 
the polygons in this set have the same area  equal to $\textup{vol}_X(D)$. Whence  it is natural to ask what other data remains invariant 
for all  polygons in the finite set $\mathcal{K}'(D,x)$, thus giving rise to  natural invariants of $D$ and the point $x$. 

\begin{proposition}\label{prop:t-axis}
The set of all  $t$-coordinates of the vertices of the infinitesimal Newton--Okounkov polygon $\Delta_{(E,y)}(\pi^*(D))$ does not depend on $y$.
\end{proposition}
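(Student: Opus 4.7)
The plan is to reduce the question to a statement about the $y$-independent function $t\mapsto (P'_t\cdot E)$, where $\pi^*(D)-tE \equ P'_t+N'_t$ denotes the Zariski decomposition on $X'$. By \cite{LM}*{Theorem~6.4}, the infinitesimal polygon admits the description
\[
\Delta_{(E,y)}(\pi^*(D)) \equ \{(t,s)\in\RR^2_+ \mid \nu'\leq t\leq \mu',\ \alpha_y(t)\leq s\leq \beta_y(t)\},
\]
with $\alpha_y(t)\equ\ord_y(N'_t|_E)$ and $\beta_y(t)\equ\alpha_y(t)+(P'_t\cdot E)$. The first observation is that $\nu'$, $\mu'$, and the difference $\beta_y-\alpha_y\equ (P'_t\cdot E)$ are read off from the Zariski decomposition of $\pi^*(D)-tE$ alone, hence are manifestly independent of $y$ (compare Remark~\ref{rem:vertical}).

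Next, I would invoke the convexity of the polygon, together with the piecewise linearity established in \cite{KLM}, to record that $\alpha_y$ is convex and $\beta_y$ concave on $[\nu',\mu']$ as piecewise linear functions, with potential breakpoints confined to the finite set of $t$-values at which the support of $N'_t$ changes. The $t$-coordinates of the vertices of the polygon then form the set $\{\nu',\mu'\}\cup B_y$, where $B_y$ is the union of the breakpoint sets of $\alpha_y$ and $\beta_y$. The heart of the argument is to identify $B_y$ with the breakpoint set $B$ of the single function $t\mapsto (P'_t\cdot E)$, which is $y$-independent. The inclusion $B\subseteq B_y$ is immediate: if both $\alpha_y$ and $\beta_y$ are affine near some $t_0$, then so is their difference. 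For the reverse, if $\alpha_y$ has a breakpoint at $t_0$, convexity forces its slope to strictly increase there, while concavity forces that of $\beta_y$ to be non-increasing; consequently the slope of $\beta_y-\alpha_y$ strictly decreases at $t_0$, so $t_0\in B$. The case of a breakpoint of $\beta_y$ is symmetric.

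The step requiring the most care is the forward inclusion $B_y\subseteq B$: a priori one might worry that the slope jumps of $\alpha_y$ and $\beta_y$ could cancel exactly in their difference, leaving $\beta_y-\alpha_y$ affine at a point where both $\alpha_y$ and $\beta_y$ have breakpoints. The sign constraints imposed by the convexity of $\alpha_y$ and concavity of $\beta_y$ rule this out, since a nontrivial slope change of $\alpha_y$ is strictly positive while that of $\beta_y$ is non-positive, so they cannot cancel. Once this is in place, the set of $t$-coordinates of the vertices equals $\{\nu',\mu'\}\cup B$, determined purely by the Zariski decomposition of $\pi^*(D)-tE$ on $X'$, which is exactly the desired independence from $y$.
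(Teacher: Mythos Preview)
Your proof is correct and follows essentially the same strategy as the paper: both arguments exploit the $y$-independence of $\beta_y-\alpha_y=(P'_t\cdot E)$ together with the convexity of $\alpha_y$ and concavity of $\beta_y$ to conclude that the breakpoints of $\alpha_y$ and $\beta_y$ coincide with those of their difference. The paper phrases this as a comparison with the generic infinitesimal polygon $\Delta(D,x)$ (where $\alpha\equiv 0$, so its vertex $t$-coordinates are exactly the breakpoints of $(P'_t\cdot E)$), whereas you work directly with the breakpoint set $B$ of $t\mapsto (P'_t\cdot E)$; your slope-change argument is a bit more explicit than the paper's ``affine transformation'' step, but the content is the same.
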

\begin{remark}
As we shall see in Theorem~\ref{thm:triangle} and Theorem~\ref{thm:main2} below, whenever $x\notin \Null(D)$, then the origin $(0,0)$, $(\epsilon,\epsilon)$ and $(\epsilon,0)$ are all vertices of the polygon $\Delta_{(E,y)}(\pi^*(D))$ for any $y\in E$, where $\epsilon = \epsilon(||D||,x)$ is the moving Seshadri constant.
\end{remark}
\begin{proof}
The main idea for the proof is that the function $\alpha$, defining the lower bound of the Newton--Okounkov polygon, is increasing and concave-up and $\beta$, defining the upper bound, is concave-down. So, let $t_0=\nu'< t_1<\ldots < t_{k-1}< t_k=\mu'$ be the sequence of the $t$-coordinates of all the vertices of the generic infinitesimal Newton--Okounkov polygon $\Delta(D,x)$. By Proposition~\ref{prop:propinf}, these coordinates come from the vertices sitting on the upper boundary defined by the function $\beta$.

So, let $\Delta$ be another infinitesimal Newton--Okounkov polygon that is not equal to $\Delta(D,x)$. Suppose there is an intermediate point $t'\in (t_i,t_{i+1})$, for some $i=1,\ldots ,k$, which is the $t$-coordinate of some vertex on $\Delta$. Assume first that this vertex is on the lower bound of this polygon. By Remark~\ref{rem:vertical} we know that for any $t''\in[\nu',\mu']$ the length of the vertical segment $\Delta_{(E,y)}(\pi^*(D))_{t=t''}$ does not depend on $y$. Furthermore the upper bound of $\Delta(D,x)$ is a straight segment in a neighborhood of the line $t=t'$. These two facts force the upper bound of $\Delta$ in a neighborhood of the vertex defining the coordinate $t'$ to be concave up. This leads to a contradiction since the upper bound is actually concave down. Also, the vertex on $\Delta$ giving $t'$ cannot be either on the upper bound of this polygon. This is due to the fact that the trapezoid in $\Delta(D,x)\cap [t_i,t_{i+1}]\times\RR$ is tranformed into the shape $\Delta\cap [t_i,t_{i+1}]\times\RR$ by an affine transformation, thus inducing lines into lines. 

The same reasoning implies  that if $t_i$ is the $t$-coordinate of some vertex on $\Delta(D,x)$ then this coordinate is also the $t$-coordinate for some vertex of $\Delta$.
\end{proof}

\subsection{Local constancy of generic infinitesimal Newton--Okounkov polygons.}
In the previous subsection we have attached a generic infinitesimal Newton--Okounkov polygon $\Delta(D;x)$ to a big  $\RR$-divisor $D$ and a point $x\in X$. In light of Theorem~\ref{thm:main0} it is then natural  
to study how $\Delta(D;x)$ varies when the point $x\in X$ moves around.

\begin{theorem}\label{thm:locinf}
Let  $D$ be a big $\RR$-divisor on a smooth projective surface $X$.
Then there exists a  subset $F'=\cup_{m\in\NN}F_m'\subseteq X$ consisting of a countable union of Zariski-closed proper subsets 
$F_m'\subsetneq X$ such that the polygon $\Delta(D,x)\subseteq \RR^2$ is independent of $x\in X\setminus F'$.
\end{theorem}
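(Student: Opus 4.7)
The plan is to imitate the strategy of Theorem~\ref{thm:main0} after recording $\Delta(D,x)$ as a triple of numerical data. For any $x\in X$, write $\pi_x\colon X'_x\to X$ for the blow-up with exceptional divisor $E_x$. By Proposition~\ref{prop:propinf}(ii) and the Lazarsfeld--Musta\c t\u a description,
\[
\Delta(D,x) \equ \{(t,y)\in\RR^2_+\mid \nu'(D,x)\leq t\leq \mu'(D,x),\ 0\leq y\leq \beta_x(t)\},
\]
where $\beta_x(t)=(P'_t(x)\cdot E_x)$ arises from the Zariski decomposition $\pi_x^{*}D-tE_x=P'_t(x)+N'_t(x)$ on $X'_x$. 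I will therefore aim to produce a countable union $F'\subseteq X$ of proper closed subvarieties outside of which each of $\nu'(D,x)$, $\mu'(D,x)$, and the function $\beta_x(\cdot)$ is independent of $x$.

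The endpoints should be straightforward. For every $m\geq 1$ the function $x\mapsto \overline{\mult}_x(|mD|)$ is upper-semicontinuous, hence attains its minimum on a Zariski-open $U_m\subseteq X$; on $\bigcap_m U_m$ the asymptotic invariant $\mu'(D,x)=\overline{\mult}_x(\|D\|)$ is constant. For the left endpoint I will insist $x\notin \Neg(D)$, after which Remark~\ref{rem:ZD of pullback} immediately gives $\nu'(D,x)=0$.

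The bulk of the argument goes into the invariance of $\beta_x$. For any rational $t\in[0,\mu'(D,x)]$ every irreducible component of $N'_t(x)$ is either $E_x$ or the strict transform of an irreducible curve on $X$ passing through $x$. Local finiteness of the Zariski chamber decomposition (\cite{BKS}*{Theorem~1.1}) together with the countability of such $t$ guarantees that only countably many classes $\gamma\in \textup{N}^1(X)$ ever arise this way. For each such $\gamma$ I distinguish two cases: either no irreducible curve in $\gamma$ passes through a very general point of $X$ (so the class plays no role at such $x$), or the curves in $\gamma$ form a dominant family, in which case the normal-cycle construction of Remark~\ref{rmk:normal cycles} yields a curve $\bar\Gamma(\gamma,x)$ of class $\gamma$ through every very general $x$, with a multiplicity $m(\gamma)$ at $x$ that is independent of $x$. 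I then let $F'$ be the union of $\Neg(D)$, of the loci $\shs$ and $\shd$ from the proof of Theorem~\ref{thm:main0}, and of the defect/singular loci attached to each of the countably many classes above.

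For $x,x'\in X\setminus F'$, the deformations afforded by Remark~\ref{rmk:normal cycles} provide an identification $\textup{N}^1(X'_x)_\RR\cong \textup{N}^1(X'_{x'})_\RR$ (via the natural decomposition $\pi^*\textup{N}^1(X)_\RR\oplus\RR[E]$) that matches the classes of the irreducible components of $N'_t(x)$ and $N'_t(x')$ together with all their pairwise intersection numbers: each strict-transform class reads $\pi^*\gamma_i-m(\gamma_i)[E]$ with the same $m(\gamma_i)$ on both sides, and $E_x^2=E_{x'}^2=-1$. Uniqueness of Zariski decomposition then forces $[P'_t(x)]=[P'_t(x')]$, and hence $\beta_x(t)=\beta_{x'}(t)$ for every $t\in\QQ\cap[0,\mu']$; continuity of Zariski decomposition inside the big cone (\cite{BKS}*{Proposition~1.14}) upgrades this to the full interval, as needed. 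The main obstacle I anticipate is precisely the book-keeping in the previous paragraph: proving rigorously that, for very general $x$, every component of $N'_t(x)$ arises from a universal moving family whose class and generic multiplicity at $x$ are dictated only by numerical data on $X$. This is the point at which the input of \cite{K}*{Chapter~2} is essential.
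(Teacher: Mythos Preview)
Your approach is genuinely different from the paper's and, as written, has a real gap at the step where you invoke uniqueness of Zariski decomposition.

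\textbf{The gap.} The sentence ``Uniqueness of Zariski decomposition then forces $[P'_t(x)]=[P'_t(x')]$'' does not follow from what you have set up. Uniqueness of Zariski decomposition is a statement on a single surface. To transport the decomposition from $X'_x$ to $X'_{x'}$ via your identification $N^1(X'_x)_\RR\cong N^1(X'_{x'})_\RR$, you must verify that the image of $P'_t(x)$ is \emph{nef} on $X'_{x'}$. Nefness is tested against \emph{every} irreducible curve on $X'_{x'}$, not just the finitely many strict transforms you have matched; equivalently, you need the cones of curves of $X'_x$ and $X'_{x'}$ to correspond. Your bookkeeping only tracks the countably many classes $\gamma$ that occur in some $N'_t$ for some $x$, and this is circular: you are assuming that the support of $N'_t(x')$ consists of the curves $\bar\Gamma(\gamma_i,x')$, which is exactly what is in question. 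There is also a looseness in the symbol $m(\gamma)$: distinct irreducible curves of the same class $\gamma$ through $x$ can have different multiplicities at $x$, hence different strict-transform classes on $X'_x$, so ``the'' multiplicity $m(\gamma)$ is not well-defined from the numerical class alone. (A minor point: your treatment of $\mu'(D,x)$ via $\overline{\mult}_x(|mD|)$ presupposes $D$ integral; you should reduce to that case first.)

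\textbf{What the paper does instead.} The paper avoids comparing Zariski decompositions altogether. It passes to the universal blow-up $\pi\colon Y=\Bl_{\Delta_X}(X\times X)\to X\times X$ with exceptional divisor $E_X$, views $\pi_1\colon Y\to X$ as the family whose fibre over $x$ is $\Bl_x(X)$, pulls $D$ back via the second projection, and chooses the partial flag $Y\supset E_X\supset Y_2$ where $Y_2$ is any section of $E_X\to\Delta_X$. One then applies \cite{LM}*{Theorem~5.1} directly to this flat family to conclude that $\Delta_{(E_x,y_x)}(\pi_x^*D)$ is independent of $x$ outside a countable union of proper closed subsets, where $y_x=Y_2\cap\pi_1^{-1}(x)$. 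If this polygon already sits on the $t$-axis it is $\Delta(D,x)$ and one is done; otherwise, Proposition~\ref{prop:t-axis} (the $t$-coordinates of the vertices are independent of $y\in E$) together with Remark~\ref{rem:vertical} (vertical slice lengths are independent of $y$) lets one reconstruct $\Delta(D,x)$ from $\Delta_{(E_x,y_x)}(\pi_x^*D)$, and constancy follows. This sidesteps entirely the problem of matching nef cones across different blow-ups.
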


\begin{remark}
Suppose $A$ is an ample Cartier divisor on $X$. Proposition~\ref{prop:genericinf} below says that the polygon $\Delta(A,x)$, as explained in Theorem~\ref{thm:locinf} does not depend on $x$ for very general choices, is contained in an area determined by the global Seshadri constant $\epsilon(A)=\sup \{\epsilon(A,x)|x\in X\}$.
\end{remark}

\begin{proof}
By an argument similar to the  second part of proof of Lemma~\ref{lem:positive}, we can assume without loss of generality that 
$D$ is a big Cartier divisor. 
 
Denote by $p_1,p_2:X\times X\rightarrow X$ the respective projections onto the first and the second factors, let 
$\Delta_{X}\subseteq X\times X$ be the diagonal. Write  $\pi:Y\deq \textup{Bl}_{\Delta_X}(X\times X)\rightarrow X\times X$ for  
the blow-up along the diagonal with exceptional divisor  $E_X\subseteq Y$, and projection morphisms $\pi_1,\pi_2:Y\rightarrow X$. 

We will study  the family $\pi_1:Y\rightarrow X$, which has the property that for $x\in X$ the fiber $\pi_1^{-1}(x)=\textup{Bl}_x(X)$ is 
the blow-up of $X$ at $x$. Let $\mathcal{D}=\pi_2^*(D)$ and notice that $\mathcal{D}|_{\pi^{-1}_1(x)}=\pi_x^*(D)$, 
where $\pi_x=\pi|_{\pi^{-1}(x\times X)}:\textup{Bl}_x(X)\rightarrow X$. Consider  the incomplete flag 
\[
Y_0=Y\supseteq Y_1=E_X\supseteq Y_2 \ ,
\]
where $Y_2$ is defined as follows: because the diagonal in $X\times X$ is smooth, $E_X$ is a projective bundle over $\Delta_X$; now let  $Y_2$
be an arbitrary section of $E_X\to \Delta_X$. Then $Y_2$ is a section of  $\pi_1$. Denote by $E_x\deq E_X\cap \pi^{-1}_1(x)$, the exceptional divisor of the map $\pi_x$, and by $y_{x}=Y_2\cap\pi^{-1}_1(x)\in E_x$.  

Thus the goal is to understand  the family of Newton--Okounkov polygons $\Delta_{(E_x,y_{x})}(\pi^*_x(D))$ for $x\in X$. Applying 
\cite{LM}*{Theorem 5.1} to the flat family $\pi_1:Y\rightarrow X$, one deduces that there exists a countable family $F'=\cup_{m\in\NN}F'_m\subseteq X$, where each $F'_m\subseteq X$ is a proper Zariski closed subvariety, satisfying the property that 
\[
\Delta_{(E_x,y_{x})}(\pi_x^*(D)) \ \subseteq \ \RR^2 \text{ is independent of $x$ for $x\in X\setminus F'$.}
\]
Let $[\nu',\mu']$ be the support on the $t$-axis of the polygon $\Delta\deq \Delta_{(E_x,y_{x})}(\pi_x^*(D))$ for $x\in X\setminus F'$. 
If $[\nu',\mu']\times\{0\}\subseteq \Delta$, then, by Proposition~\ref{prop:propinf}, the set $\Delta_{(E_x,y_{x})}(\pi_x^*(D))$ is the 
generic infinitesimal Newton--Okounkov polygon of $D$ at $x$ for any $x\in X\setminus F'$. 

Otherwise, by Proposition~\ref{prop:t-axis}, we know that the $t$-coordinates of the vertices of $\Delta$ are the $t$-coordinates of the generic infinitesimal Newton--Okounkov polygon $\Delta(D,x)$ for any $x\in X\setminus F'$. 

Furthermore, by Remark~\ref{rem:vertical} for any $t\in [\nu',\mu']$ the length of the vertical segment $\Delta\cap\{t\}\times\RR$ is equal to the length of the vertical segment $\Delta(D,x)$ for any $x\in X\setminus F'$. These two latter facts imply that $\Delta(D,x)\subseteq \RR^2$ is independent  of $x\in X\setminus F'$ which concludes the proof.
\end{proof}

\subsection{Infinitesimal Newton--Okounkov polygons and base loci.}
In this subsection, we are trying to explore how certain ideas of Nakamaye (see \cite{Nak1}), connecting augmented base loci and blow-ups can be seen in the language of infinitesimal Newton--Okounkov polygons. 

Returning  to Example~\ref{ex:interesting}, we remark that the infinitesimal Newton--Okounkov considered there does not contain a triangle of the form the $\Delta_{\xi}^{-1}$ for some $\xi>0$. Notice also that the base point was taken to be contained in the null locus. These observations  lead to our first goal, namely, to find  conditions under which all infinitesimal Newton--Okounkov polygons contain a triangle $\Delta_{\xi}^{-1}$ for some $\xi>0$. We shall see below that this information suffices to describe the complement of the null locus. 

Furthermore, we discuss here how the points of the negative locus  can be read from infinitesimal data. 
This kind of connection has not been looked at before and completes the picture that started in \cite{Nak1} in a clean  way. 

\begin{theorem}\label{thm:triangle}
Let  $D$ be  a big $\RR$-divisor on a smooth projective surface $X$. Then 
\begin{enumerate}
\item $x\notin \Neg(D)$ if and only if $(0,0)\in\Delta_{(E,y)}(\pi^*(D))$ for any $y\in E$,
\item $x\notin \Null(D)$ if and only if there exists $\xi>0$ such that $\Delta_{\xi}^{-1}\subseteq\Delta_{(E,y)}(\pi^*(D))$ for any $y\in E$.
\end{enumerate}
\end{theorem}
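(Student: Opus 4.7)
The plan is to deduce both parts from Theorem~\ref{thm:main1} applied on the blow-up $X'$ to $\pi^*D$ with the admissible flag $(E,y)$, using the compatibility of Zariski decomposition with $\pi$ provided by Remark~\ref{rem:ZD of pullback}.

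For part (1), Theorem~\ref{thm:main1}(i) applied to $\pi^*D$ and $(E,y)$ reformulates the containment $(0,0)\in\Delta_{(E,y)}(\pi^*D)$ as $y\notin\Neg(\pi^*D)$. Hence the task reduces to the equivalence $x\notin\Neg(D)\Leftrightarrow E\cap\Neg(\pi^*D)=\varnothing$. The forward direction is immediate from Remark~\ref{rem:ZD of pullback}: the Zariski decomposition of $\pi^*D$ reads $\pi^*P_D+\pi^*N_D$, whose negative part is supported on strict transforms of components of $N_D$, all missing $E$. For the converse I write the Zariski decomposition on $X'$ as $\pi^*D=Q+M$; the hypothesis forces the components of $M$ to be strict transforms $\widetilde{C_i}$ of curves $C_i\subseteq X$ avoiding $x$. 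Pushing forward, $\pi_*Q$ remains nef (push-forward of a nef divisor under a birational morphism), $\pi_*M=\sum b_iC_i$ inherits the negative definite intersection matrix of $M$, and orthogonality transfers via $(\pi_*Q\cdot C_i)=(Q\cdot\widetilde{C_i})=0$. Uniqueness of Zariski decomposition then forces $N_D=\pi_*M$, so $x\notin\Neg(D)$.

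For part (2), I first reduce to $D=P_D$ big and nef. Since $x\notin\Null(D)$ implies $x\notin\Neg(D)$, part (1) gives $E\cap\Neg(\pi^*D)=\varnothing$, so Lemma~\ref{lem:positive} applied on $X'$ together with Remark~\ref{rem:ZD of pullback} yields $\Delta_{(E,y)}(\pi^*D)=\Delta_{(E,y)}(\pi^*P_D)$; moreover $\Null(D)=\Null(P_D)$. With $D$ big and nef, for ``$\Rightarrow$'' the assumption $x\notin\Null(D)=\textup{\textbf{B}}_+(D)$ provides a Kodaira-type splitting $D\equiv A+F$ with $A$ ample and $F$ effective avoiding $x$, producing the positive bound $\epsilon(D;x)\geq\epsilon(A;x)>0$. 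Setting $\xi=\epsilon(D;x)$, for every $t\in[0,\xi]$ the divisor $\pi^*D-tE$ is nef, so $N_t=0$, $\alpha(t)\equiv 0$ regardless of $y$, and $\beta(t)=(\pi^*D-tE)\cdot E=t$. The convex polygon $\Delta_{(E,y)}(\pi^*D)$ therefore contains the vertices $(0,0)$, $(\xi,0)$ and $(\xi,\xi)$, hence by convexity the full triangle $\Delta_\xi^{-1}$, for every $y\in E$.

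For the converse in (2), assume $x\in\Null(D)$. If $x\in\Neg(D)$, part (1) already contradicts $(0,0)\in\Delta_\xi^{-1}\subseteq\Delta_{(E,y)}(\pi^*D)$. Otherwise, reducing again to $D=P_D$ big and nef, pick an irreducible curve $C\ni x$ with $(D\cdot C)=0$. For any $y_0\in\widetilde{C}\cap E$, the calculation $(\pi^*D-tE)\cdot\widetilde{C}=-t\cdot\mult_x(C)$ is strictly negative for $t>0$, forcing $\widetilde{C}$ into $\Neg(\pi^*D-tE)$ with a positive coefficient $a(t)$. Consequently $\alpha(t)\geq a(t)\cdot(\widetilde{C}\cdot E)_{y_0}>0$ at $y_0$ for every $t>0$, so $(t,0)\notin\Delta_{(E,y_0)}(\pi^*D)$, contradicting the hypothesized inclusion of $\Delta_\xi^{-1}$. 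The main obstacles I anticipate are the converse in part (1), which requires verifying that the push-forward of a Zariski decomposition on $X'$ is again one on $X$ (relying on both orthogonality and the negative-definiteness of the support), and the positivity $\epsilon(D;x)>0$ on the complement of $\Null(D)=\textup{\textbf{B}}_+(D)$ used in the forward direction of (2), which depends on the Kodaira-type splitting $D\equiv A+F$.
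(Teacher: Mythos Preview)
Your proof is correct, but it diverges from the paper's argument in two places, and the comparison is instructive.

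For the converse in part~(1), the paper argues section-theoretically: if $x\in\Neg(D)$ lies on a component $C$ of $N_D$ with coefficient $a>0$, then every $D'\in|mD|$ satisfies $\mult_C(D')\geq ma$, whence $\nu_1(\pi^*D')\geq ma\cdot\mult_x(C)>0$ and $(0,0)$ cannot lie in the polygon. Your push-forward argument is more structural and arguably cleaner: it shows directly that $\Neg(\pi^*D)\cap E=\varnothing$ forces $N_D=\pi_*M$ to miss $x$. The verification you flag as an obstacle does go through: projection formula gives nefness of $\pi_*Q$, and since the components of $M$ avoid $E$ their intersection numbers coincide with those of their images, so negative-definiteness and orthogonality transfer.

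For the forward direction in part~(2), the paper stays internal to its own machinery: it takes two curves $C_1,C_2$ meeting transversally at $x$, uses Theorem~\ref{thm:main1} and Corollary~\ref{cor:valpoints} to produce effective divisors $D_i\equiv D$ of the form $\xi C_i+D_i'$ with $x\notin\Supp(D_i')$, and then reads off the valuation vectors $(\xi,0)$ and $(\xi,\xi)$ on $X'$. Your route via the Seshadri constant is more direct and essentially anticipates one half of Proposition~\ref{prop:bignef}: once $\pi^*D-tE$ is nef for $t\in[0,\xi]$, the slice description gives $\alpha(t)=0$, $\beta(t)=t$, hence $\Delta_\xi^{-1}\subseteq\Delta_{(E,y)}(\pi^*D)$. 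The cost is the external input $\Null(D)=\textbf{B}_+(D)$ and the Kodaira splitting $D\equiv A+F$ to secure $\epsilon(D;x)>0$; alternatively you could cite Lemma~\ref{lem:kodaira} directly, which produces exactly such a decomposition with $\Supp(F)\subseteq\Null(D)$.

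Your converse in part~(2) is essentially the paper's argument specialized to a single point $y_0\in\widetilde{C}\cap E$, which suffices since the statement quantifies over all $y\in E$.
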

\begin{remark}
Theorem~\ref{thm:triangle} seems to imply that in order to check whether  a point is contained in the negative or the null locus one needs to know all  infinitesimal Newton--Okounkov polygons at the point. Analogously to Theorem~\ref{thm:main1}, we prove in Lemma~\ref{lemma:function constant} that it suffices to check  the condition above for just one point $y\in E$.
\end{remark}

\begin{proof}
$(1)$ Based on the ideas from the second part of the proof of Lemma~\ref{lem:positive}, it is not hard to see that it suffices 
to check  the statement in  the case when $D$ is a big $\QQ$-divisor. 

Assume first  that $x\notin\Neg(D)$, and let $D=P_D+N_D$ be the corresponding Zariski decomposition. Then by Remark~\ref{rem:ZD of pullback} 
implies that $\pi^*(D)=\pi^*(P_D)+\pi^*(N_D)$ is the Zariski decomposition of $\pi^*D$. In particular, $\Neg(\pi^*(D))\cap E=\varnothing$, and 
Theorem~\ref{thm:main1} yields   $(0,0)\in\Delta_{(E,y)}(\pi^*(D))$.

For the reverse  implication suppose on the contrary  that $x\in\Neg(D)$. By scaling we can assume that $D$, $P_D$, and $N_D$ are all integral. 
Let $x\in C\subseteq X$ be an irreducible curve appearing in $N_D$ with a strictly positive coefficient $a>0$. By  \cite{PAG}*{Proposition 2.3.21}
we know that for any natural number $m>0$ and any effective divisor $D'\in |mD|$ there exists another effective divisor $P'\in |mP_D|$ for which 
$D'=P'+N_D$. Therefore, $\mult_{C}(D')\geq m\cdot a$. 

On the other hand, as the polygon $\Delta_{(E,y)}(\pi^*(D))$ is the closure of all  normalized valuation vectors  of the effective divisors $\pi^*(D')$, 
for any $D'\in|mD|$ and any $m>0$, we obtain by the above that  
\[
\nu_1(\pi^*(D')) \equ \mult_E(\pi^*(D'))\geq ma\cdot \mult_x(C)\ .
\]
Consequently, by the  definition of  the Newton--Okounkov polygons we obtain that  $(0,0)\notin\Delta_{(E,y)}(\pi^*(D))$,  contradicting
our initial assumption.

$(2)$ We start with the direct implication. Since  $x\notin\Null(D)$, we know    via Theorem~\ref{thm:main1}  that  the polygon $\Delta_{(C,x)}(D)$ contains a small standard simplex  for any flag $(C,x)$.
 
Let $C_1,C_2\subseteq X$ be two irreducible curves intersecting transversally at $x$ (in particular $x$ is a smooth point of both). Then there exists $\lambda>0$, such that $\Delta_{\lambda}$ is contained both in $\Delta_{(C_1,x)}(D)$ and $\Delta_{(C_2,x)}(D)$. Now, for any given real number $0<\xi<\lambda$, the point $(\xi,0)$ is a valuative point of $D$ with respect to both flags $(C_1,x)$ and $(C_2,x)$ according to Corollary~\ref{cor:valpoints}, as pointed out in Remark~\ref{rem:real}. In particular, there exist effective $\RR$-divisors $D_1$ and $D_2$, both numerically equivalent to $D$, such that $D_i = \xi C_i+D_i'$,  where $x\notin \textup{Supp}(D_i')$, for any $i=1,2$. Since each $C_i$ is smooth at $x$, we have $\pi^*(C_i) \equ \overline{C}_i+E$, where $\overline{C}_i$ is the proper transform of $C_i$. Set $y_i=\overline{C}_i\cap E$; as  $C_1$ and $C_2$ intersect transversally at $x$, 
one has $y_1\neq y_2$. Note also that each $\pi^*(D_i)$ contributes to the Newton-=Okounkov polygon $\Delta_{(E,y)}(\pi^*(D))$.
Therefore,  Remark~\ref{rem:definition} yields that 
\begin{displaymath}
\nu_{(E,y)}(\pi^*(D_i)) = \left\{ \begin{array}{ll}
(\xi, 0) & \textrm{if $y\neq y_i$}\\
(\xi,\xi) & \textrm{if $y=y_i$}
\end{array} \right.
\end{displaymath}
for any $i=1,2$. Since $y_1\neq y_2$, we obtain  that $\Delta_{\xi}^{-1}\subseteq \Delta_{(E,y)}(\pi^*(D))$ for any $y\in E$.

For the reverse  implication, assume that there exists $\xi>0$ with $\Delta_{\xi}^{-1}\subseteq \Delta_{(E,y)}(\pi^*(D))$ for any $y\in E$. By Remark~\ref{rem:shift},  the infinitesimal polygon $\Delta_{(E,y)}\big(\pi^*(D)-tE\big)$ contains a small simplex for any real number $0<t<\xi$ and any $y\in E$. As a consequence, Theorem~\ref{thm:main1} gives that 
\begin{equation}\label{eq:null1}
\Null\big(\pi^*(D)-tE\big) \ \bigcap \ E \equ \varnothing 
\end{equation}
for any rational $0<t \ll 1$.

We intend to reduce the problem to the case when $D=P_D$ is big and nef. As $(0,0)\in\Delta_{(E,y)}(\pi^*(D))$, then $x\notin\Neg(D)$. Thus, by Remark~\ref{rem:ZD of pullback}, we know that $\pi^*(D)=\pi^*(P_D)+\pi^*(N_D)$ is the Zariski decomposition of $\pi^*(D)$. This in turn implies that  $\Supp(\pi^*(N_D))\cap E=\varnothing$. Now, by Lemma~\ref{lem:positive}, this yields that 
\[
\Delta_{(E,y)}(\pi^*(D)) \equ \Delta_{(E,y)}(\pi^*(P_D))
\]
for any $y\in E$, which allows us to reduce the problem to the big and nef case.

Assume now that $D=P_D$ is big and nef; aiming at a contradiction suppose  that there exists an irreducible curve $C\subseteq \Null(D)$ containing $x$. This implies that $(D\cdot C)=0$. If we denote by $\overline{C} \deq \pi^*(C)-\mult_x(C)E$ the proper transform of $C$, then for all $0<t\ll 1$ one has
\[
\big((\pi^*(D)-tE)\cdot \overline{C}\big) \equ  (D\cdot C)-t\cdot \textup{mult}_x(C) \ < \ 0 \ .
\] 
The algorithm for finding  Zariski decompositions then yields that $\overline{C}\dsubseteq \Neg\big(\pi^*(D)-tE\big)$, hence $\overline{C}\dsubseteq\Null(\pi^*(D)-tE)$. Since $\overline{C}\cap E\neq \varnothing$, this contradicts $(\ref{eq:null1})$, and we are done. 
\end{proof}

An interesting consequence of the above statement is the following criterion for a point not to be in the null locus of a big real divisor. 

\begin{corollary}
In the setting of Theorem~\ref{thm:triangle}, one has  $x\notin\Null(D)$ if and only if there exist  irreducible curves $C_1, C_2\subseteq X$ 
that intersect transversally at $x$, and a positive number $\lambda>0$ such that the horizontal segment $[0,\lambda]\times\{0\}$ 
is contained both in $\Delta_{(C_1,x)}(D)$ and $\Delta_{(C_2,x)}(D)$.
\end{corollary}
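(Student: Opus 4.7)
The plan is to derive both implications with essentially the same ingredients that establish Theorem~\ref{thm:triangle}(2), reorganized around flags on $X$ rather than on the blow-up.

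For the forward direction ($\Rightarrow$), I would pick any two irreducible curves $C_1, C_2 \subseteq X$ that are smooth at $x$ and meet transversally at $x$ (for instance, two smooth members of a very ample linear system through $x$ with distinct tangent directions). Since $x \notin \Null(D)$, Theorem~\ref{thm:main1}(ii) provides $\lambda_i > 0$ with $\Delta_{\lambda_i} \subseteq \Delta_{(C_i,x)}(D)$ for $i=1,2$. Taking $\lambda \deq \min(\lambda_1,\lambda_2)$ yields $[0,\lambda] \times \{0\} \subseteq \Delta_{\lambda} \subseteq \Delta_{(C_i,x)}(D)$ for both values of $i$.

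For the backward direction ($\Leftarrow$), the strategy is to show $\Delta_\xi^{-1} \subseteq \Delta_{(E,y)}(\pi^*D)$ for every $y \in E$ and some $\xi > 0$, and then invoke Theorem~\ref{thm:triangle}(2). Fix a rational $0 < \xi < \lambda$. By Corollary~\ref{cor:valpoints}(ii) (combined with Remark~\ref{rem:real} if $D$ is only an $\RR$-divisor), the point $(\xi,0)$ is valuative for each flag $(C_i,x)$, so there exist effective divisors $D_i \equiv D$ of the shape $D_i = \xi C_i + D_i'$ with $x \notin \Supp(D_i')$. Let $\overline{C_i}$ denote the strict transform of $C_i$ and set $y_i \deq \overline{C_i} \cap E$; transversality of $C_1, C_2$ at $x$ forces $y_1 \neq y_2$. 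Using $\pi^*(C_i) = \overline{C_i} + E$ (valid since $C_i$ is smooth at $x$) and computing directly as in the forward direction of Theorem~\ref{thm:triangle}(2), one finds
\[
\nu_{(E,y)}(\pi^*D_i) \equ \begin{cases}(\xi,\xi)&\text{if }y = y_i,\\ (\xi,0)&\text{if }y \neq y_i.\end{cases}
\]

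Combining the contributions of $D_1$ and $D_2$ (for each $y$, at least one index $i$ satisfies $y \neq y_i$), we obtain $(\xi,0) \in \Delta_{(E,y)}(\pi^*D)$ for every $y \in E$, together with $(\xi,\xi) \in \Delta_{(E,y_i)}(\pi^*D)$ for $i=1,2$. By Remark~\ref{rem:vertical} the length of the vertical slice at $t=\xi$ is independent of $y$; since at $y=y_1$ that slice contains both $(\xi,0)$ and $(\xi,\xi)$, the same holds for every $y \in E$. Letting $\xi \to 0$ and using closedness yields $(0,0) \in \Delta_{(E,y)}(\pi^*D)$ for all $y$, and convexity then delivers $\Delta_\xi^{-1} \subseteq \Delta_{(E,y)}(\pi^*D)$ for every $y \in E$. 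Theorem~\ref{thm:triangle}(2) concludes $x \notin \Null(D)$. The main (mild) obstacle is the transition from the valuative points attached to the specific centers $y_1, y_2$ to the inclusion of the entire inverted triangle inside \emph{every} infinitesimal polygon, and it is the constancy of vertical slice lengths in Remark~\ref{rem:vertical} that makes this passage possible.
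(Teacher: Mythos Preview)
Your approach is essentially the paper's: both directions are handled the same way, and your backward implication reproduces the valuative-point computation from the forward part of Theorem~\ref{thm:triangle}(2), then invokes that theorem to conclude. Your use of Remark~\ref{rem:vertical} to propagate the full vertical slice $\{\xi\}\times[0,\xi]$ to every $y\in E$ is in fact more explicit than the paper, which simply defers to ``the last part of the proof of the direct implication of Theorem~\ref{thm:triangle}''.

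There is one small but genuine gap. You invoke Corollary~\ref{cor:valpoints}(ii) to conclude that $(\xi,0)$ is valuative, but the hypothesis of that corollary is that a \emph{simplex} $\Delta_{\lambda,\lambda'}$ lies in $\Delta_{(C_i,x)}(D)$, whereas your assumption is only that the horizontal segment $[0,\lambda]\times\{0\}$ does. A~priori the vertical slice at $t=0$ could be a single point (this happens precisely when $C_i\subseteq\Null(D)$, which you cannot rule out since $x\notin\Null(D)$ is what you are trying to prove). The paper fills this gap by first shifting: for $\lambda'\in(0,\lambda)$, Remark~\ref{rem:shift} identifies $\Delta_{(C_i,x)}(D-\lambda'C_i)$ with the part of $\Delta_{(C_i,x)}(D)$ to the right of $t=\lambda'$, and since $\lambda'$ is an interior point of $[0,\mu]$ the vertical slice there has positive length (by concavity of $\beta-\alpha$ and bigness of $D$). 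Hence the shifted polygon genuinely contains a small simplex at the origin, Corollary~\ref{cor:valpoints}(ii) applies to give that $(0,0)$ is valuative for $D-\lambda'C_i$, and shifting back yields that $(\lambda',0)$ is valuative for $D$. Inserting this one-line observation makes your argument complete.
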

\begin{proof}
Choose a positive real number $\lambda'\in (0,\lambda)$. By Remark~\ref{rem:shift} each $\Delta_{(C_i,x)}\big(D-\lambda'C_i\big)$ contains a small simplex. By Corollary~\ref{cor:valpoints} the origin $(0,0)$ is a valuative point in each of these polygons. Applying  Remark~\ref{rem:shift} again,  we see that the point $(\lambda',0)$ is a valuative point in each $\Delta_{(C_i,x)}(D)$. Using this fact and the last part of the proof of the direct implication of Theorem~\ref{thm:triangle}, one deduces that the polygon $\Delta_{(E,y)}(\pi^*(D))$ contains the triangle $\Delta_{\lambda'}^{-1}$ for any $y\in E$. By Theorem~\ref{thm:triangle}, this implies that $y\notin\Null(D)$. 

The reverse implication is an easy consequence of Theorem~\ref{thm:main1}.
\end{proof}

\subsection{Moving Seshadri constants.}
It has been long known  (and has been illustrated in the previous section in connection with Newton--Okounkov polygons) 
that many important positivity aspects can be observed infinitesimally.  The philosophy dates back at least to Demailly's work \cite{D},
where  he introduces Seshadri constants in order to capture  the local positivity of a divisor. 

These ideas were further developed in \cite{Nak1}, where Nakamaye introduced  moving Seshadri constants, 
and then generalized to a large extent in  \cite{ELMNP2}. In fact, one of the highlights of \cite{ELMNP2} is the description of the connection 
between augmented base loci and moving Seshadri constants. 
 
The goal of the current subsection is to find a similar relation between  infinitesimal Newton--Okounkov polygons and moving Seshadri constants.

Let $D$ be a big and nef $\QQ$-divisor on a smooth projective surface $X$,  $x\in X$  a closed point. The \textit{Seshadri constant} of $D$ at $x$ 
is defined to be the non-negative real number
\[
\epsilon(D;x) \ \deq \ \underset{x\in C\subseteq X}\inf \Big\{\frac{(D.C)}{\mult_x(C)}\Big\}\ ,
\]
where the infimum is taken over all reduced irreducible curves $C\subseteq X$ passing through $x$. If $\pi:X'\rightarrow X$ denotes 
the blow-up of $X$ at  $x$, then 
\[
\epsilon(D;x) \equ \max\{\epsilon>0 | \pi^*(D)-\epsilon E \textup{ is nef}\}\ . 
\]
For basic properties of Seshadri constants and further references the reader is kindly invited to consult  \cite{PAG}*{Section 5.1}.

Moving Seshadri constants were initially introduced by Nakamaye in \cite{Nak1} with the purpose of  encoding  local positivity of the $\QQ$-divisor $D$,
when it is merely big. For nef divisors moving Seshadri constants agree with   Seshadri constants as defined above.  If $x\notin\Null(D)$, 
then the \textit{moving Seshadri constant} of $D$ at $x$ is defined to be
\[
\epsilon(||D||;x) \deq \smash{\displaystyle\sup_{f^*(D)=A+E}} \epsilon(A,f^{-1}(x))\ , 
\]
where the supremum is taken over all birational morphisms $f:X''\rightarrow X$, with $X''$ smooth  
which are isomorphisms over a neighborhood of $x$, and all decompositions $f^*(D)=A+E$, with $A$ an ample $\QQ$-divisor,
$E$ effective and $x\notin\textup{Supp}(E)$. If $x\in \Null(D)$, then we put $\epsilon(||D||,x)=0$. This invariant is continuous inside the big cone, thus it is well-defined even for big $\RR$-divisors.

Suppose that $D$ is a big $\RR$-divisor on $X$ and that $x\notin\Neg(D)$. By Theorem~\ref{thm:triangle} one can introduce the following invariant for
points $y\in E$:
\[
\xi(\pi^*(D);y) \ \deq \  \sup\{\xi \geq 0 \mid \Delta_{\xi}^{-1}\subseteq \Delta_{(E,y)}(\pi^*(D))\} \ .
\]
The main goal of this subsection is to prove the following theorem connecting moving Seshadri constants to infinitesimal Newton--Okounkov polygons.

\begin{theorem}\label{thm:main2}
Let $D$ be a big $\RR$-divisor on $X$. If $x\notin\Neg(D)$, then 
\[
\epsilon(||D||;x)  \equ \xi(\pi^*(D);y)
\]
for any closed point $y\in E=\pi^{-1}(x)$.
\end{theorem}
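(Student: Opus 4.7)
The plan is to reduce to the case where $D$ is big and nef, and then to identify both quantities with $\sup\{t \geq 0 \mid \pi^*(D) - tE \text{ is nef}\}$, i.e. with the classical Seshadri constant $\epsilon(D;x)$. For the reduction, since $x \notin \Neg(D)$, Remark~\ref{rem:ZD of pullback} gives $\pi^*(D) = \pi^*(P_D) + \pi^*(N_D)$ as the Zariski decomposition of $\pi^*(D)$, and $\Supp(\pi^*(N_D))$ is automatically disjoint from $E$; applying Lemma~\ref{lem:positive} at every $y \in E$ then yields $\xi(\pi^*(D); y) = \xi(\pi^*(P_D); y)$, while the invariance of moving Seshadri constants under adding effective divisors that avoid $x$ gives $\epsilon(||D||;x) = \epsilon(||P_D||;x) = \epsilon(P_D;x)$ (the last equality because $P_D$ is big and nef). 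By Lemma~\ref{lemma:function constant}, $\xi(\pi^*(D);y)$ is independent of $y$, so I may pick $y$ generic in the sense of Proposition~\ref{prop:propinf}(ii); the polygon is then cut out by $0 \leq y' \leq \beta(t)$ with $\beta$ concave down and bounded above by the diagonal $y' \leq t$ (Proposition~\ref{prop:propinf}(i)).

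For the inequality $\xi(\pi^*(D);y) \geq \epsilon(D;x)$, I would argue that whenever $t < \epsilon(D;x)$, the divisor $\pi^*(D) - tE$ is nef and $(\pi^*(D) - tE)^2 = D^2 - t^2 > 0$ (using $\epsilon(D;x) \leq \sqrt{D^2}$), so its Zariski decomposition is trivial; this forces $\alpha(t)=0$ and $\beta(t) = (\pi^*(D) - tE) \cdot E = t$. Taking the closure of the union of these vertical slices over $[0, \epsilon(D;x))$ gives $\Delta_{\epsilon(D;x)}^{-1} \subseteq \Delta_{(E,y)}(\pi^*(D))$.

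For the converse, I would proceed by contradiction: suppose $\xi(\pi^*(D);y) > \epsilon(D;x)$ and pick a rational $t$ strictly between them. Since $\Delta_t^{-1}$ sits inside $\Delta_{(E,y)}(\pi^*(D))$ for every $y \in E$ (Lemma~\ref{lemma:function constant}), and the polygon lies below the diagonal, the slice at $t$ equals $[0,t]$ exactly; combining with $(P_t + N_t)\cdot E = (\pi^*(D)-tE)\cdot E = t$ and the vanishing of $\alpha_y(t)$ for \emph{every} $y \in E$ forces $N_t \cdot E = 0$, excludes $E$ from $\Supp(N_t)$, and makes every component of $N_t$ disjoint from $E$. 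Consequently $N_t = \pi^*(\pi_* N_t)$ with $\pi_* N_t$ effective on $X$ and avoiding $x$, so $P_t = \pi^*(D - \pi_* N_t) - tE$ being nef (with $t > 0$) is equivalent to $D - \pi_* N_t$ being nef and $\epsilon(D - \pi_* N_t;x) \geq t$. Since $\pi_* N_t \cdot C \geq 0$ for every irreducible curve $C \subseteq X$ through $x$ (as $C \not\subseteq \Supp(\pi_* N_t)$), the Seshadri bound then gives $D \cdot C \geq (D - \pi_* N_t) \cdot C \geq t \cdot \mult_x C$, whence $\epsilon(D;x) \geq t$, contradicting the choice of $t$.

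The main obstacle I expect is the bookkeeping that rules out $E \subseteq \Supp(N_t)$ in the critical range: this relies on using Lemma~\ref{lemma:function constant} in the strong form that $\alpha_y(t) = 0$ for every $y \in E$ (not merely generic $y$), and on the correct interpretation of the Lazarsfeld--Musta\c t\u a formulas for $\alpha$ and $\beta$ when the flag curve itself could a priori appear in the negative part.
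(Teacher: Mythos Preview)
Your treatment of the big and nef case is essentially the paper's Proposition~\ref{prop:bignef}. The inequality $\epsilon(D;x)\leq \xi(D;x)$ is proved identically. For the converse the paper argues slightly more directly: once you know $\Neg(\pi^*(D)-tE)\cap E=\varnothing$ (this is exactly what $\alpha_y(t)=0$ for all $y$ says, via Theorem~\ref{thm:main1}), every component $E_i^t$ of $N_t$ is disjoint from $E$, hence is the pullback of a curve on $X$; nefness of $D$ then forces $(\pi^*(D)-tE)\cdot E_i^t\geq 0$, which is impossible for a component of the negative part, so $N_t=0$ outright. Your pushforward argument reaches the same contradiction and is fine. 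The worry in your final paragraph about $E\subseteq\Supp(N_t)$ is unfounded: from the slice length you get $P_t\cdot E=t>0$, while orthogonality of Zariski decomposition would give $P_t\cdot E=0$ if $E$ were a component of $N_t$.

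The real issue is your reduction step. You assert that $\epsilon(\|D\|;x)=\epsilon(\|P_D\|;x)$ by ``invariance of moving Seshadri constants under adding effective divisors that avoid $x$''. One direction is immediate from the definition (a decomposition $f^*(P_D)=A+E'$ yields $f^*(D)=A+(E'+f^*(N_D))$), but the inequality $\epsilon(\|D\|;x)\leq \epsilon(P_D;x)$ is \emph{not} obvious from the sup--over--models definition: given $f^*(D)=A+E'$, the divisor $E'-f^*(N_D)$ need not be effective, and chasing Seshadri curves runs into the wrong sign on $N_D\cdot C$. The equality is certainly true---it follows at once from the jet--separation characterisation of moving Seshadri constants in \cite{ELMNP2}, since multiplication by $mN_D$ does not affect jets at $x$---but the paper explicitly avoids invoking that machinery (see the remark following the theorem). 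Instead, the paper works directly with the definition: it shows that $\xi(D;x)$ satisfies the same supremum formula as $\epsilon(\|D\|;x)$ by combining Lemma~\ref{lem:K equal} (invariance of infinitesimal polygons under birational maps that are isomorphisms near $x$), the containment $\Delta_{(E,y)}(\pi^*A)\subseteq\Delta_{(E,y)}(\pi^*D)$ for decompositions $D=A+E'$, and a Kodaira--type approximation (Lemma~\ref{lem:kodaira}) of $P_D$ by ample classes. If you want a self-contained argument, you should either supply a direct proof of $\epsilon(\|D\|;x)\leq\epsilon(P_D;x)$ from the definition, or follow the paper's route.
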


\begin{remark}
One of the goals of \cite{ELMNP2} was to show that $\epsilon(||D||,x)\neq 0$ if and only if $x\notin\Null(D)$
(see Theorem~6.2 from the quoted paper). Based on Theorem~\ref{thm:triangle} and Theorem~\ref{thm:main2}, one can redefine moving Seshadri constant
so that it  encodes information when a point is included in the negative locus:
\begin{displaymath}
\epsilon(||D||;x) \  \deq  \  \left\{ \begin{array}{lll}
\xi(\pi^*(D), y) & \textrm{if $x\notin\Null(D)$ and any $y\in E$}\ .\\
0 & \textrm{if $x\in \Null(D)\setminus\Neg(D)$} \ .\\
-1 & \textrm{if $x\in \Neg(D)$} \ .
\end{array} \right.
\end{displaymath}
It is  important to point out  that Theorem~\ref{thm:main2} explains how  this constant can be computed directly on the blow-up of $X$ at  $x$
instead of taking into account all the blow-ups as we have seen in the original definition of moving Seshadri constant.
\end{remark}

\begin{remark}
Our proof of Theorem~\ref{thm:main2} is self-contained in the sense that it  does not make use of  non-trivial material beside the content of this article. Admittedly, it could be streamlined by relying on  the fact that  moving Seshadri constants describe the asymptotic rate of growth of jet separation at $x$, but this requires the introduction of restricted volumes, and as such goes against our intentions. 
\end{remark}

\begin{lemma}\label{lemma:function constant}
With notation as above,  the function 
\[
E \ni y \mapsto \xi (\pi^*(D);y)
\]
is constant. We denote then $\xi(\pi^*(D);y)$ by $\xi(D;x)$.
\end{lemma}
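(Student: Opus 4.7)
The plan is to give a direct calculation using the Lazarsfeld--Musta\c t\u a description of the infinitesimal polygons combined with the containment $\Delta_{(E,y)}(\pi^*(D))\subseteq \Delta_{\mu'}^{-1}$ of Proposition~\ref{prop:propinf}(i). Write, for each $y\in E$,
\[
\Delta_{(E,y)}(\pi^*(D))\ =\ \{(t,z)\in\RR^2_+\mid \nu'\leq t\leq \mu',\ \alpha_y(t)\leq z\leq \beta_y(t)\},
\]
with the usual $\alpha_y,\beta_y$ attached to the Zariski decomposition $\pi^*(D)-tE=P_t'+N_t'$ via the flag $(E,y)$. Note that $\nu'$ and $\mu'$ do not depend on $y$, and that by Remark~\ref{rem:vertical} the vertical slice length satisfies
\[
\beta_y(t)-\alpha_y(t)\ =\ (P_t'\cdot E),
\]
which is also independent of $y$.

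First I dispose of a trivial case: if $\nu'>0$, then no triangle $\Delta_\xi^{-1}$ (which starts at $t=0$) can fit inside $\Delta_{(E,y)}(\pi^*(D))$, so $\xi(\pi^*(D);y)=0$ for every $y$ and we are done. Assume henceforth $\nu'=0$. By Proposition~\ref{prop:propinf}(i) the polygon lies below the diagonal, i.e.\ $\beta_y(t)\leq t$ for every $t\in[0,\mu']$ and every $y\in E$.

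Next I translate the containment $\Delta_\xi^{-1}\subseteq \Delta_{(E,y)}(\pi^*(D))$ into pointwise conditions. It is equivalent to
\[
\alpha_y(t)\ =\ 0\ \ \text{and}\ \ \beta_y(t)\ \geq\ t\qquad\text{for all }t\in[0,\xi].
\]
Combining $\beta_y(t)\geq t$ with the upper bound $\beta_y(t)\leq t$ forces $\beta_y(t)=t$, and then the slice-length identity gives $(P_t'\cdot E)=t$ on $[0,\xi]$. Conversely, if $(P_t'\cdot E)=t$ on $[0,\xi]$, then from $\alpha_y(t)\geq 0$ and $\beta_y(t)\leq t$ we read off $\beta_y(t)-\alpha_y(t)=t$, forcing $\alpha_y(t)=0$ and $\beta_y(t)=t$, so that $\Delta_\xi^{-1}\subseteq\Delta_{(E,y)}(\pi^*(D))$.

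Consequently,
\[
\xi(\pi^*(D);y)\ =\ \sup\bigl\{\xi\geq 0\ \bigm|\ (P_t'\cdot E)=t\ \text{for all }t\in[0,\xi]\bigr\},
\]
and the right-hand side plainly does not depend on the choice of $y\in E$, finishing the proof. The only non-routine ingredient is the upper bound $\beta_y(t)\leq t$ from Proposition~\ref{prop:propinf}(i); once it is in hand, the rest is a two-line comparison against the $y$-invariant quantity $(P_t'\cdot E)$, so no real obstacle arises.
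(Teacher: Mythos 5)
Your proof is correct and takes essentially the same approach as the paper: both arguments hinge on Remark~\ref{rem:vertical} (the vertical slice length $(P_t'\cdot E)$ is independent of $y$) and Proposition~\ref{prop:propinf}(i) (each slice is trapped inside $[0,t]$), so that ``full slice'' is a $y$-independent condition. The paper phrases this directly in terms of the segments $\{\xi\}\times[0,\xi]$ and then appeals to Theorem~\ref{thm:triangle} to anchor the polygon at the origin (using the standing assumption $x\notin\Neg(D)$, which also makes your $\nu'>0$ case vacuous), while you unpack the same observation through the $\alpha_y,\beta_y$ functions and end up with the tidy $y$-free formula $\xi(\pi^*(D);y)=\sup\{\xi\geq 0\mid (P_t'\cdot E)=t\text{ on }[0,\xi]\}$; that explicit reformulation is a pleasant byproduct but does not change the substance of the argument.
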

\begin{proof}
To begin with,  Remark~\ref{rem:vertical} says that  the length of the vertical segment $\Delta_{(E,y)}(\pi^*(D))_{t=\xi}$ is independent of  $y\in E$ for any $\xi\in [0,\mu']$. Furthermore, 
\[
\Delta_{(E,y)}(\pi^*(D))_{t=\xi}\dsubseteq \{\xi\}\times [0 , \xi]
\]
by Proposition~\ref{prop:propinf}. Hence whenever $\{\xi\}\times [0 , \xi]\subseteq \Delta_{(E,y)}(\pi^*(D))$ for some $y\in E$, the same holds  for all points of  $E$. Since $x\notin\Neg(D)$,  then $(0,0)\in \Delta_{(E,y)}(\pi^*(D))$ for any $y\in E$, via Theorem~\ref{thm:triangle}. Therefore $\Delta_{\xi}^{-1}\dsubseteq \Delta_{(E,y)}(\pi^*(D))$ for all $y\in E$, assuming that  the same property is known for just one point in $E$. 
Thus $\xi(\pi^*(D),y)$ does not depend on $y$. 
\end{proof}

First we move on to  give  a proof of Theorem~\ref{thm:main2} in the big and nef case. 
\begin{proposition}\label{prop:bignef}
Let $P$ be a big and nef $\RR$-divisor on $X$. Then $\epsilon(P,x) =\xi(P;x)$ for any $x\in X$. 
\end{proposition}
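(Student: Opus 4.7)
The plan is to prove both inequalities using the standard characterization
\[
\epsilon(P;x) \equ \max\{\epsilon\geq 0 \mid \pi^*P-\epsilon E \text{ is nef on } X'\}
\]
combined with the Lazarsfeld--Musta\c t\u a description of $\Delta_{(E,y)}(\pi^*P)$ via the Zariski decompositions of the divisors $\pi^*P-tE$.

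First I would handle $\xi(P;x)\geq \epsilon(P;x)$. For every $0\leq t\leq \epsilon(P;x)$ the divisor $\pi^*P-tE$ is already nef, so its Zariski decomposition is trivial: $P_t=\pi^*P-tE$ and $N_t=0$. Therefore $\alpha(t)=0$ and $\beta(t)=(P_t\cdot E)=(\pi^*P\cdot E)-t(E^2)=t$, so the vertical slice of $\Delta_{(E,y)}(\pi^*P)$ at $t$ equals $\{t\}\times[0,t]$ for every $y\in E$. Taking the union over $t\in[0,\epsilon(P;x)]$ gives $\Delta_{\epsilon(P;x)}^{-1}\subseteq\Delta_{(E,y)}(\pi^*P)$ for every $y$, whence $\xi(P;x)\geq \epsilon(P;x)$.

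For the reverse inequality, suppose $\xi>\epsilon(P;x)$. Then $\pi^*P-\xi E$ is not nef, so some irreducible curve $\Gamma\subseteq X'$ satisfies $(\pi^*P-\xi E)\cdot\Gamma<0$. The case $\Gamma=E$ is ruled out since $(\pi^*P-\xi E)\cdot E=\xi>0$, so $\Gamma=\overline{C'}$ is the strict transform of an irreducible curve $C'\subseteq X$; nefness of $P$ forces $x\in C'$ (otherwise $(\pi^*P-\xi E)\cdot\overline{C'}=(P\cdot C')\geq 0$). Writing $m=\mult_x(C')$ and $t_0=(P\cdot C')/m<\xi$, one has $(\pi^*P-tE)\cdot\overline{C'}<0$ for all $t\in(t_0,\xi)$; the usual positive/negative-part dichotomy ($P_t$ is nef and $N_t$ is effective) then forces $\overline{C'}\subseteq\Supp(N_t)$ for such $t$. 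Choosing $y\in\overline{C'}\cap E$, which is non-empty because $x\in C'$, we obtain $\alpha(t)=\ord_y(N_t|_E)>0$, so $(t,0)\notin\Delta_{(E,y)}(\pi^*P)$; hence $\Delta_\xi^{-1}\not\subseteq\Delta_{(E,y)}(\pi^*P)$ for this specific $y$. Applying Lemma~\ref{lemma:function constant} propagates the failure to every $y\in E$, giving $\xi(P;x)\leq t_0<\xi$, and letting $\xi\to\epsilon(P;x)^+$ yields $\xi(P;x)\leq \epsilon(P;x)$.

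The main obstacle is the reverse inequality: one must simultaneously locate a flag point $y$ where $\alpha$ becomes positive, and transfer the resulting obstruction to every $y\in E$. The geometric leverage is precisely that a Seshadri-obstructing curve passes through $x$, so its strict transform meets $E$ and supplies the required $y$; the analytical leverage is Lemma~\ref{lemma:function constant}, which applies here without further hypotheses since $P$ nef implies $\Neg(P)=\varnothing$. The $\RR$-coefficient case requires no special treatment because nefness is a closed condition and Zariski decomposition behaves continuously inside the big cone.
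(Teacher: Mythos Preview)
Your proof is correct. The inequality $\xi(P;x)\geq\epsilon(P;x)$ is argued exactly as in the paper. For the reverse inequality you take a genuinely different route from the paper's. The paper argues the contrapositive globally: assuming $\Delta_\xi^{-1}\subseteq\Delta_{(E,y)}(\pi^*P)$ for every $y\in E$, it invokes Theorem~\ref{thm:main1} to deduce $\Neg(\pi^*P-tE)\cap E=\varnothing$ for all $t\in[0,\xi]$, and then observes that any component $E_i^t$ of $N_t$ would be disjoint from $E$, whence $(\pi^*P-tE)\cdot E_i^t=\pi^*P\cdot E_i^t\geq 0$, contradicting the Zariski construction --- so $N_t=0$ and $\pi^*P-tE$ is nef. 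You instead locate a concrete Seshadri-obstructing curve $C'$, show that its strict transform lands in $\Supp(N_t)$ for $t>t_0$, and read off $\alpha(t)>0$ at a chosen point $y\in\overline{C'}\cap E$; Lemma~\ref{lemma:function constant} then transports the obstruction to every $y$. Your argument is more hands-on and does not quote Theorem~\ref{thm:main1} directly (though Lemma~\ref{lemma:function constant} relies on Theorem~\ref{thm:triangle}, which in turn rests on Theorem~\ref{thm:main1}); the paper's argument is shorter because it leverages the machinery already in place.

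One small technical caveat: your claim that $\overline{C'}\subseteq\Supp(N_t)$ tacitly assumes $\pi^*P-tE$ is pseudo-effective on $(t_0,\xi)$ so that the Zariski decomposition exists. This is automatic for $t\leq\mu'(P;x)$, and for $\xi>\mu'(P;x)$ the conclusion $\xi(P;x)<\xi$ is immediate anyway since the polygon is supported in $[0,\mu']$; so nothing is lost, but it is worth saying.
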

\begin{proof}
We verify first that $\epsilon(D;x)\leq\xi(D;x)$. By the definition of  Seshadri constants it is enough to show that if $\pi^*(P)-t E$ is nef for all $0\leq t\leq \epsilon$, then $\epsilon < \xi(\pi^*(P);y)$ for some  $y\in E$. 

Recall from Section~2.1 that 
\[ 
\Delta_{(E,y)} (\pi^*(P)) \equ \{ (t,z)\in \RR^2_+ \ | \ \nu\leq t\leq \mu, \alpha(t)\leq 
z\leq \beta(t)\}\ ,
\]
where $\alpha(t) \equ  \ord_y(N_t|_E)$, $\beta(t) \equ \alpha(t)+P_t\cdot E$, and $\pi^*(P)-tE=P_t+N_t$ is the appropriate Zariski decomposition. Note that $\pi^*(P)-tE$ is nef, and thus $N_t=0$ for all $0\leq t \leq\epsilon$. In particular, $\alpha(t)=0$ and $\beta(t)=t$. Hence $\Delta^{-1}_{\epsilon} \dsubseteq \Delta_{(E,y)}(\pi^*(P))$ and consequently, $\epsilon\leq \xi(\pi^*(P);x)$.

For the reverse inequality, we show that  if $\xi<\xi(P;x)$ then $\pi^*(P)-t E$ is nef for all 
$0\leq t\leq \xi$. By Remark~\ref{rem:shift}, then $(0,0)\in \Delta_{(E,y)}(\pi^*(D)-tE)$ for any $t\in[0,\xi]$ and all $y\in E$. Thus, Theorem~\ref{thm:main1} yields
\begin{equation}\label{eq:null}
\Neg(\pi^*(P)-t E)\ \bigcap \ E \equ \varnothing \ \ \text{for all $t\in[0,\xi]$.}
\end{equation}
We prove that this condition forces $\pi^*(P)-t E$ to be nef. Let $\pi^*(P)-t E \equ P_t+\sum a_iE^t_i$ be its  Zariski decomposition. Since $P$ is nef,  (\ref{eq:null}) implies that $((\pi^*(P)-t E)\cdot E^t_i)\geq 0$ for all $i$. On the other hand, by the construction of  Zariski decomposition we must have $(\pi^*(P)-t E)\cdot E^t_i<0$ for some $i$. Thus, each $a_i=0$ and  $\pi^*(P)-t E$ is big and nef  for each $t\in [0,\xi]$. This ends the proof.
\end{proof}

\begin{proof}[Proof of Theorem~\ref{thm:main2}]
By Proposition~\ref{prop:bignef}, it suffices  to show that
\[
 \xi(D;x) \equ \smash{\displaystyle\sup}_{f^*(D)=A+E} \{\xi(A,f^{-1}(x))\}, \textup{ whenever }x\notin\Neg(D) \ ,
\]
where the supremum is taken over all birational morphisms $f\colon X''\rightarrow X$ with $X''$ smooth
that  are isomorphisms over a neighborhood of $x$, and all decompositions $f^*(D)=A+E$, with $A$ an ample $\RR$-divisor, $E$ effective, and $x\notin\Supp(E)$. 

For a given such map $f$  it is not hard to see that $\xi(D,x) \equ \xi(f^*(D),f^{-1}(x))$ as a consequence of a stronger statement saying that $\mathcal{K}'(D,x) \equ  \mathcal{K}'(f^*(D),f^{-1}(x))$, proved in Lemma~\ref{lem:K equal} below.
 
Granting this, it only remains to show that if $D$ is a big $\RR$-divisor on $X$, then
\begin{equation}\label{eq:sup}
\xi(D,x) \equ  \sup\{\xi(A,x) \ | \ D=A+E,\ A\textup{ ample},\ E \textup{ effective and }x\notin\textup{Supp}(E)\}\ .
\end{equation}
To this end,  let $D=A+E$ be a decomposition as in $(\ref{eq:sup})$, and let $\pi\colon X'\rightarrow X$ the blow-up of $X$ at the point $x$. Since $x\notin\Supp(E)$, it follows quickly  that 
\[
\Delta_{(E,y)}(\pi^*(A)) \dsubseteq \Delta_{(E,y)}(\pi^*(D))
\]
for any point $y\in E$: namely, if $D'\equiv A$ is an effective $\RR$-divisor, then $D'+E\equiv D$ is also $\RR$-effective. Furthermore, since $x\notin\Supp(E)$, one has
\[
\nu_{(E,y)}(\pi^*(D'+E)) \equ \nu_{(E,y)}(\pi^*(D'))\ , \text{ for all } y\in E.
\] 
Using the definition of Lazarsfeld and Musta\c t\u a for Newton--Okounkov polygons of $\RR$-divisors, one obtains the inclusion of the polygons above. This proves the inequality '$\geq$' in $(\ref{eq:sup})$.

For the inequality '$\leq$' in (\ref{eq:sup}), let $D=P_D+N_D$ be the corresponding Zariski decomposition. Since $x\notin\Supp(N_D)$,  then, by Remark~\ref{rem:ZD of pullback}, we know that $\pi^*(D)=\pi^*(P_D)+\pi^*(N_D)$ is the Zariski decomposition of $\pi^*(D)$. This condition also implies that $\Supp(\pi^*(N_D))\cap E \equ \varnothing$. Thus, by Lemma~\ref{lem:positive}, we have 
\[
\Delta_{(E,y)}(\pi^*(D)) \equ \Delta_{(E,y)}(\pi^*(P_D))\ , \ \ \textup{for any } y\in E \ .
\]
This reduces our problem to case when $D=P$ is big and nef. However, by Lemma~\ref{lem:kodaira} there exists an effective divisor $E$, such that $P-\frac{1}{k}E$ is ample $\RR$-divisor for any natural $k\gg 0$. Using this and the continuity property of the Newton--Okounkov polygons inside the big cone, the direct inequality takes places when $D=P$ is big and nef, which finishes the proof of the theorem.
\end{proof}

\begin{lemma}\label{lem:K equal}
 With notation as above, 
 \[
\mathcal{K}'(D,x) \equ  \mathcal{K}'(f^*(D),f^{-1}(x))\ .
\]
\end{lemma}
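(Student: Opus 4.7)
The plan is to construct a birational morphism $\tilde{f}$ from the blow-up of $X''$ at $f^{-1}(x)$ to the blow-up $\pi\colon X'\to X$ of $X$ at $x$, verify that it restricts to an isomorphism identifying the two exceptional divisors together with their flags, and then match up the corresponding Newton--Okounkov polygons using sections.

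Let $x'' \deq f^{-1}(x)\in X''$ (a single smooth point, since $f$ is an isomorphism over a neighborhood of $x$), and let $\pi_Y\colon Y\to X''$ denote the blow-up of $X''$ at $x''$ with exceptional divisor $F$. Since $f$ is an isomorphism around $x$, the ideal $f^{-1}(\mathcal{I}_x)\cdot \mathcal{O}_{X''}$ coincides with $\mathcal{I}_{x''}$, hence $(f\pi_Y)^{-1}(\mathcal{I}_x)\cdot \mathcal{O}_Y = \mathcal{O}_Y(-F)$ is invertible. The universal property of $\pi$ then yields a unique birational morphism $\tilde{f}\colon Y\to X'$ with $\pi\tilde{f} = f\pi_Y$, and by construction $\tilde{f}$ is an isomorphism over a neighborhood of $E$; in particular it restricts to an isomorphism $F\xrightarrow{\sim} E$. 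For $y\in E$, set $y_F \deq \tilde{f}^{-1}(y)\in F$.

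Suppose first that $D$ is a big $\QQ$-divisor. Since $\tilde{f}$ is a birational morphism of smooth projective surfaces, Zariski's main theorem gives $\tilde{f}_*\mathcal{O}_Y = \mathcal{O}_{X'}$, so for every $m>0$ with $m\pi^*D$ integral the pullback defines an isomorphism
\[
\tilde{f}^*\colon H^0(X', m\pi^*D)\ \xrightarrow{\sim}\ H^0(Y, m\pi_Y^*f^*D)\ .
\]
Because $\tilde{f}$ is an isomorphism around the flag $(E,y)$, the valuation vectors match along this bijection: $\nu_{(E,y)}(s) \equ \nu_{(F,y_F)}(\tilde{f}^*s)$ for every section $s$. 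Passing to the closures of normalized valuation vectors then gives $\Delta_{(E,y)}(\pi^*D) \equ \Delta_{(F,y_F)}(\pi_Y^*f^*D)$.

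The general $\RR$-divisor case reduces to the $\QQ$-divisor case via the continuity argument from the second half of the proof of Lemma~\ref{lem:positive}: approximate $D$ by $D+A_n$ with $A_n$ ample, $\|A_n\|\to 0$, and $D+A_n$ a $\QQ$-divisor, then apply the identity
\[
\Delta_{(E,y)}(\pi^*D) \equ \bigcap_{n\in\NN}\Delta_{(E,y)}(\pi^*(D+A_n))
\]
on $X'$ together with its analogue on $Y$. Letting $y$ range over $E$ (equivalently $y_F$ over $F$) produces the set equality $\mathcal{K}'(D,x) \equ \mathcal{K}'(f^*D, f^{-1}(x))$. The only step requiring any thought is the construction of $\tilde{f}$ and the verification that it is an isomorphism around the flag; once this is in place, Zariski's main theorem plus the locality of the valuation $\nu_{(E,y)}$ near $E$ carry out the remainder of the argument.
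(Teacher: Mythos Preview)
Your proof is correct and follows essentially the same strategy as the paper: reduce to $\QQ$-divisors via the approximation argument of Lemma~\ref{lem:positive}, invoke Zariski's Main Theorem for the section isomorphism, and use that $f$ is an isomorphism near $x$ to match the flag valuations. The only difference is one of explicitness: the paper applies Zariski's Main Theorem directly to $f$ and then remarks that the infinitesimal computations take place on isomorphic neighborhoods of the exceptional divisors, whereas you actually construct the induced morphism $\tilde{f}\colon Y\to X'$ between the blow-ups via the universal property and apply Zariski's Main Theorem to $\tilde{f}$ instead---a cleaner packaging of the same idea.
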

\begin{proof}
Applying the ideas from the last part of the proof of Lemma~\ref{lem:positive}, i.e. Lemma~8 from \cite{AKL}, where the classes forming the limit $A_n$ are big and semi-ample, it is enough to show the statement in the case when $D$ is a big $\QQ$-divisor.

Now, by Zariski's Main theorem  pulling back sections  defines the isomorphisms 
\[
H^0(X,\sO_{X}(mD))\simeq H^0(X'',\sO_{X''}(mf^*(D)))\ 
\]
for all $m>0$. As $f$ is an isomorphism over a neighborhood of $x$, the computations of the infinitesimal Newton--Okounkov polygons on both sides of $f$ can be done on two isomorphic neighborhoods containing the corresponding exceptional divisors.
Thus, the two  sets of polygons are equal. 
\end{proof}

\subsection{Applications to questions about Seshadri constants}

In this subsection we discuss some interesting applications to questions about Seshadri constants using the material above. We start with an observation regarding valuative points on the boundary of infinitesimal Newton--Okounkov polygons.

\begin{corollary}\label{cor:ratvaluation}
Let $D$ be a big $\QQ$-divisor on $X$. Fix a point $x\notin\Null(D)$ and suppose that 
\[
\Delta^{-1}_{\xi} \dsubseteq \Delta_{(E,y_0)}(\pi^*(D))
\]
for some $\xi>0$ and $y_0\in E$. Then any rational point on the diagonal segment $\{(t,t)|0\leq t<\xi\}$ and on the horizontal segment  $[0, \xi)\times\{0\}$ is  valuative.
\end{corollary}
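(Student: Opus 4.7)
The two segments require different techniques. The horizontal segment is a direct adaptation of Corollary~\ref{cor:valpoints}(ii) transplanted from $X$ to $X'$: for rational $\delta\in (0,\xi)$, the shift identity of Remark~\ref{rem:shift} together with the hypothesis $\Delta_{\xi}^{-1}\subseteq \Delta_{(E,y_0)}(\pi^*(D))$ gives
\[
\Delta_{(E,y_0)}(\pi^*(D)-\delta E)\ \supseteq\ \{(s,y)\in\RR^2_+\mid 0\leq s\leq \xi-\delta,\ 0\leq y\leq s+\delta\}\ ,
\]
which contains the standard simplex $\Delta_{\min(\delta,\xi-\delta)}$. Theorem~\ref{thm:main1}(ii), applied on $X'$, then forces $y_0\notin \Null(\pi^*(D)-\delta E)\supseteq \mathbf{B}(\pi^*(D)-\delta E)$, so $(0,0)$ is valuative for $\pi^*(D)-\delta E$; shifting back yields that $(\delta,0)$ is valuative for $\pi^*(D)$. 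The case $\delta=0$ is easier: $x\notin \Null(D)\supseteq \mathbf{B}(D)$ supplies an effective divisor $\equiv D$ avoiding $x$, whose pull-back avoids $E$ entirely and realises $(0,0)$.

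For the diagonal segment, fix a rational $t\in(0,\xi)$. By Remark~\ref{rem:shift}, it suffices to show that $(0,t)$ is valuative for $\pi^*(D)-tE$ with respect to $(E,y_0)$. The plan is to realise this valuation via the unique section of $\sO_{\PP^1}(mt)$ vanishing to order $mt$ at $y_0$, lifted to $X'$ by a vanishing argument. First, by Lemma~\ref{lemma:function constant} the hypothesis propagates from $y_0$ to every $y\in E$, so the same simplex-containment used in the horizontal case gives that $\Delta_{(E,y)}(\pi^*(D)-tE)$ contains a nontrivial standard simplex for every $y\in E$. Theorem~\ref{thm:main1}(ii) then yields $E\cap \Null(\pi^*(D)-tE)=\varnothing$.

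Let $\pi^*(D)-tE=P+N$ be the Zariski decomposition. The previous step forces $\Supp(N)\cap E=\varnothing$ and $\Null(P)\cap E=\varnothing$; Lemma~\ref{lem:kodaira} applied to $P$ then produces $P\equiv A+G$ with $A$ an ample $\QQ$-divisor and $G$ an effective divisor supported on $\Null(P)$, hence disjoint from $E$. Thus $\pi^*(D)-tE\equiv A+(G+N)$ with $G+N$ effective and disjoint from $E$, and $A\cdot E=(\pi^*(D)-tE)\cdot E=t$. Choose $m$ divisible enough so that $mA$, $m(G+N)$ and $mt$ are integral; Serre vanishing gives $H^1(X',mA-E)=0$ for $m\gg 0$, whence
\[
H^0(X',\sO_{X'}(mA))\ \twoheadrightarrow\ H^0(\PP^1,\sO_{\PP^1}(mt))
\]
is surjective. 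The rationality of $E\cong\PP^1$ provides a section of the right-hand side vanishing to order exactly $mt$ at $y_0$; lift it to $s\in H^0(X',\sO_{X'}(mA))$. The effective divisor $\{s=0\}+m(G+N)$ is numerically equivalent to $m(\pi^*(D)-tE)$, and since $G+N$ misses $E$ and $s|_E\neq 0$, its valuation at $(E,y_0)$ equals $(0,mt)$. Dividing by $m$ shows that $(0,t)$ is valuative for $\pi^*(D)-tE$, completing the diagonal case.

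The principal obstacle is the diagonal step: the horizontal-segment machinery only detects absence from the base locus, which is too weak to prescribe the restriction of a section to $E$. The decomposition in the third paragraph, which uses Lemma~\ref{lem:kodaira} together with the Zariski decomposition to isolate an ample piece whose complement avoids $E$, is exactly what allows Serre vanishing and the rationality of $E$ to intervene and carry out the construction.
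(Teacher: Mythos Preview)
Your argument is correct and follows essentially the same route as the paper's proof: the horizontal segment is handled via the shift identity and Theorem~\ref{thm:main1} exactly as in Corollary~\ref{cor:valpoints}(ii), and for the diagonal you propagate the simplex to all $y\in E$ via Lemma~\ref{lemma:function constant}, deduce $\Null(\pi^*(D)-tE)\cap E=\varnothing$, peel off the negative part and then an effective piece supported on $\Null(P)$ using Lemma~\ref{lem:kodaira} to reduce to an ample divisor, and finish with Serre vanishing and the rationality of $E$ as in Remark~\ref{rem:vanishing}. One cosmetic point: since Lemma~\ref{lem:kodaira} and Zariski decomposition give honest equalities of $\QQ$-divisors (not merely numerical ones), your divisor $\{s=0\}+m(G+N)$ is in fact $\QQ$-linearly equivalent to $m(\pi^*(D)-tE)$, which is what the definition of a valuative point for a $\QQ$-divisor requires---so you should say ``linearly equivalent'' rather than ``numerically equivalent'' there.
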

\begin{remark}
It is somewhat surprising that the rational points on the diagonal segment are valuative. As it was pointed out in Remark~\ref{rem:vanishing}, the reason is that the curve $E$ is rational.
\end{remark}

\begin{proof}
The statement for the horizontal line segment $(0,\xi)\times\{0\}$ can be obtained analogously as in the first part of the proof of Corollary~\ref{cor:valpoints}. Furthermore, since $x\notin\Null(D)$, then $x\notin\textup{\textbf{B}}(D)$. This implies that the origin $(0,0)$ is a valuative point.

For the points on the diagonal, let $t\in[0,\xi)$ be a rational number. Our  goal is to prove that $(t,t)$ is a valuative point. By Remark~\ref{rem:shift}, it is enough to show that $(0,t)\in\Delta_{(E,y_0)}(\pi^*(D)-tE)$
is a valuative point for the divisor $\pi^*(D)-tE$. 

By Theorem~\ref{thm:main2}, we know that $\Delta_{\xi}^{-1}\dsubseteq \Delta_{(E,y)}(\pi^*(D))$ for all $y\in E$. Thus, $\Delta_{(E,y)}(\pi^*(D)-tE)$ contains a small simplex 
for any $y\in E$, if we make use of Remark~\ref{rem:shift}. This implies via Theorem~\ref{thm:main1} that
\begin{equation}\label{eq:int}
\Null(\pi^*(D)-tE) \ \cap \ E \equ \varnothing \ .
\end{equation}
In what follows we  reduce the statement to the case of ample divisors. Let $\pi^*(D)-tE \equ P_t+N_t$ be the appropriate Zariski decomposition, and assume that all the divisors involved are integral.  By (\ref{eq:int}),  
we know that $y_0\notin\textup{Supp}(N_t)$. Thus, by Lemma~\ref{lem:positive}, 
\[
\Delta_{(E,y_0)}(\pi^*(D)-tE) \equ  \Delta_{(E,y_0)}(P_t)\ .
\]
Recall that \cite{PAG}*{Proposition 2.3.21} shows that  the inclusion map 
\[
H^0(X',sO_{X'}(mP))\rightarrow H^0(X',\sO_{X'}(m(\pi^*(D)-tE))\ ,
\]
defined by the multiplication by the divisor $mN_t$, is an isomorphism. Hence, we reduced the problem to the case when $\pi^*(D)-tE=P_t$ is big and nef and the point of interest is $(0,t)=(0,(P_t.E))$. 

By Remark~\ref{rem:ZD of pullback}, there exist irreducible curves $C_i\subseteq \Null(P_t)$ and rational numbers $\epsilon_i>0$ for $i=1,\ldots ,k$, such that $A_t\deq P_t-\sum_{i=1}^{i=k}\epsilon_iC_i$ is an ample $\QQ$-divisor. By (\ref{eq:int}), we have that $C_i\cap E=\varnothing$ for all $i=1,\ldots, k$. Thus the point $(0,t)=(0,(A_t.E))$ belongs to $\Delta_{(E,y_0)}(A_t)$, and it suffices to treat the case of ample 
$\QQ$-divisors. However,  this situation has  already been discussed in Remark~\ref{rem:vanishing}, which  finishes the proof.
\end{proof}

As a consequence, we obtain  criteria for finding  lower bounds for Seshadri constants. 
 
\begin{corollary}\label{cor:lowerbound}
Let $X$ be a smooth surface, $x\in X$ a point,  $A$  an ample $\QQ$-divisor on $X$ and $q>0$ a rational number. 
Then the following conditions are equivalent:
\begin{enumerate}
\item The Seshadri constant $\epsilon(A,x)\geq q$.
\item There exists a point $y\in E$ such that $\Delta_{(E,y)}(\pi^*(A))$ contains both the point $(q,0)$ and $(q,q)$.
\item For all $y\in E$, $(q,0)\in \Delta_{(E,y)}(\pi^*(A))$.
\item For all $y\in E$, $(q,q)\in \Delta_{(E,y)}(\pi^*(A))$.
\item There exists $y_1\neq y_2\in E$ so that the point $(q,q)\in \Delta_{(E,y_i)}(\pi^*(A))$ for any $i=1,2$.
\end{enumerate}
\end{corollary}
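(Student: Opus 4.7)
My plan is to anchor everything on the identification $\epsilon(A,x) = \xi(A,x)$ from Proposition~\ref{prop:bignef}, valid since $A$ is ample (hence big and nef), together with Lemma~\ref{lemma:function constant} which tells us that $\xi(\pi^*(A); y)$ is independent of $y \in E$. Consequently, (1) is equivalent to the containment $\Delta_q^{-1} \subseteq \Delta_{(E,y)}(\pi^*(A))$ for every $y \in E$. Since $(0,0)$, $(q,0)$, and $(q,q)$ are the vertices of $\Delta_q^{-1}$, this instantly yields (1) $\Rightarrow$ (2), (3), (4), (5). The substance of the proof is to run each of the four converse implications back to (1).

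For (2) $\Rightarrow$ (1), I would use Proposition~\ref{prop:propinf}(i) to confine each polygon to $\Delta_{\mu'}^{-1}$: at the chosen $y$ the vertical slice at $t=q$ sits inside $\{q\}\times[0,q]$ but contains both $(q,0)$ and $(q,q)$, so it fills the entire segment and has length $q$. By Remark~\ref{rem:vertical} the slice length at $t=q$ is independent of the flag point, and combined with the containment in $\{q\}\times[0,q]$ every polygon's slice at $t=q$ must itself be $\{q\}\times[0,q]$. Together with $(0,0)\in\Delta_{(E,y')}(\pi^*(A))$, supplied by Theorem~\ref{thm:triangle}(1) and the ampleness of $A$, convexity delivers $\Delta_q^{-1}\subseteq\Delta_{(E,y')}(\pi^*(A))$ for every $y'\in E$, which is (1). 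For (3) $\Rightarrow$ (1), convexity with $(0,0)$ promotes $(q,0)$ to the whole horizontal segment $[0,q]\times\{0\}$ inside every $\Delta_{(E,y)}(\pi^*(A))$; by Remark~\ref{rem:shift} this reads as $(0,0)\in\Delta_{(E,y)}(\pi^*(A)-tE)$ for every $y$ and every $t\in[0,q)$, which via Theorem~\ref{thm:main1}(i) is $\Neg(\pi^*(A)-tE)\cap E = \varnothing$. I would then rerun the argument from the proof of Proposition~\ref{prop:bignef}: a component of the negative part of $\pi^*(A)-tE$ disjoint from $E$ is the strict transform $\widetilde{C}$ of some irreducible curve $C\subseteq X$, and $(\pi^*(A)-tE)\cdot \widetilde{C} = A\cdot C > 0$ contradicts its membership in the negative part. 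Thus $\pi^*(A)-tE$ is nef for all $t\in[0,q)$, hence at $t=q$ by closedness of the nef cone. Implication (4) $\Rightarrow$ (1) reduces at once to (2): Proposition~\ref{prop:propinf}(ii) ensures that for a generic $y\in E$ the segment $[0,\mu']\times\{0\}$ lies in the polygon, so in particular $(q,0)$ does, while hypothesis (4) supplies $(q,q)$ at the same $y$.

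The main obstacle is (5) $\Rightarrow$ (1), which I plan to route through (3) by a degree count on $E\cong\PP^1$. Let $\pi^*(A)-qE = P_q + N_q$ be the Zariski decomposition and $\ell \deq P_q\cdot E$ the common length of the slice at $t=q$. The hypothesis $(q,q)\in\Delta_{(E,y_i)}(\pi^*(A))$ combined with Proposition~\ref{prop:propinf}(i) forces $\beta_i(q) = q$, and hence $\alpha_i(q) = q - \ell$ for $i=1,2$. Since $(\pi^*(A)-qE)\cdot E = q > 0$, the curve $E$ does not appear in the support of $N_q$, so $N_q|_E$ is a genuine effective divisor on $E$ of degree $N_q\cdot E = q - \ell$. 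On the other hand, $\alpha_i(q) = \ord_{y_i}(N_q|_E) = q-\ell$ at two distinct points forces $\deg N_q|_E \geq 2(q-\ell)$; comparing gives $q-\ell\leq 0$, so $\ell = q$ and $N_q|_E = 0$. Therefore $\alpha(q)\equiv 0$ and $\beta(q) = q$ for every $y\in E$, and $(q,0)\in\Delta_{(E,y)}(\pi^*(A))$ for all $y$, which is (3); the preceding paragraph then closes the loop.
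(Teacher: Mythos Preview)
Your proof is correct and follows the paper's approach closely: the easy direction via $\epsilon(A,x)=\xi(A,x)$, the slice-length argument for $(2)\Rightarrow(1)$, the appeal to Proposition~\ref{prop:bignef} for $(3)\Rightarrow(1)$, and the degree count on $N|_E$ at two distinct points for $(5)\Rightarrow(1)$ are all as in the paper (which carries out this last step at a general $t\in(0,q)$ and concludes $N_t=0$ directly, while you work at $t=q$ and route through $(3)$---a cosmetic difference).

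One slip worth fixing: in $(5)\Rightarrow(1)$ your justification that $(\pi^*(A)-qE)\cdot E=q>0$ forces $E\nsubseteq\Supp(N_q)$ is not a valid argument, since a component of the negative part can perfectly well have positive intersection with the divisor (only \emph{some} component must intersect it negatively). The conclusion is nevertheless true: since $\pi^*(A)$ is nef we have $\nu'=0$, and Remark~\ref{rem:shift} then shows that the coefficient of $E$ in $N_t$ vanishes for every $t\in[0,\mu']$, so $N_q|_E$ is indeed a genuine effective divisor on $E$ of degree $q-\ell$ and your degree comparison goes through.
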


\begin{remark}\label{rem:tangency}
As  $q\in\QQ$,  Corollary~\ref{cor:ratvaluation} implies that  the conditions in the statement can be translated to ones about linear series on $X$ itself. For example, the point $(q,0)$ lies in $\Delta_{(E,y)}(\pi^*(A))$, provided there exists an effective $\QQ$-divisor $D'\sim_{\QQ}A$ for which  
$\mult_x(D')\geq q$ and the tangent direction of each branch of $D'$ is distinct from the one defined by  $y\in E$. 

Similarly, the point $(q,q)\in \Delta_{(E,y)}(\pi^*(A))$ whenever there exists an effective $\QQ$-divisor $D''\sim_{\QQ}A$ such that $\mult_x(D'')\geq q$ and the tangent direction of each branch of $D''$ is the same as the one defined by the point $y$.
\end{remark}

\begin{remark}
Suppose that $D', D''\sim_{\QQ}A$ are effective divisors having the property that  
\[
\min\{\ord_x(D'),\ord_x(D'')\}\dgeq q
\]
and any tangent direction of any branch of $D'$ is distinct from those  of $D''$. Then $\epsilon(A,x)\geq q$. 

We can see this as follows. By Corollary~\ref{cor:lowerbound}, it suffices to show $(3)$. However, note that 
$\nu_{(E,y)}(\pi^*(D')) \equ (q,0)$ for any $y\in E$ that is different from the tangent direction of any branch of $D'$. As the same can be said about $D''$, the condition we asked above indeed implies $(3)$.
\end{remark}
\begin{proof}
For starters,  we observe that $(1)$ implies conditions $(2)-(5)$ via Theorem~\ref{thm:main2}. Therefore we are left with proving the reverse implications. 
 
First, $(2)$ implies $(1)$  follows again from Theorem~\ref{thm:main2} and the fact that $(0,0)\in\Delta_{(E,y)}(\pi^*(A))$, as $A$ is ample. Notice that $(4)$  implies $(5)$ is immediate, and $(3)$ implies $(1)$ follows word by word from the second part of the proof of 
Proposition~\ref{prop:bignef}.

We are left to prove $(5)\Longrightarrow (1)$. Fix $t\in (0,q)$ and the goal is to show that $\pi^*(A)-tE$ is nef. Let $\pi^*(A)-tE=P+N$ be the corresponding Zariski decomposition. Since both points $(0,0)$ and $(q,q)$ are contained in $\Delta_{(E,y_i)}(\pi^*(A))$ for any $i=1,2$, by convexity the point $(t,t)$ is also contained in these polygons. By the formula for  Newton--Okounkov polygons from Section~2.1, this implies that 
\[
\ord_{y_i}(N|_E)+(P.E) \equ t \ \ \text{ for any $i=1,2$.}
\]
On the other hand, $(\pi^*(A)-tE). E = (P+N).E = t$. In particular, 
\[
(N\cdot E) \equ \ord_{y_1}(N|_E) \equ \ord_{y_2}(N|_E)\ .
\]
By the same token as in Remark~\ref{rem:definition},  the first equality implies that the effective divisors $N$ and $E$ intersect only at $y_1$, 
while the second one implies that they intersect only at $y_2$. Since $y_1\neq y_2$, necessarily $N=0$, i.e. $\pi^*(A)-tE$ is nef. 
This finishes the proof.
\end{proof}

\section{Applications}

We present some  applications to questions regarding Seshadri constants seen through the lenses of the theory of Newton--Okounkov polygons developed in the previous sections. 

First, we give a new proof of a lower bound for very generic points by Ein and Lazarsfeld that relied originally on deformation theory; our argumentation is based on earlier work of Nakamaye. Second, based on Theorem~\ref{thm:main1}, we introduce a new invariant  that encodes the size of the largest simplex that can be included in some Newton--Okounkov polygon of a given  divisor by varying the curve flag. We connect this invariant to the Seshadri constant. Lastly, using Diophantine approximation, we show that whenever the surface has a rational polyhedral nef  cone, the global Seshadri constant at any point is strictly positive. 

\subsection{Generic infinitesimal Newton--Okounkov polygon.}
Let $A$ be an ample Cartier divisor on a smooth projective surface $X$.  Ein and Lazarsfeld proved in \cite{EL} that $\epsilon(A,x)\geq 1$ 
for very general point $x\in X$. Later, Cascini and Nakamaye in \cite{CN}, gave a different proof avoiding deformation theory 
based on  ideas  developed previously by the second author in \cite{Nak2}. Here we translate the line of thought of Cascini--Nakamaye 
to the language of infinitesimal Newton--Okounkov polygons.

The main extra ingredient is the following observation of  Nakamaye (see  \cite{Nak2}*{Lemma 1.3}). As Nakamaye points out, the result  is an easy consequence of a statement about the smoothing divisors in families as seen in \cite{PAG}*{Proposition 5.2.13}. This claim initially  appears  in \cite{Nak2}, and it is used both in \cite{Nak2} and \cite{CN} to establish lower bounds on Seshadri constants in higher dimensions.

\begin{lemma}\label{lem:nakamaye}
Let $x\in X$ be a very general point and $D$ be an effective integral divisor on $X$. Suppose $W\subseteq X$ be an irreducible curve passing through $x$. Let $\overline{W}$ be the proper transform of $W$ through the blow-up $\pi :X'\rightarrow X$ of the point $x$. Also, define 
\[
\alpha(W) \ = \ \textup{inf}_{\beta\in \QQ}\{\overline{W}\subseteq\Null(\pi^*(D)-\beta E)\}\ .
\]
Then $\textup{mult}_{\overline{W}}(||\pi^*(D)-\beta E||)\geq \beta -\alpha(W)$ for all $\beta\geq \alpha(W)$.
\end{lemma}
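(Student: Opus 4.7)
The plan is to reduce the statement to a claim about effective divisors on $X$ itself and then invoke the smoothing-in-families result \cite{PAG}*{Proposition 5.2.13} through the very-general-point hypothesis.

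First, I would reinterpret $\mult_{\overline{W}}(\|\pi^*(D)-\beta E\|)$ by means of the Zariski decomposition on $X'$: writing $\pi^*(D)-\beta E = P_\beta + N_\beta$, the asymptotic multiplicity along $\overline{W}$ equals $n(\beta)\deq \mult_{\overline{W}}(N_\beta)$ (a standard consequence of \cite{PAG}*{Proposition 2.3.21}). By the orthogonality of Zariski decomposition, every component of $N_\beta$ is contained in $\Null(\pi^*(D)-\beta E)$, so $n(\beta)=0$ whenever $\beta<\alpha(W)$. The lemma then amounts to establishing the linear lower bound $n(\beta)\geq \beta-\alpha(W)$ for $\beta\geq \alpha(W)$, i.e., a bound on the rate at which $\overline{W}$ accumulates inside the negative part past its threshold.

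Second, I would translate the problem from $X'$ to $X$. Any effective integral divisor $F\equiv m(\pi^*(D)-\beta E)$ splits uniquely as $F=\widetilde{G}+aE$ with $a\geq 0$, where $\widetilde{G}$ is the strict transform of an effective $G\equiv mD$ on $X$ satisfying $\mult_x(G)=m\beta+a\geq m\beta$; moreover $\mult_{\overline{W}}(F)=\mult_W(G)$, since $\widetilde{G}$ inherits the coefficient of $W$ in $G$ and $E$ is disjoint from $\overline{W}$. Consequently the bound is equivalent to
\[
\inf\Bigl\{\tfrac{1}{m}\mult_W(G) \,\Big|\, G\in |mD|,\ \mult_x(G)\geq m\beta\Bigr\}\;\geq\;\beta-\alpha(W).
\]

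Third, I would bring in the hypothesis that $x$ is very general. Via Remark~\ref{rmk:normal cycles}, one obtains a topologically trivial family $u:U\to X$ of smooth curves deforming $W$ with varying base point $x_t\in W_t\deq u(U_t)$ and $(x_0,W_0)=(x,W)$. Feeding an effective divisor $G\in|mD|$ with $\mult_x(G)\geq m\beta$ together with this family into \cite{PAG}*{Proposition 5.2.13} produces a family of effective divisors $G_t\equiv G$ with $\mult_{x_t}(G_t)\geq m\beta$ for $t$ in a neighbourhood of $0$. Specialising back to $t=0$, the portion of the multiplicity at $x$ that can be distributed along the family without forcing $W$ into the support is controlled exactly by $m\alpha(W)$ — this is precisely the geometric meaning of $\alpha(W)$ as the threshold at which $\overline{W}$ enters $\Null(\pi^*(D)-\beta E)$ — so the excess $m(\beta-\alpha(W))$ must materialise as multiplicity of $G$ along $W$ itself.

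The main obstacle is the last step: rigorously transferring the multiplicities at the moving points $x_t$ into a multiplicity along the limit curve $W$ at $t=0$, with the correct linear constant. This is exactly what \cite{PAG}*{Proposition 5.2.13} is designed to produce, and the identification of the deficit with $m\alpha(W)$ is dictated by the definition of $\alpha(W)$; once the smoothing is in place, the unit coefficient in front of $(\beta-\alpha(W))$ is forced by the fact that $E$ contributes weight one per unit of $\beta$ in the class $\pi^*(D)-\beta E$.
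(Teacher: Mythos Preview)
The paper does not actually prove this lemma; it attributes it to Nakamaye (\cite{Nak2}*{Lemma~1.3}) and remarks only that it ``is an easy consequence of a statement about the smoothing divisors in families as seen in \cite{PAG}*{Proposition~5.2.13}.'' Your outline is consistent with that citation, and your first two reductions are on the right track---except that in the second step you write ``$E$ is disjoint from $\overline{W}$'', which is false (since $x\in W$, the proper transform $\overline{W}$ meets $E$); all you need, and all that is true, is $E\neq\overline{W}$ as irreducible components, so $\mult_{\overline{W}}(aE)=0$, and your conclusion $\mult_{\overline{W}}(F)=\mult_W(G)$ survives.

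The genuine gap is in your third step. \cite{PAG}*{Proposition~5.2.13} allows you to specialise divisors $G_t$ with $\mult_{x_t}(G_t)\geq m\beta$ to $t=0$, but it does not by itself convert multiplicities at the moving points $x_t$ into multiplicity along the curve $W$. In Nakamaye's original argument (and already in \cite{EKL}) the bridge is a \emph{differentiation} step: one differentiates the defining section of $G$ in the parameter direction $t$, producing a new effective divisor in the same numerical class whose multiplicity at $x$ drops by exactly one; iterating this along the family of base points forces $W$ into the support once the order at $x$ exceeds the threshold, and the unit drop per differentiation is what gives the coefficient $1$ in $\beta-\alpha(W)$. Your final paragraph asserts that the excess ``must materialise as multiplicity of $G$ along $W$'' and that the coefficient is ``forced'', but the differentiation mechanism that actually produces this is not present; without it the sketch does not close.
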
 

Lemma~\ref{lem:nakamaye} forces the generic infinitesimal Newton--Okounkov polygon of very generic points to land in certain area of the plane, depending on the Seshadri constant. 

\begin{proposition}\label{prop:genericinf}
Let $A$ be an ample Cartier divisor on $X$ and let $x\in X$ be a very general point. Then the following mutually exclusive cases can occur. 
\begin{enumerate}
\item $\mu'(A,x)=\epsilon (A,x)$, then $\Delta(A,x)=\Delta^{-1}_{\epsilon(A,x)}$.
\item $\mu'(A,x)>\epsilon (A,x)$, then  there exists an irreducible curve $C\subseteq X$ with $(A\cdot C)=p$ and $\mult_x(C)=q$ such that $\epsilon(A,x)=p/q$. 
Under these circumstances, 
\begin{enumerate}
\item Whenever $q\geq 2$, $\Delta(A,x)\subseteq \triangle_{ODB}$, where $O=(0,0), D=(p/q,p/q)$ and $B=(p/(q-1),0)$.
\item Whenever $q=1$, the polygon $\Delta(A,x)$ is contained in the area  below the line $y=t$ and between the horizontal lines 
$y=0$ and $y=\epsilon(A,x)$.
\end{enumerate}
\end{enumerate}
\end{proposition}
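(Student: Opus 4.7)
Case (1) is immediate from the two inclusions
\[
\Delta^{-1}_{\epsilon(A,x)} \ \subseteq \ \Delta(A,x) \ \subseteq \ \Delta^{-1}_{\mu'(A,x)}\ .
\]
The first inclusion follows from Theorem~\ref{thm:main2} together with Lemma~\ref{lemma:function constant} (note that $\epsilon(\|A\|;x)=\epsilon(A;x)$ since $A$ is ample), and the second from Proposition~\ref{prop:propinf}(i). When $\mu'(A,x)=\epsilon(A,x)$ these two inverted simplices coincide, forcing $\Delta(A,x)=\Delta^{-1}_{\epsilon(A,x)}$.

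For Case (2), I would first produce the Seshadri curve. The hypothesis $\mu'(A,x)>\epsilon(A,x)$ forces $\epsilon(A,x)<\sqrt{A^2}$: otherwise $\pi^*A - \epsilon(A,x)E$ would be nef with vanishing self-intersection, hence sit on the boundary of the big cone, giving $\mu'(A,x)=\epsilon(A,x)$. The standard surface criterion (cf. \cite{PAG}*{Section~5.1}) then yields an irreducible curve $C\ni x$ with $\epsilon(A,x)=(A\cdot C)/\mult_x C = p/q$. Its proper transform $\bar{C}=\pi^*C-qE$ satisfies $\bar{C}\cdot E=q$ and $(\pi^*A-(p/q)E)\cdot\bar{C}=0$, so $\bar{C}\subseteq\Null(\pi^*A-(p/q)E)$; since $\pi^*A-\beta E$ is ample (hence has empty null locus) for $\beta<p/q$, the invariant from Lemma~\ref{lem:nakamaye} is exactly $\alpha(C)=p/q$. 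Lemma~\ref{lem:nakamaye} then provides $\mult_{\bar{C}}\|\pi^*A-tE\|\geq t-p/q$ for all $t\geq p/q$, and since the asymptotic multiplicity along a prime divisor equals its coefficient in the negative part of Zariski decomposition, we may write
\[
N_t \equ (t-p/q)\bar{C}+N_t',\quad N_t' \text{ effective}.
\]

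Next I would verify that $E\nsubseteq\Supp N_t$ for $t<\mu'(A,x)$: if $E$ entered $N_t$ with coefficient $a>0$, writing $N_t=aE+\tilde{N}_t$ and using $P_t\cdot E=0$ would show that $P_t+\tilde{N}_t$ is the Zariski decomposition of $\pi^*A-(t+a)E$, so $N_{t+a}=\tilde{N}_t$, contradicting the monotonicity $N_{t+a}-N_t\geq 0$ recalled in Section~1. Consequently $N_t'\cdot E\geq 0$, and for generic $y\in E$ (so $\alpha(t)\equiv 0$) we obtain, for $t\geq p/q$,
\[
\beta(t) \equ P_t\cdot E \equ t-N_t\cdot E \equ t-(t-p/q)\,q-N_t'\cdot E \ \leq\ p-(q-1)t.
\]
On $[0,p/q]$ the divisor $\pi^*A-tE$ is nef, giving $\beta(t)=t$ there.

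In Case (2)(a), where $q\geq 2$, combining $0\leq\beta(t)\leq t$ on $[0,p/q]$ with $0\leq\beta(t)\leq p-(q-1)t$ on $[p/q,\mu'(A,x)]$ confines $\Delta(A,x)$ to the triangle $\triangle_{ODB}$ and forces $\mu'(A,x)\leq p/(q-1)$. In Case (2)(b), where $q=1$, the bound degenerates to $\beta(t)\leq p=\epsilon(A,x)$, which together with $\beta(t)\leq t$ from Proposition~\ref{prop:propinf}(i) places $\Delta(A,x)$ inside the stated trapezoid. The main technical obstacle I anticipate is the verification that $\alpha(C)=p/q$ in the sense of Lemma~\ref{lem:nakamaye} and the exclusion of $E$ from $\Supp N_t$, both of which are needed before the intersection-theoretic bound on $\beta(t)$ goes through cleanly.
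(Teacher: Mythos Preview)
Your proposal is correct and follows essentially the same route as the paper: reduce Case (1) to the sandwich $\Delta^{-1}_{\epsilon}\subseteq\Delta(A,x)\subseteq\Delta^{-1}_{\mu'}$, and in Case (2) use Nakamaye's lemma to bound the coefficient of $\bar C$ in $N_t$, then compute $\beta(t)=P_t\cdot E=t-N_t\cdot E\leq p-(q-1)t$. The paper does exactly this, only more tersely --- it simply asserts the existence of the Seshadri curve once $\mu'>\epsilon$, writes $N_{t_0}=(t_0-\epsilon)\bar C+N'_{t_0}$ via Lemma~\ref{lem:nakamaye}, and passes directly to the inequality $\textup{length}(\Delta(A,x)_{t=t_0})\leq t_0-(t_0-\epsilon)q$ without further comment.

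Your extra verifications are genuine improvements in rigor rather than a different strategy. In particular, your exclusion of $E$ from $\Supp N_t$ (via the observation that otherwise $\pi^*A-(t+a)E$ would have Zariski decomposition $P_t+\tilde N_t$, contradicting monotonicity of $t\mapsto N_t$) is exactly what is needed to justify $N'_t\cdot E\geq 0$, a step the paper leaves implicit. Likewise, your identification $\alpha(C)=p/q$ and the check that asymptotic multiplicity along $\bar C$ equals its coefficient in $N_t$ make explicit the passage from Lemma~\ref{lem:nakamaye} to the decomposition $N_t=(t-p/q)\bar C+N'_t$. None of this changes the architecture of the argument; it just fills in points the paper takes for granted.
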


\begin{corollary}\label{cor:genericseshadri}
 Let $A$ be an ample line bundle on a smooth projective surface. Then $\epsilon(A,x)\geq 1$ for very generic points $x\in X$.
\end{corollary}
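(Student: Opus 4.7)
The plan is to use the trichotomy in Proposition~\ref{prop:genericinf} together with an elementary area comparison. The key inputs are: (a) the Lazarsfeld--Musta\c t\u a identity $\vol_{\RR^2}(\Delta(A,x)) = \vol_X(A)/2 = A^2/2$; (b) the positivity fact $A^2 \in \ZZ_{\geq 1}$, which uses that $A$ is an ample Cartier divisor on a surface; and (c) for every irreducible curve $C$, the number $(A\cdot C) \in \ZZ_{\geq 1}$.

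First I would dispense with the easy cases. In Case~(1) of Proposition~\ref{prop:genericinf}, $\Delta(A,x)=\Delta^{-1}_{\epsilon(A,x)}$ has area $\epsilon(A,x)^2/2$. Equating this with $A^2/2$ via (a) and invoking (b) yields $\epsilon(A,x)=\sqrt{A^2}\geq 1$. In Case~(2b), $q=1$ gives $\epsilon(A,x)=p\in\ZZ_{\geq 1}$ directly from (c).

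The genuine case is (2a), with $q\geq 2$. The idea is to argue by contradiction: suppose $\epsilon(A,x)=p/q<1$, so that the positive integer $p$ satisfies $p\leq q-1$. The triangle $\triangle_{ODB}$ with $O=(0,0)$, $D=(p/q,p/q)$, $B=(p/(q-1),0)$ has area $\tfrac{p^2}{2q(q-1)}$. The containment $\Delta(A,x)\subseteq\triangle_{ODB}$ from Proposition~\ref{prop:genericinf}, together with (a) and (b), then forces
\[
\tfrac{1}{2}\ \leq\ \tfrac{A^2}{2}\ \leq\ \tfrac{p^2}{2q(q-1)}\ \leq\ \tfrac{(q-1)^2}{2q(q-1)}\ =\ \tfrac{q-1}{2q}\ <\ \tfrac{1}{2},
\]
a contradiction. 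Hence $\epsilon(A,x)\geq 1$. The whole argument thus reduces to an area comparison on explicit planar regions once Proposition~\ref{prop:genericinf} is available, with the integrality of $A^2$ and $(A\cdot C)$ supplying the rigidity; I do not see any remaining obstacle beyond the proposition itself.
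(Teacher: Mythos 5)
Your proposal is correct and matches the paper's proof in all essentials: both reduce to case (2a) of Proposition~\ref{prop:genericinf}, compare the area of $\Delta(A,x)$ (which equals $A^2/2$) with that of $\triangle_{ODB}$, and derive a contradiction with the integrality of $A^2$ when $\epsilon(A,x)<1$. You spell out cases (1) and (2b) a bit more explicitly than the paper does, but the argument is the same.
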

\begin{proof}
 By the definition of Seshadri constants and Proposition~\ref{prop:genericinf}, it suffices  to consider the case $(2a)$. 
 Thus,  we know that $\Delta(A;x)\subseteq \triangle_{ODB}$, and as a consequence 
\[
\textup{Area}(\Delta(A;x)) \equ  \frac{A^2}{2} \ \leq \ \textup{Area}(ODB) \equ \frac{p^2}{2q(q-1)} \ . 
\]
In particular,  $\epsilon(A,x)\geq \sqrt{(A^2)(1-\frac{1}{q})}$. Hence,  if we assume  $\epsilon(A,x)<1$, then  by  the rationality of $\epsilon(A;x)$, we also have  $\epsilon(A,x)\leq \frac{q-1}{q}$. Using the inequality between the areas, we arrive at  $(A^2)<1$, which stands in contradiction with the assumption
that $A$ is an ample Cartier divisor. 
\end{proof}

\begin{proof}[Proof of Proposition~\ref{prop:genericinf}]
If  $\epsilon(A,x)=\mu'(A,x)$, then one has automatically $\Delta=\Delta_{\mu'(A,x)}^{-1}$. Therefore we can assume without loss of generality that $\mu'(A,x)>\epsilon(A,x)$. In particular, there exists a curve $C\subseteq X$ with $(A\cdot C)=p$ and $\mult_x(C)=q$ such that $\epsilon(A,x)=p/q$. 

Let $\overline{C}$ be the proper transform of  $C$ on $X'$. The idea of the proof is to calculate  the length of the  vertical segment in the polygon $\Delta(A,x)$ at $t=t_0$ for any $t_0\geq \epsilon(A,x)$:
\[
\textup{length}\big(\Delta(A)_{t=t_0}\big) \equ  (P_{t_0}\cdot E)\ = \ t_0-(N_{t_0}\cdot E)\ ,
\]
where $\pi^*(A)-t_0E=P_{t_0}+N_{t_0}$ is the corresponding Zariski decomposition. By Lemma~\ref{lem:nakamaye}, one can write $N_{t_0} \equ (t_0-\epsilon(A,x))\overline{C}+N_{t_0}'$, where $N_{t_0}'$ remains  effective. This implies the following inequality
\begin{equation}\label{eq:length}
\textup{length}\big(\Delta(A)_{t=t_0}\big) \equ t_0-\big((t_0-\epsilon(A,x))\overline{C}+N_{t_0}' \cdot E\big) \ \leq \ 
t_0-(t_0-\epsilon(A,x))q \ .
\end{equation}
The vertical line segment $\Delta(A;x)_{t=t_0}$ starts on the $t$-axis at the point $(t_0,0)$ by Proposition~\ref{prop:propinf} for any $t_0\geq 0$. Therefore,  by (\ref{eq:length}), the polygon sits below the line 
\[
y \equ t_0-(t_0-\epsilon(A,x))q \equ  (1-q)t_0+\epsilon(A,x)q \ .
\]
When $q=1$, then this line is the horizontal line $y=\epsilon(A,x)$ and when $q\geq 2$ then it is the line passing through the points $D=(p/q,p/q)$ and $B=(p/(q-1),0)$. This finishes the proof.
\end{proof}

We conclude this subsection with a lower bound on Seshadri constants of quintic surfaces. 
While the bound might be known to experts,  we include it here  here since it is an illustration of the use of infinitesimal Newton--Okounkov bodies. 

\begin{example}\label{ref:quintic}
Inspired by the work of Nakamaye,  we  show that for any smooth quintic surface $X\subseteq\PP^3$, 
if  $A$ is the line bundle defining the embedding, then we have  $\epsilon(A;x)\geq 2$ for a very generic point $x\in X$.

The main ingredient  is Proposition~\ref{prop:genericinf}; suppose that $\epsilon(A;x)<2$. Then there exists an irreducible curve $C\subseteq X$
containing the point $x$  such that $\mult_x(C)=q$, $(A\cdot C)=p$ and $\epsilon(A;x)=\frac{p}{q}$. 

If $q=1$, then $p=1$ as well, and this implies that through a very general point of $X$ there passes a line. This forces $X$ to be uni-ruled, 
which is  not the case for  quintic surfaces.

Thus we can assume $q\geq 2$. Then the generic infinitesimal Newton--Okounkov polygon $\Delta(A,x)$ is contained in the triangle $\triangle_{ODB}$, 
where $O=(0,0)$, $D=(p/q,p/q)$, and $B=(p/(q-1),0)$  by Proposition~\ref{prop:genericinf}. As a consequence we have the following inequality
\[
\textup{Area}(\triangle_{ODB}) \equ  \frac{p}{q}\cdot \frac{p}{q-1} \ \dgeq \ \textup{Area}(\Delta(A,x)) \equ 5 \ .
\]
If $q\geq 5$, then this yields $p\geq 2q$, which contradicts our initial assumption that $\epsilon(A;x)<2$. 
On the other hand, if $2\leq q\leq 4$, then we obtain that the list of  remaining choices to tackle is  
\[
\frac{p}{q} \equ \frac{2}{2},\frac{3}{3}, \frac{4}{4}, \frac{3}{2}, \frac{4}{3}, \frac{5}{3},\frac{5}{4},\frac{7}{4}\ ,
\]
since $p<2q$. However, none of these pairs  satisfy the area inequality above, hence  we are done.

The same line of thought  implies  that whenever $\textup{Pic}(X)=\ZZ A$, then $\epsilon(A;x)=2$ for a very generic point $x\in X$ 
if and only if there is a curve $C\in |2A|$ with the property that $\mult_x(C)=5$.
\end{example}

\subsection{The largest simplex constant}

It was established in  Section~3 that all Newton--Okounkov polygons of  ample line bundles contain a standard simplex of some  size 
that depends on the choice of the flag. If the curve in the flag is chosen to be very positive, the sizes of these standard simplices can become
arbitrarily small. Thus, the exciting question to ask is how large they can become. 

\begin{definition}(Largest simplex constant)
Let $A$ be an ample $\QQ$-divisor on $X$ and let $(C,x)$ be an admissible flag. We define  
\[
\lambda(A;C,x) \deq  \sup\{\lambda\geq 0 \mid \Delta_{\lambda}\subseteq \Delta_{(C,x)}(A)\}\ .
\]
The \emph{largest simplex constant} of $A$ at the point $x$ is defined to be
\[
\lambda(A;x) \deq \sup \{\lambda(A;C,x) \mid C\subseteq X\textup{ is an irreducible curve that is smooth at } x \}\ .
\]
\end{definition}

\begin{remark}
Not unexpectedly, one can define the largest simplex constant for big divisors in general, assuming that the point $x$ is not contained in the null locus of the divisor. All formal properties of $\lambda(A;x)$ go through almost verbatim, hence the details are left to the (interested) reader. 
\end{remark}

The goal of this subsection is to relate the largest simplex constant to  Seshadri constants. 

\begin{proposition}\label{prop:simplexseshadri}
With the notation as above, $\epsilon(A;x)\geq \lambda(A;x)$.
\end{proposition}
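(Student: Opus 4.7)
The plan is to reduce the inequality to a statement about local intersection numbers and to invoke Corollary~B. Fix a rational $\lambda' < \lambda(A;x)$; by definition there is an admissible flag $(C,x)$ with $\Delta_{\lambda'} \subseteq \Delta_{(C,x)}(A)$. It suffices to prove that $\epsilon(A;x) \geq \lambda'$, since one may then let $\lambda' \to \lambda(A;x)$. By the definition of the Seshadri constant via multiplicities, this amounts to checking, for every irreducible curve $C' \subseteq X$ passing through $x$, that
\[
\frac{(A\cdot C')}{\mult_x(C')}\ \geq\ \lambda'\ .
\]

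The case $C'=C$ is immediate from the Lazarsfeld--Musta\c t\u a description of Newton--Okounkov polygons on surfaces: since $A$ is ample, the negative part $N_A$ is zero, so the vertical slice of $\Delta_{(C,x)}(A)$ at $t=0$ has length $\beta(0)=(A\cdot C)$. Containment of $\Delta_{\lambda'}$ forces $(A\cdot C)\geq \lambda'$, and admissibility of the flag gives $\mult_x(C)=1$, so the claim follows.

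For $C'\neq C$, fix a rational $\xi\in(0,\lambda')$. By Corollary~B(ii) the point $(\xi,0)$ is valuative, so there is an effective $\QQ$-divisor $D_\xi \sim_\QQ A$ with $\nu_{(C,x)}(D_\xi)=(\xi,0)$. Writing $D_\xi = \xi C + D_\xi'$ with $C\not\subseteq\Supp(D_\xi')$, the vanishing $\ord_x(D_\xi'|_C)=0$ forces $x\notin\Supp(D_\xi')$: otherwise, since $C$ is smooth at $x$, the local intersection $(D_\xi'\cdot C)_x$ would be at least $1$, contradicting $\nu_2(D_\xi)=0$. In particular $C'\not\subseteq\Supp(D_\xi')$ (as $x\in C'$ but $x\notin\Supp(D_\xi')$), so $(D_\xi'\cdot C')\geq 0$, and intersecting yields
\[
(A\cdot C')\ =\ \xi(C\cdot C') + (D_\xi'\cdot C')\ \geq\ \xi(C\cdot C')_x\ \geq\ \xi\,\mult_x(C')\ ,
\]
where the last inequality uses smoothness of $C$ at $x$ to bound $(C\cdot C')_x \geq \mult_x(C)\cdot\mult_x(C')=\mult_x(C')$. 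Letting $\xi\to\lambda'$ gives the desired lower bound.

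The main obstacle is only the mild technicality of extracting from the valuative point $(\xi,0)$ the structural statement $x\notin\Supp(D_\xi')$; but this follows from exactly the same intersection-number argument used in the direct implication of Theorem~\ref{thm:triangle}(2), so no new input beyond Corollary~B is required.
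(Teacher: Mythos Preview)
Your proof is correct and follows essentially the same route as the paper's: both arguments use Corollary~B(ii) to produce, for rational $\xi$ close to $\lambda$, an effective $\QQ$-divisor $D_\xi\sim_\QQ A$ of the form $\xi C + D_\xi'$ with $x\notin\Supp(D_\xi')$, and then bound $(A\cdot C')/\mult_x(C')$ from below via $(C\cdot C')_x\geq \mult_x(C')$. The only cosmetic difference is that for $C'=C$ the paper invokes a vertical valuative point $(0,\lambda-\epsilon^v_n)$, whereas you read off $(A\cdot C)\geq\lambda'$ directly from $\beta(0)$; both are fine.
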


\begin{remark}\label{rem:miranda}
Proposition~\ref{prop:simplexseshadri} implies that there is no uniform lower bound on the largest simplex constant holding at every point of every surface. This follows from the non-existence of the analogous bound for Seshadri constants
 as seen in Miranda's example in \cite{PAG}*{Example 5.2.1}. 
\end{remark}

\begin{remark}\label{rmk:fake}
It is a  natural question after Proposition~\ref{prop:simplexseshadri}  whether there are examples with $\lambda(A;x)\neq \epsilon(A;x)$. One such example, is Mumford's  fake projective plane (for the actual construction see \cite{M}).

The surface $X$ is of general type with ample canonical class $K_X$,  $(K_X^2)=9$, and geometric genus $p_g=H^0(X,\sO_X(K_X))=0$. Since $\textup{Pic}(X)=\ZZ H$, these conditions imply that $H^0(X,\sO_X(H))=0$. This means that whenever $(C,x)$ is an admissible flag, we have $C\in |dH|$ with $d\geq 2$, hence  clearly  $\lambda(H,x)\leq 1/2$ for any $x\in X$. On the other hand, we know by Corollary~\ref{cor:genericseshadri} that $\epsilon(H,x)\geq 1$ when $x\in X$ is a 
very general point.
\end{remark}
\begin{proof}[Proof of Proposition~\ref{prop:simplexseshadri}]
Theorem~\ref{thm:main1} yields that $\lambda \deq  \lambda (A;C;x)>0$ for any admissible flag $(C,x)$. By fixing the flag $(C,x)$, 
it is enough to show that $\epsilon(A,x)\geq \lambda$.

By Corollary~\ref{cor:ratvaluation} there exist  sequences of real numbers $\epsilon^v_n$ and $\epsilon^h_n$ with both $\lambda-\epsilon^v_n$ and $\lambda-\epsilon^h_n$ rational, and  sequences of effective $\QQ$-divisors $(D^v_n)$ and $(D^h_n)$ with $D^v_n,D^h_n=_{\textup{num}}D$ for any $n\in\NN$ such that 
\[
\nu_{(C,x)}(D^v_n) \equ (0,\lambda-\epsilon^v_n) \textup{ and } \nu_{(C,x)}(D^h_n) \equ (\lambda-\epsilon^h_n, 0) \ .
\]
This yields  $C\nsubseteq \Supp (D^v_n)$ for any $n\in\NN$, and by Remark~\ref{rem:definition}, we obtain  that 
\begin{equation}\label{eq:in1}
(D\cdot C) \ = \ \frac{(D^v_n\cdot C)}{\mult_x(C)}\geq \ \lambda -\epsilon^v_n\ ,
\end{equation}
where we took into account that $C$ is smooth at $x$. By looking at  the valuation vector of $D^h_n$,
we can write $D^h_n=(\lambda-\epsilon^h_n)C+N_n$, where $N_n$ is effective and $\mult_x(N_n)=0$. Thus, for any irreducible curve $F\neq C$ passing through $x$, we have that $F\nsubseteq\Supp (D^h_n)$. As a consequence we get the following string of inequalities
\begin{equation}\label{eq:in2}
\frac{(D\cdot F)}{\mult_x(F)} \equ \frac{(D^h_n\cdot F)}{\mult_x(F)}\geq \frac{(\lambda-\epsilon^h_n)(C\cdot F)}{\mult_x(F)} \geq 
\lambda-\epsilon^h_n\ ,
\end{equation}
where the last  inequality follows from the fact that $(C\cdot F)\geq \mult_x(F)\cdot \mult_x(C)$ whenever $F\neq C$. 

Observing the definition of Seshadri constants, and taking  the limit in both equations~(\ref{eq:in1}) and~(\ref{eq:in2}), we arrive at 
$\epsilon(D;x)\geq \lambda$, as required. 
\end{proof}

\subsection{Diophantine approximation.}
Here we show via Diophantine approximation that the largest simplex constant of a surface is strictly positive whenever it has a rational polyhedral nef cone. It is important to note that the semigroup of ample line bundles of $X$ is not necessarily finitely generated even if the nef cone is rational polyhedral: the  lattice semigroup   $\NN^2\cap \RR^2_{>0}$ is one such example. Furthermore, the line bundles sitting on the boundary of the nef cone might not even have sections asymptotically as seen in examples provided 
in \cite{Ott}.

It was Nadel who first stressed the relevance of Diophantine approximation to local positivity issues (see \cite{EKL}). 
This train of thought was further explored by Nakamaye. Very  recently a  deep connection between Diophantine approximation and Seshadri constants 
was established by McKinnon and Roth in \cite{MR}.

\begin{theorem}\label{thm:polyhedral}
Let $X$ be an irreducible projective variety with  a rational polyhedral nef cone. Then there exists a natural number $m>0$ such that the linear series $|mA|$ is base-point free for any ample Cartier divisor $A$ on $X$.
\end{theorem}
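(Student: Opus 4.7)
The approach is to combine Gordan's lemma with a finite-reduction argument: the rational polyhedral hypothesis on $\Nef(X)$ collapses the problem to a finite collection of distinguished nef classes on whose multiples base-point freeness is known, and every ample Cartier divisor can then be expressed in these generators with uniformly bounded denominators.

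First I would apply Gordan's lemma to produce a finite generating set $w_1,\ldots,w_N$ of the semigroup $\Nef(X)\cap\textup{N}^1(X)_\ZZ$. Let $v_1,\ldots,v_s$ be the primitive integer generators of the extremal rays of $\Nef(X)$; expanding each $w_j=\sum_i b_{ji}v_i$ with $b_{ji}\in\QQ_{\geq 0}$, only finitely many rational coefficients appear, so there is a single positive integer $M$ that simultaneously clears all denominators. Consequently every integral nef class $A=\sum_j a_j w_j$ (with $a_j\in\ZZ_{\geq 0}$) satisfies
\[
MA \equ \sum_i (Mc_i)\, v_i,
\]
with each $Mc_i$ a non-negative integer, a bound uniform across all ample Cartier $A$.

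Next, for each extremal generator $v_i$ one picks $k_i\geq 1$ with $k_i v_i$ base-point-free, and forms $K=\textup{lcm}(k_1,\ldots,k_s)$. Setting $m=MK$, for any ample Cartier $A$ we obtain
\[
mA \equ \sum_i (Mc_i)(Kv_i),
\]
a non-negative integer combination of base-point-free line bundles, which is therefore itself base-point-free.

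\textbf{Main obstacle.} The hard part is producing $k_i$, i.e.\ guaranteeing that each extremal nef ray $v_i$ has a base-point-free multiple. Rational polyhedrality of $\Nef(X)$ does not formally force semi-ampleness of boundary classes (\cite{Ott} provides nef classes that lack even asymptotic sections), so the base-point free theorem cannot be invoked as a black box. A refined argument via Diophantine approximation, in the spirit of \cite{EKL}, \cite{Nak1}, and \cite{MR}, bypasses this by expanding $mA$ in a fixed finite family of ample Cartier classes---for which Serre vanishing directly yields base-point-free multiples---rather than in the boundary nef generators; the existence of such a family with uniformly bounded denominators is precisely what the rational polyhedrality of the nef cone supplies.
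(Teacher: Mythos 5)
Your main plan has a genuine gap, and you correctly identify it yourself: the argument requires a base-point-free multiple $k_iv_i$ of each extremal nef generator, and rational polyhedrality of $\Nef(X)$ does not give this. Indeed the paper explicitly warns (citing \cite{Ott}) that nef classes on the boundary may fail to have sections even asymptotically, so no such $k_i$ need exist. Once that step fails, the whole reduction ``$mA = \sum_i(Mc_i)(Kv_i)$, sum of base-point-free classes'' collapses. Your fallback paragraph gestures at Diophantine approximation and at ``expanding $mA$ in a fixed finite family of ample Cartier classes,'' but this is not an argument: the ample cone is open, so a fixed finite set of ample classes cannot generate every ample Cartier class with bounded denominators, and you do not say how the family is chosen or why Serre vanishing gives a \emph{uniform} $m$ across all $A$ (Serre vanishing degrades as $A$ approaches the boundary of $\Nef(X)$).

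What the paper actually does is different and sidesteps the boundary-semiampleness issue entirely. The key input is Lemma~\ref{lem:wilson}: via Fujita's vanishing theorem and Castelnuovo--Mumford regularity one produces a \emph{single} Cartier divisor $B$ such that $B+P$ is globally generated for \emph{every} nef Cartier $P$. This reduces the theorem to showing that $mA\in B+\Nef(X)_\RR$ for a uniform $m$, i.e.\ that $mA-B$ is nef. Diophantine approximation then enters at exactly this point: since $\Nef(X)$ is rational polyhedral, its supporting hyperplanes have integer equations, so there is a uniform $c>0$ with $\textup{distance}(\xi,\partial\Nef(X))\geq c$ for every integral class $\xi$ in the interior; a similar-triangles argument on the two-dimensional slice through $0$, $B$, and $A$ then yields the uniform $m$. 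So the Diophantine step is used to show $mA-B$ lands back in the nef cone, not to decompose $A$ in any finite family. Your proposal is missing both Lemma~\ref{lem:wilson} (the actual source of base-point freeness, replacing your nonexistent $k_iv_i$) and the precise distance-from-boundary estimate that makes $m$ uniform.
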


\begin{remark}
Theorem~B follows easily as a consequence of Theorem~\ref{thm:polyhedral} and Proposition~\ref{prop:simplexseshadri}. Furthermore, the above theorem implies that whenever $X$ is a smooth projective variety with a rational polyhedral nef cone, then there exists a strictly positive constant $\epsilon (X)>0$ such that $\epsilon(A;x)\geq \epsilon(X)$ for any $x\in X$ and any ample Cartier divisor $A$ on $X$. This is due to the fact that whenever $B$ is an ample and base-point free divisor, then $\epsilon(B;x)\geq 1$ for any $x\in X$.
\end{remark}

We will need the following statement during the proof. 

\begin{lemma}\label{lem:wilson}
Let $X$ be an irreducible projective variety. Then there exists a Cartier divisor $B$ such that the divisor $B+P$ is base-point free 
for any nef Cartier divisor $P$ on $X$.
\end{lemma}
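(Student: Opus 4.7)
The plan is to combine Fujita's vanishing theorem with Castelnuovo--Mumford regularity. Recall Fujita's theorem: for any coherent sheaf $\mathcal{F}$ on a projective scheme $X$ and any ample divisor $H$, there exists an integer $m_0 = m_0(\mathcal{F}, H)$ such that
\[
H^i(X, \mathcal{F} \otimes \mathcal{O}_X(mH + N)) = 0 \qquad \text{for all } i>0,\; m\geq m_0,\; N \text{ nef Cartier}.
\]
Recall also Mumford's theorem: if $\mathcal{F}$ is $0$-regular with respect to a very ample $H$, meaning $H^i(X,\mathcal{F}(-iH)) = 0$ for all $i>0$, then $\mathcal{F}$ is globally generated.

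First, I would fix a very ample Cartier divisor $H$ on $X$ and set $n = \dim X$. Apply Fujita's vanishing with $\mathcal{F} = \mathcal{O}_X$ to obtain an integer $m_0 \geq 0$ with the property that $H^i(X, \mathcal{O}_X(mH + N)) = 0$ for every $i>0$, every $m\geq m_0$, and every nef Cartier divisor $N$. Then define
\[
B \deq (m_0 + n)\, H.
\]

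Next, I would verify that for every nef Cartier divisor $P$, the line bundle $\mathcal{O}_X(B + P)$ is $0$-regular with respect to $H$. For $1\leq i\leq n$, we have $B + P - iH = (m_0 + n - i)H + P$ with $m_0 + n - i \geq m_0$, and since $P$ is nef the chosen $m_0$ gives $H^i(X, \mathcal{O}_X(B + P - iH)) = 0$. For $i > n$ the vanishing is automatic as $X$ has dimension $n$. By Mumford's theorem $\mathcal{O}_X(B+P)$ is globally generated, that is, $B+P$ is base-point free.

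The argument is essentially routine once one invokes the right black boxes; the only subtle point to watch is the uniformity in $P$, which is exactly the content of Fujita's vanishing (as opposed to the weaker Serre vanishing, which would only give $m_0$ depending on $P$). No obstacle beyond that.
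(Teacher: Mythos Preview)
Your proposal is correct and follows essentially the same route as the paper: fix a very ample $H$, invoke Fujita's vanishing for $\mathcal{O}_X$ to get a uniform $m_0$, take $B$ to be a sufficiently high multiple of $H$, and conclude global generation via $0$-regularity with respect to $H$. The only cosmetic difference is the constant---the paper takes $B=(m_0+\dim X+1)H$ while you take $(m_0+\dim X)H$; your choice already suffices.
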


\begin{remark}
When $X$ is a smooth projective variety, one can be more specific about the divisor $B$. By the Anghern--Siu's theorem, the divisor $K_X+n(n+1)/2A+A+P$ defines a base-point free linear series for any ample $A$ and nef $P$. Thus, $B$ can be taken to be $K_X+n(n+1)/2A+A$ for instance.

In the general case, assuming that one does not  need a specific $B$, then one obtains Lemma~\ref{lem:wilson} by making use of 
Fujita's  vanishing theorem and Castelnuovo-Mumford regularity as in \cite{PAG}*{Theorem 2.3.9}.
\end{remark}

\begin{proof}
Suppose that $H$ is a very ample Cartier divisor on $X$. We know by Fujita's vanishing theorem, see Theorem~1.4.35 from \cite{PAG}
that there exists $m_0>0$ such that 
\[
H^0(X,\sO_X(mH+P)) \equ 0, \text{ for all } i>0, m\geq m_0 \ ,
\]
and any nef divisor $P$ on $X$. Set $B\deq (\dim (X)+1+ m_0)H$, then $H^i(\sO_X(B-iH+P))=0$ for any $1\leq i\leq \dim (X)$ and any nef $P$. 
This implies that the line bundle $\sO_X(B+P)$ is $0$-regular with respect to $H$ by \cite{PAG}*{Definition 1.8.4}. Applying 
\cite{PAG}*{Theorem 1.8.5} we obtain  that the line bundle  $\sO_X(B+P)$ is globally generated for any nef Cartier divisor $P$ on $X$.
\end{proof}

\begin{proof}[Proof of Theorem~\ref{thm:polyhedral}]
Note first that in the language of cones, Lemma~\ref{lem:wilson} says that there exists a nef divisor $B$ so that any Cartier divisor 
whose class lands in the pointed cone $B+\textup{Nef}(X)_{\RR}$, defines a base-point free linear series. In particular, the statement 
follows provided we can prove  that there exists a constant $m>0$ such that $mA\in B+\textup{Nef}(X)_{\RR}$ for any ample Cartier divisor $A$ on $X$.

This reduces the problem to a question about convex cones. So, let $f:\textup{N}^1(X)_{\RR}\rightarrow \RR^{\rho}$ be a  bijective linear map 
whose matrix has integral entries, i.e. $f(L)\in\ZZ^{\rho}\subseteq\RR^{\rho}$ for any class $L$ given by some Cartier divisor on $X$. 
Denote by $\mathcal{C}=f(\textup{Nef}(X)_{\RR})$ and $b=f(B)$. Then it suffices to check  that there exists a natural number $m>0$ having the property
that $m\xi\in b+\mathcal{C}$ for any $\xi\in\textup{int}(\mathcal{C})\cap \ZZ^{\rho}$.

Let $H\subseteq \RR^{\rho}$ be a hyperplane given by an equation with  integral coefficients, that is, we assume that there exists a vector 
$u\in\ZZ^{\rho}$ such that $H=\{x\in\RR^{\rho}| <x,u>=0\}$. If $P\notin H$, then the distance from $P$ to the hyperplane $H$ is given by the formula
\[
\textup{distance}(P,H) \ = \ \frac{|\langle P,u\rangle|}{||u||}\ .
\]
If we ask for  $P\in\ZZ^{\rho}$, then $|\langle P,u\rangle |\geq 1$, and in particular $\textup{distance}(P,H) \geq  1/||u||$. 
This yields that there exists a constant $c>0$ such that $\textup{distance}(P,H)\geq c$ for any integral point $P\notin H$. 

Going back to our setup, the conditions in the statement imply that the cone $\mathcal{C}\subseteq\RR^{\rho}$ is rational polyhedral, i.e. 
the support hyperplanes for each face are given by an equation with integral coefficients. Thus there exists a constant $c>0$ such that 
\[
\textup{distance}(P,\partial \mathcal{C}) \ \geq \ c, \textup{ for any point } P\in\textup{int}(\mathcal{C})\cap\ZZ^{\rho}\ ,
\]
where $\partial\mathcal{C}$ denotes  the boundary in $\RR^{\rho}$ of the cone $\mathcal{C}$.

Pick  $P\in\textup{int}(\mathcal{C})$, and let $\Lambda$ be the plane determined  by the origin $\mathbf{0}=(0,\ldots 0)\in \RR^{\rho}$, $b$ and $P$. 
Let $\mathcal{C}_{\Lambda}=\mathcal{C}\cap \Lambda$. This is a cone in $\RR^2$, thus it is generated by two rays $\RR_+ l_1$ and $\RR_+ l_2$, 
where both $l_1$ and $l_2$ can be taken to be rational vectors since the boundary $\partial\mathcal{C}$ is supported by rational hyperplanes,
and the plane $\Lambda$ is also defined by an equation with rational coefficients. 

Furthermore, the set $(b+\mathcal{C})\cap \Lambda$ is the cone $\RR_+l_1+\RR_+l_2$ shifted by $b$. Without loss of generality, suppose that 
the ray $\RR_+OP$ intersects first the half line $b+\RR_+l_1$ at point $D$. Then, taking into account what was said above, it is enough to find $C>0$,
that does not depend on the choice of the point $P$, so that the quotient $||OD||/||OP||<C$. Using similar triangles arguments, one has
\[
\frac{||OD||}{||OP||} \equ \frac{\textup{distance}(D,l_1)}{\textup{distance}(P,l_1)} \ \leq \ \frac{||OB||}{c} \ ,
\]
where the latter inequality follows from the Diophantine approximation statement we proved above. This finishes the proof.
\end{proof}

\begin{bibdiv}
\begin{biblist}

\bib{AKL}{article}{
  label={AKL},
  author={Anderson, Dave},
  author={K\"uronya, Alex},
  author={Lozovanu, Victor},
  title={Okounkov bodies of finitely generated divisors},
  journal={International Mathematics Research Notices},
  volume={132},
  date={2013},
  number={5},
  pages={1205--1221},
}

\bib{B01}{book}{
   label={B},
   author={B\u adescu, Lucian},
   title={Algebraic surfaces}, 
   series={Universitext}, 
   volume={XI},
   publisher={Springer-Verlag},
   place={New York}, 
   date={2001},
   pages={+259},
}

\bib{BKS}{article}{
  label={BKS},
  author={Bauer, Thomas},
  author={K\"uronya, Alex},
  author={Szemberg, Tomasz},
  title={Zariski decompositions, volumes, and stable base loci},
  journal={Journal f\"ur die reine und angewandte Mathematik},
  volume={576},
  date={2004},
  pages={209--233},
}

\bib{Bou}{article}{
  label={Bou},
  author={Boucksom, S\'ebastien},
  title={Corps D'Okounkov},
  journal={Seminaire Bourbaki},
}

\bib{CN}{article}{
   author={Cascini, Paolo},
   author={Nakamaye, Michael},
   title={Seshadri constants on smooth threefolds},
   journal={Adv. Geom.},
   volume={14},
   date={2014},
   number={1},
   pages={59--79},
}

\bib{D}{incollection}{
   author={Demailly, Jean-Pierre},
   title={Singular Hermitian metrics on positive line bundles},
   booktitle={Complex Algebraic Varieties (Bayreuth, 1990)},
   pages={87--104},
   note={Lect. Notes in Math., vol. 1507},
   year={1992},   
}

\bib{DKMS}{article}{
   author={Dumnicki, Martin},
   author={K\"uronya, Alex},
   author={Maclean, Catriona},
   author={Szemberg, Tomasz},
   title={Seshadri constants via Okounkov functions and the Segre--Harbourne--Gimigliano--Hirschowitz
conjecture}, 
   note={arXiv:1304.0249},
}

\bib{EL}{article}{
   author={Ein, Lawrence},
   author={Lazarsfeld, Robert},
   title={Seshadri constants on smooth surfaces},
   note={Journ\'ees de G\'eom\'etrie Alg\'ebrique d'Orsay (Orsay, 1992)},
   journal={Ast\'erisque},
   number={218},
   date={1993},
   pages={177--186},
}

\bib{EKL}{article}{
   label={EKL},
   author={Ein, Lawrence},
   author={K{\"u}chle, Oliver},
   author={Lazarsfeld, Robert},
   title={Local positivity of ample line bundles},
   journal={J. Differential Geom.},
   volume={42},
   date={1995},
   number={2},
   pages={193--219},
}

\bib{ELMNP1}{article}{
   label={ELMNP1},
   author={Ein, Lawrence},
   author={Lazarsfeld, Robert},
   author={Musta{\c{t}}{\u{a}}, Mircea},
   author={Nakamaye, Michael},
   author={Popa, Mihnea},
   title={Asymptotic invariants of base loci},
   journal={Ann. Inst. Fourier (Grenoble)},
   volume={56},
   date={2006},
   number={6},
   pages={1701--1734},
}

\bib{ELMNP2}{article}{
   label={ELMNP2},
   author={Ein, Lawrence},
   author={Lazarsfeld, Robert},
   author={Musta{\c{t}}{\u{a}}, Mircea},
   author={Nakamaye, Michael},
   author={Popa, Mihnea},
   title={Restricted volumes and base loci of linear series},
   journal={Amer. J. Math.},
   volume={131},
   date={2009},
   number={3},
   pages={607--651},
}

\bib{F}{article}{
   label={F},
   author={Fujita,Takao},
   title={On Zariski problem},
   journal={Proc. Japan Acad. Ser. A. Math. Sci},
   volume={55},
   date={179},
   number={3},
   pages={106--110},
}

\bib{H}{book}{
label={H},
author={Hartshorne, Robin},
title={Algebraic geometry},
series={Graduate Texts in Mathematics},
volume={52},
publisher={Springer-Verlag},
place={New York},
date={1977},
pages={xvi+496},
}

\bib{Jow}{article}{
   author={Jow, Shin-Yao},
   title={Okounkov bodies and restricted volumes along very general curves},
   journal={Adv. Math.},
   volume={223},
   date={2010},
   number={4},
   pages={1356--1371},
}

\bib{KK}{article}{
   label={KK},
   author={Kaveh, Kiumars},
   author={Khovanskii, Askold},
   title={Newton--Okounkov bodies, semigroups of integral points, graded algebras and intersection theory},
   journal={Annals of Mathematics},
   number={176},
   date={2012},
   pages={925--978},
}

\bib{K}{book}{
   label={K},
   author={Koll\'ar, J\'anos},
   title={Shafarevich maps and automorphic forms},
   series={Bulletin of the American Mathematical Society},
   volume={33},
   publisher={Princeton Univ. Press},
   place={Prionceton, NJ},
   date={1995},
   pages={+199},
}

\bib{KLM}{article}{
   label={KLM},
   author={K{\"u}ronya, Alex},
   author={Lozovanu, Victor},
   author={Maclean, Catriona},
   title={Convex bodies appearing as Okounkov bodies of divisors},
   journal={Adv. Math.},
   volume={229},
   date={2012},
   number={5},
   pages={2622--2639},
}

\bib{LM}{article}{
   label={LM},
   author={Lazarsfeld, Robert},
   author={Musta{\c{t}}{\u{a}}, Mircea},
   title={Convex bodies associated to linear series},
   language={English, with English and French summaries},
   journal={Ann. Sci. \'Ec. Norm. Sup\'er. (4)},
   volume={42},
   date={2009},
   number={5},
   pages={783--835},
}

\bib{LSS}{article}{
   label={LSS},
   author={Patrycja \L uszcz-\'Swidecka}, 
   author={David Schmitz},
   title={Minkowski decomposition of Okounkov bodies on surfaces},
    JOURNAL = {J. Algebra},
    VOLUME = {414},
      YEAR = {2014},
     PAGES = {159--174},
}

\bib{M}{article}{
   label={M},
   author={Mumford, D.},
   title={An algebraic surface with $K$ ample, $(K^{2})=9$, $p_{g}=q=0$},
   journal={Amer. J. Math.},
   volume={101},
   date={1979},
   number={1},
   pages={233--244},
}

\bib{MR}{article}{
   label={MR},
   author={McKinnon, David}, 
   author={Roth, Mike},
   title={Seshadri constants, Diophantine approximation, and Roth's theoreym for arbitrary varieties},
   note={arXiv:1306:2976},
}

\bib{Nak1}{article}{
   label={N1},
   author={Nakamaye, Michael},
   title={Base loci of linear series are numerically determined},
   journal={Trans. Amer. Math. Soc.},
   volume={355},
   date={2003},
   number={2},
   pages={551--566 (electronic)},
}

\bib{Nak2}{article}{
   label={N2},
   author={Nakamaye, Michael},
   title={Seshadri constants at very general points},
   journal={Trans. Amer. Math. Soc.},
   volume={357},
   date={2005},
   number={8},
   pages={3285--3297 (electronic)},
}

\bib{PAG}{book}{
  label={PAG},
  author={Lazarsfeld, Robert},
  title={Positivity in algebraic geometry. I--II},
  series={Ergebnisse der Mathematik und ihrer Grenzgebiete. 3. Folge.},
  volume={48--49},
  publisher={Springer-Verlag},
  place={Berlin},
  date={2004},
}

\bib{Ok}{article}{
   author={Okounkov, Andrei},
   title={Brunn-Minkowski inequality for multiplicities},
   journal={Invent. Math.},
   volume={125},
   date={1996},
   number={3},
   pages={405--411},
}

\bib{Ott}{article}{
   label={Ott},
   author={Ottem, John Christian},
   title={Birational geometry of hypersurfaces in products of projective spaces},
   note={arXiv:1305:0537},
}

\bib{Z}{article}{
   label={Z},
   author={Zariski, Oscar},
   title={The theorem of Riemann--Roch for high multiples of an effective divisor on an algebraic surface},
   journal={Annals of Mathematics},
   volume={56},
   date={1962},
   number={2},
   pages={560--615},
}

\end{biblist}
\end{bibdiv}

\raggedright

\end{document}